\documentclass{article}
\setlength{\oddsidemargin}{-0.4cm}
\setlength{\topmargin}{-1cm}
\textheight 22.4cm
\textwidth 17cm
\usepackage{indentfirst}
\usepackage{color}
\usepackage{amsthm}
\usepackage{amsmath}
\usepackage{amssymb}
 \usepackage{amsfonts}
\usepackage{latexsym}
 \def\qed{\hfill $\Box$}
\title{Jump processes on the boundaries of random trees}
\author{Yuki Tokushige}

\begin{document}
\newtheorem{Definition}{Definition}[section]
\newtheorem{Proposition}[Definition]{Proposition}
\newtheorem{DefAndProp}[Definition]{DefAndProp}
\newtheorem{Theorem}[Definition]{Theorem}
\newtheorem{Assumption}[Definition]{Assumption}
\newtheorem{Lemma}[Definition]{Lemma}
\newtheorem{Remark}[Definition]{Remark}
\newtheorem{Example}[Definition]{Example}
\newtheorem{Corollary}[Definition]{Corollary}
\newtheorem{Problem}[Definition]{Problem}
\newtheorem{Recall}[Definition]{Recall}
\newtheorem{myremark}[Definition]{myremark}
\newtheorem{Notation}[Definition]{Notation}
\newtheorem{Proof}[Definition]{Proof}
\newtheorem{Application}[Definition]{Application}

\newtheorem*{notat}{Notation}
\renewcommand{\thefootnote}{\fnsymbol{footnote}}

\makeatletter
    \renewcommand{\theequation}{%
    \thesection.\arabic{equation}}
    \@addtoreset{equation}{section}
  \makeatother

\maketitle
\begin{abstract}
In \cite{K}, Kigami showed that a transient random walk on a deterministic infinite tree $T$
 induces its trace process on the Martin boundary of $T$. 
 In this paper, we will deal with trace processes on Martin boundaries of random trees instead of deterministic ones,
  and prove short time log-asymptotic of heat kernel estimates and estimates of mean displacements.
\end{abstract}

\section{Introduction}
\footnote[0]{2010 {\it Mathematics Subject Classification}.  60J25, 60J50.}
\footnote[0]{{\it Key words and phrases}. Galton-Watson tree, Dirichlet form, Martin boundary.}
 Consider an infinite tree $T$ and a transient random walk $\{Z_n\}_{n\geq0}$ on $T$.
 The transient random walk on $T$ finally hits its Martin boundary, which is the collection of ``infinities''.
 It is well-known that under suitable assumptions, the transient random walk on $T$ induces a Hunt process
 (equivalently a {\it Dirichlet form}) on its Martin boundary $M$ in the following way:
 let $(\mathcal{E},\mathcal{F})$ be the Dirichlet form associated with $\{Z_n\}_{n\geq0}$
 and ${\rm HARM}_T$ be its first hitting distribution
 (called the {\it harmonic measure}) to $M$ started from a certain point in $T$.
 By the theory of Martin boundaries, we have the map $H$ which transforms functions on $M$ into functions
 on $T$ in such a way that for a given function $f$ on $M$, $Hf$ is harmonic on $T$ and has the boundary value $f$ on $M$.
 Then the induced form $(\mathcal{E}_M,\mathcal{F}_M)$ on the Martin boundary $M$ is given by 
\begin{align*}
 \mathcal{F}_M&:=\{f\in L^2(M,{\rm HARM}_T): Hf\in\mathcal{F}\},\\
 \mathcal{E}_M(f,g)&:=\mathcal{E}(Hf,Hg)\ {\rm for}\ f,g\in\mathcal{F}_M.
  \end{align*} 
Since $Hf$ solves the Dirichlet problem at ``infinity'', $(\mathcal{E}_M,\mathcal{F}_M)$ can be regarded as
 the trace of $(\mathcal{E},\mathcal{F})$ on $M$. In \cite{K}, Kigami constructed a Hunt process
 $\{X_t\}_{t>0}$ on $M$ associated with $(\mathcal{E}_M,\mathcal{F}_M)$ and obtained estimates
 of its heat kernels $p_t(\cdot,\cdot)$ for a deterministic tree.
 In particular, detailed two sided heat kernel estimates are obtained when ${\rm HARM}_T$ has the volume doubling property
 with respect to the {\it intrinsic metric} $D$ on $M$, which will be defined in Definition \ref{intrinsic}.
 We refer to \cite{B} for the history and related topics.\par
 We now give a classical example to which the above construction of jump processes is analogous.
 Consider the reflected Brownian motion on the unit disc
 $\mathbb{D}:=\{(x,y)\in\mathbb{R}^2\ ;\ x^2+y^2<1\}$. Note that the corresponding Dirichlet form
  $(E,F)$ is given by
  \begin{align*}
  E(u,v)&:=
  \int_{\mathbb{D}}\left(\frac{\partial u}{\partial x}\frac{\partial v}{\partial x} +\frac{\partial u}{\partial y}
  \frac{\partial v}{\partial y}\right)dxdy,\\
  F&:=\left\{u\in L^2(\mathbb{D})\ ;\  \frac{\partial u}{\partial x},\frac{\partial u}{\partial y}\in L^2(\mathbb{D})\right\}.
  \end{align*}
  Let $H$ be an operator of taking the {\it Poisson integral} of a given function 
  $\varphi:\partial\mathbb{D}\rightarrow\mathbb{R}$, which
  is defined as follows:
  \begin{align*}
  H\varphi(re^{i\theta}):=\int_0^{2\pi}\frac{1-r^2}{1-2r\cos(\theta-\theta')+r^2}\varphi(\theta')d\nu(\theta'),
  \end{align*}
  where $\nu$ is the normalized uniform measure on $\partial\mathbb{D}$. Note that the probability measure $\nu$
 coincides with the hitting distribution on $\partial\mathbb{D}$ of the Brownian motion starting at $0$ due to its rotation invariance. Now we define a quadratic form $(E_{\partial\mathbb{D}},F_{\partial\mathbb{D}})$ by
 \begin{align*}
 E_{\partial\mathbb{D}}(\varphi,\psi)&:=E(H\varphi,H\psi),\\
 F_{\partial\mathbb{D}}&:=\{\varphi\in L^2(\partial\mathbb{D},\nu)\ ;\ H\varphi\in F\}.
 \end{align*}
 It is well-known that $(E_{\partial\mathbb{D}},F_{\partial\mathbb{D}})$ yields a regular Dirichlet form
 on $L^2(\partial\mathbb{D},\nu)$, and it corresponds to the trace process of the reflecting Brownian motion
 on $\partial\mathbb{D}$. We remark that $E_{\partial \mathbb{D}}$ has the following explicit expression known as
 the {\it Douglas integral}:
 \begin{align*}
 E_{\partial \mathbb{D}}(\varphi,\psi)=\frac{\pi}{4}\int_0^{2\pi}\int_0^{2\pi}
 \frac{(\varphi(\theta)-\varphi(\theta'))(\psi(\theta)-\psi(\theta'))}{\sin^2(\frac{\theta-\theta'}{2})}d\nu(\theta)d\nu(\theta').
 \end{align*}
 In the context of potential theory on Euclidean domains, 
 the general analogue of the Douglas integral was obtained 
 in \cite{D},
 where the kernel $1/\sin^2(\frac{\theta-\theta'}{2})$ of $E_{\partial\mathbb{D}}$ is replaced by the
 {\it Naim kernel} which was introduced in \cite{N}.
 Later, in the setting of the Martin boundary of reversible Markov chains on discrete
 graphs, Silverstein (\cite{S}) studied a similar problem. See the references introduced above for details.
 \par
In this paper, we will consider random trees instead of deterministic ones,   
 and we are going to study properties of processes on the Martin boundary
 induced by transient random walks on random trees. 
  In particular, we are interested in random trees generated by branching processes.
 In \cite{K}, it is assumed to analyze properties of processes on the boundary that
 the harmonic measure satisfies the volume doubling property with respect to the intrinsic metric $D$.
 But for random trees, the volume doubling property of the harmonic measure does not hold in general. 
 We overcome this difficulty by utilizing the ergodic theory
 on the space of trees developed in \cite{LPP1} and \cite{LPP2}. \par 
 We now explain the framework more precisely.
 Consider a Galton-Watson branching process with offspring distribution $\{p_k\}_{k\geq0}$.
 Starting from a single individual called the {\it root}, which is denoted by $o$,
 this process yields a random tree $\mathcal{T}$,
 which is called a Galton-Watson tree with offspring distribution $\{p_k\}_{k\geq0}$.
 In this paper, we assume that $p_0=0$ and $\mathcal{T}$
 is supercritical (namely $m:=\sum_{k\geq0}kp_k>1$) to guarantee that $\mathcal{T}$ is almost surely infinite. 
 Under the above assumptions, the Galton-Watson
 tree $\mathcal{T}$ can be regarded as the $\mathbb{T}$-valued random variable,
 where $\mathbb{T}:=\{T\ ;\ T\ {\it\ is\ an\ infinite\ rooted\ tree}\}$, and we will denote the distribution of $\mathcal{T}$
 by $\mathbb{P}_{{\rm GW}}$.
 The structure of $\mathcal{T}$ and its electric network has been studied extensively for many years:
 see \cite{LP} for references and details.
 Given a rooted tree $T$, we consider $a\ \lambda$-{\it biased random walk} on $T$ under the probability measure
 $P_{\lambda}^{T}$. Precisely speaking, for $\lambda>0$,
 we define a Markov chain $\{Z_n^{\lambda}\}_{n\geq0}$
 on the vertices of $T$ such that if $u\neq o$, $u$ has $k$ children $u_1,...,u_k$ and the parent $\pi(u)$, then
\begin{eqnarray*}
P_{\lambda}^{T}(Z_{n+1}^{\lambda}=\pi(u)\ |\ Z_n^{\lambda}=u)&=&\frac{\lambda}{\lambda+k},\\
P_{\lambda}^{T}(Z_{n+1}^{\lambda}=u_i\ |\ Z_n^{\lambda}=u)&=&\frac{1}{\lambda+k},\ \ \ {\rm for}\ 1\le i\le k,
\end{eqnarray*}
and if $u=o$, the random walk moves to its children equally likely. 
 It is proved in \cite{L} that $\{Z_n^{\lambda}\}_{n\geq0}$ on the supercritical Galton-Watson tree $\mathcal{T}$
 is transient for almost every $\mathcal{T}$,
 if and only if $0<\lambda<m$. Thus for $0<\lambda<m$, we have the harmonic
 measure ${\rm HARM}_{\mathcal{T}}^{\lambda}$, the induced Dirichlet form $(\mathcal{E}^{\lambda},\mathcal{F}^{\lambda})$
 $\mathbb{P}_{{\rm GW}}$-a.s.
 Moreover,
 we get the heat kernels $p_t^{\lambda}(\cdot,\cdot)$ and
 the Hunt process $\{X_t^{\lambda}\}$ on the boundary of $\mathcal{T}$ $\mathbb{P}_{{\rm GW}}$-almost surely,
 which are associated with $(\mathcal{E}^{\lambda},\mathcal{F}^{\lambda})$.
  In \cite{LPP1} and \cite{LPP2}, Lyons, Pemantle and Peres showed that for $0<\lambda<m$,
 $\beta_{\lambda}:=\dim{\rm HARM}_{\mathcal{T}}^{\lambda}$ is a deterministic constant for almost every $\mathcal{T}$,
 see Theorem \ref{HM}.\par
 We now state the results on short time log-asymptotic of heat kernel estimates and estimates
 of mean displacements.
 Note that $d(\cdot,\cdot)$ is the natural metric on $M$ defined in Definition \ref{def} and that
 we will prove in Corollary \ref{>} that $\beta_{\lambda}-\log\lambda>0$ for $0<\lambda<m$.
\begin{Theorem}\label{OND}For $0<\lambda<m$, the following results hold $\mathbb{P}_{{\rm GW}}\mathchar`-a.s.$
\begin{align}
\lim_{t \to 0}\frac{\log p_t^{\lambda}(\omega,\eta)}{\log t}&=1,\ \ \ \ {\rm for\ any}\ \omega,\eta\in M\ 
{\rm with}\ \omega\neq\eta,\label{A}\\
-\lim_{t \to 0}\frac{\log p_t^{\lambda}(\omega,\omega)}{\log t}&= \frac{\beta_\lambda}{\beta_\lambda - \log\lambda},\ \ \ \  {\rm HARM}_{\mathcal{T}}^{\lambda}\ a.e.\mathchar`-\omega.\label{AA}
\end{align}
\end{Theorem}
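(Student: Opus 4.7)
\textit{Strategy.} The plan is to treat the two limits separately. Statement \eqref{A} is a jump-process feature at a fixed off-diagonal pair and can be handled without any ergodic input, whereas \eqref{AA} is a spectral-dimension assertion at typical boundary points and is the place where the ergodic theory on the space of Galton--Watson trees developed in \cite{LPP1}, \cite{LPP2} becomes essential.

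\textit{Off-diagonal part.} Because $(\mathcal{E}^\lambda,\mathcal{F}^\lambda)$ is of pure-jump type, it carries a jumping kernel $J^\lambda(\omega,\eta)$, the Na\"im kernel of the $\lambda$-biased walk on $\mathcal{T}$. I would first derive an explicit formula for $J^\lambda(\omega,\eta)$ in terms of the harmonic measures of the subtrees rooted at the most recent common ancestor $\rho=\omega\wedge\eta$, together with the Green's function of $\{Z_n^\lambda\}$ at $\rho$. For $\omega\neq\eta$ the vertex $\rho$ lies at a finite level and $\mathcal{T}$ is almost surely locally finite, so $J^\lambda(\omega,\eta)\in(0,\infty)$. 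A standard Meyer decomposition (or L\'evy system argument) for $X^\lambda$ then yields
\[p_t^\lambda(\omega,\eta)=tJ^\lambda(\omega,\eta)+O(t^2)\quad(t\downarrow 0),\]
from which \eqref{A} is immediate upon taking $\log$.

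\textit{On-diagonal part.} For \eqref{AA} the idea is to match, in log-weakened form, the Nash-type comparison
\[p_t^\lambda(\omega,\omega)\asymp \mathrm{HARM}^\lambda_\mathcal{T}\bigl(B_d(\omega,r(t))\bigr)^{-1},\]
with $r(t)$ determined implicitly by $t\asymp E_\omega\bigl[\tau_{B_d(\omega,r(t))}\bigr]$. Two pointwise asymptotics are needed, both holding for HARM$^\lambda_\mathcal{T}$-a.e.\ $\omega$:
\begin{equation*}
\lim_{r\downarrow 0}\frac{\log\mathrm{HARM}^\lambda_\mathcal{T}\bigl(B_d(\omega,r)\bigr)}{\log r}=\beta_\lambda,\qquad
\lim_{r\downarrow 0}\frac{\log E_\omega\bigl[\tau_{B_d(\omega,r)}\bigr]}{\log r}=\beta_\lambda-\log\lambda.
\end{equation*}
The first is essentially the dimension theorem recalled as Theorem \ref{HM}; the second encodes the walk dimension of $X^\lambda$ in the metric $d$, with the $-\log\lambda$ term arising from the scaling of excursion times of the $\lambda$-biased walk below level $-\log r$. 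Both should follow from an ergodic theorem along a HARM-typical ray applied to the stationary measure on rooted marked trees constructed in \cite{LPP1}, \cite{LPP2}. Solving $t\asymp r(t)^{\beta_\lambda-\log\lambda}$ gives $r(t)\asymp t^{1/(\beta_\lambda-\log\lambda)}$, and substitution yields the announced exponent $\beta_\lambda/(\beta_\lambda-\log\lambda)$.

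\textit{Main obstacle.} The delicate point is justifying the volume/heat-kernel comparison above without the volume doubling property, which generically fails for HARM$^\lambda_\mathcal{T}$: Kigami's deterministic two-sided bounds from \cite{K} therefore do not apply verbatim. I expect to establish only the log-asymptotic form of the comparison, via a Nash/Faber--Krahn upper bound adapted to the random volume profile and a matching lower bound coming from the standard argument that $X^\lambda$ stays in $B_d(\omega,r)$ until a time of order $E_\omega[\tau_{B_d(\omega,r)}]$ with non-negligible probability. The critical feature is that these bounds should be robust enough that the exceptional non-doubling points are absorbed into the HARM-null set which the ergodic theorem is allowed to discard.
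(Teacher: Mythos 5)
Your skeleton is right in spirit --- the off-diagonal kernel is linear in $t$, and the on-diagonal exponent comes from matching a volume exponent $\beta_\lambda$ against a time-to-space exponent $\beta_\lambda-\log\lambda$ along a HARM-typical ray --- but the mechanism you propose for the on-diagonal upper bound has a genuine gap. A Nash/Faber--Krahn argument produces bounds on $\sup_\omega p_t(\omega,\omega)$, i.e.\ it is governed by the \emph{worst-case} volume decay over all of $\Sigma$, and without volume doubling there is no standard way to localize it to a single HARM-typical point $\omega$ so as to recover the \emph{typical} exponent $\beta_\lambda/(\beta_\lambda-\log\lambda)$; the atypical rays (which here are unavoidable, since {\bf (EL)} fails whenever $p_1>0$ or the offspring is unbounded) cannot simply be ``absorbed into a null set'' in a Nash-type argument. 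The paper avoids this entirely by using the exact eigenfunction expansion (\ref{expression}): writing $H_n=\mathrm{HARM}(\Sigma([\omega]_n))$, $D_n=D([\omega]_n)=H_nR^{\lambda}([\omega]_n)$ and choosing $l$ with $D_l<t\le D_{l-1}$, one splits the series for $p_t(\omega,\omega)$ at $n=l$; the head telescopes to $1/H_l$, and the tail is summable with a subpolynomial error precisely because $H_n\approx e^{-\beta_\lambda n}$ (Theorem \ref{HM}) and $R^{\lambda}([\omega]_n)\approx \lambda^{n}$ (Proposition \ref{Resistance}) force $D_l/D_n$ to grow geometrically in $n-l$. The matching lower bound $p_t(\omega,\omega)\ge e^{-1}/\mathrm{HARM}(B_D(\omega,t))$ is already available without doubling (Proposition \ref{HK}). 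Note also that your second ``needed asymptotic'' (the exit-time exponent) is, in the paper, exactly the resistance asymptotic $\frac1n\log R^{\lambda}([\omega]_n)\to\log\lambda$, proved by Birkhoff's theorem for the $\mathrm{HARM}^{\lambda}$-stationary measure together with nontrivial moment/tail estimates on $R^{\lambda}(o)$ (Lemma \ref{rrr}) to secure integrability; your proposal does not identify either the resistance as the additive functional nor the integrability issue, and these are real pieces of work, not corollaries of \cite{LPP1,LPP2}.

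On the off-diagonal part, your conclusion is correct but the route is under-justified: a L\'evy system identifies the compensator of the jump measure and yields statements about expected numbers of jumps, not a pointwise expansion $p_t^{\lambda}(\omega,\eta)=tJ^{\lambda}(\omega,\eta)+O(t^2)$ of the transition \emph{density}. In this model the expansion does hold, but only because of the explicit finite sum (\ref{expression}) over the $N(\omega,\eta)+1$ common ancestors of $\omega$ and $\eta$. The paper's argument is even more elementary and needs no ergodic or kernel input at all: the upper bound $p_t(\omega,\eta)\le t/\bigl(D(\omega,\eta)\mathrm{HARM}(\Sigma([\omega,\eta]))\bigr)$ is Proposition \ref{HK}(2), and keeping only the $n=0$ term of (\ref{expression}) gives $p_t(\omega,\eta)\ge 1-\exp(-t/D([\omega]_0))\gtrsim t$, which already forces the logarithmic limit to equal $1$ for \emph{every} pair $\omega\neq\eta$ (as the theorem asserts --- your a.e.\ formulation would be weaker than claimed).
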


\begin{Theorem}\label{displacement}For $0<\lambda<m$ and $\gamma>0$,
 the following holds $\mathbb{P}_{{\rm GW}}\mathchar`-a.s.$
\begin{eqnarray*}
\lim_{t \to 0} \frac{\log{E^{\lambda}_{\omega}[d(\omega,X_t^{\lambda})^{\gamma}]}}{\log t}
=\left(\frac{\gamma}{\beta_{\lambda}-
\log\lambda }\right)\wedge1,\ \ \ \ {\rm HARM}_{\mathcal{T}}^{\lambda}\ a.e.\mathchar`-\omega,
\end{eqnarray*}
where $E^{\lambda}_{\omega}$ denotes the (quenched) expectation with respect to the probability distribution
 of $\{X_t^{\lambda}\}$ starting at $\omega$.
\end{Theorem}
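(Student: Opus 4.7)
The plan is to use the stable-like heat kernel structure underlying Theorem~\ref{OND}, combined with the log-asymptotic volume growth $\mathrm{HARM}_{\mathcal{T}}^{\lambda}(B_d(\omega, r)) = r^{\beta_\lambda + o(1)}$ as $r\to 0$, which holds at $\mathrm{HARM}_{\mathcal{T}}^{\lambda}$-a.e.\ $\omega$ and $\mathbb{P}_{\mathrm{GW}}$-a.s.\ as a form of the Lyons--Pemantle--Peres dimension theorem. Set $d_w := \beta_\lambda - \log\lambda$, positive by Corollary~\ref{>}, and $r(t) := t^{1/d_w}$. The natural space-time scaling is then $r \asymp t^{1/d_w}$, which matches both \eqref{AA} and the volume growth.

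For the upper bound, write $E^{\lambda}_{\omega}[d(\omega, X_t^{\lambda})^{\gamma}] = \int_M d(\omega,\eta)^\gamma p_t^{\lambda}(\omega,\eta)\, d\mathrm{HARM}_{\mathcal{T}}^{\lambda}(\eta)$ and split the integral at $d(\omega,\eta) = r(t)$. On the near part, combining the on-diagonal bound from \eqref{AA} with the volume growth yields a contribution of $t^{\gamma/d_w+o(1)}$. For the far part, apply a log-asymptotic stable-like upper bound of the form $p_t^{\lambda}(\omega,\eta) \leq t^{1+o(1)}/d(\omega,\eta)^{\beta_\lambda + d_w + o(1)}$, which I would establish as a refinement of \eqref{A} at $\mathrm{HARM}_{\mathcal{T}}^{\lambda}$-a.e.\ $\omega$ using exit-time bounds for the $\lambda$-biased walk (via the ergodic theorem for the flow of subtrees seen by the walk from \cite{LPP1}). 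Integrating this bound against HARM on dyadic annuli of radii in $(r(t), \mathrm{diam}\, M)$ gives $t^{\gamma/d_w + o(1)}$ when $\gamma < d_w$ and $t^{1+o(1)}$ when $\gamma \geq d_w$, combining to $t^{(\gamma/d_w)\wedge 1 + o(1)}$.

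For the matching lower bound, use $E^{\lambda}_{\omega}[d^\gamma] \geq r^\gamma\, P^{\lambda}_{\omega}(d(\omega, X_t^{\lambda}) > r)$. When $\gamma < d_w$, take $r$ slightly below $r(t)$: the matching on-diagonal lower bound underlying \eqref{AA}, combined with the volume estimate, yields $P^{\lambda}_{\omega}(d(\omega, X_t^\lambda) > r) \geq c > 0$ for small $t$, hence $E^{\lambda}_\omega[d^\gamma] \geq t^{\gamma/d_w + o(1)}$. When $\gamma \geq d_w$, take $r$ of order $1$ and integrate the pointwise off-diagonal lower bound underlying \eqref{A} over an annulus of positive HARM-measure to obtain $P^{\lambda}_{\omega}(d(\omega, X_t^\lambda) > r) \geq t^{1+o(1)}$, and hence $E^\lambda_\omega[d^\gamma] \geq t^{1+o(1)}$.

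The main obstacle is upgrading \eqref{A} to an off-diagonal upper bound with an $o(1)$-error that is uniform in $\eta$ across the range of dyadic scales between $r(t)$ and $\mathrm{diam}\, M$; the pointwise log-asymptotic alone cannot be integrated in $\eta$. This uniformization, and the analogous one for the volume growth of $d$-balls, is exactly what the ergodic theory of \cite{LPP1}, \cite{LPP2} applied to the flow of subtrees seen by the $\lambda$-biased walk is designed to supply, and it is the step where the random-tree setting requires genuinely new input beyond the deterministic framework of \cite{K}.
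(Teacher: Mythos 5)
Your strategy is sound and arrives at the right exponents, but it is organized quite differently from the paper. The paper never decomposes in the metric $d$ directly: it first proves the mean-displacement asymptotic for the intrinsic metric $D$ (Proposition \ref{Dis}), where the natural space--time scaling is $D\asymp t$ and the limit is $\gamma\wedge 1$, and then transfers to $d$ via the two-sided comparison $d(\omega,\eta)^{\beta_{\lambda}-\log\lambda+2\varepsilon}\le D(\omega,\eta)\le d(\omega,\eta)^{\beta_{\lambda}-\log\lambda-2\varepsilon}$, which is immediate from $D([\omega]_n)=H_nR_n$ together with Theorem \ref{HM} and Proposition \ref{Resistance}. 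Your direct $d$-annulus decomposition at scale $r(t)=t^{1/d_w}$ with $d_w=\beta_{\lambda}-\log\lambda$ is the same computation after this change of variables. Two remarks on your individual steps. First, the ``stable-like'' far-field upper bound you assert does not require exit-time estimates for the walk: it is precisely Proposition \ref{HK}(2), $p_t(\omega,\eta)\le t/\bigl(D(\omega,\eta)\,\mathrm{HARM}(\Sigma([\omega,\eta]))\bigr)$, rewritten using $D(\omega,\eta)=d(\omega,\eta)^{d_w+o(1)}$ and $\mathrm{HARM}(\Sigma([\omega,\eta]))=d(\omega,\eta)^{\beta_{\lambda}+o(1)}$. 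Second, the uniformity-in-$\eta$ issue you single out as the main obstacle largely dissolves: $p_t(\omega,\eta)$ depends on $\eta$ only through $N(\omega,\eta)$, i.e.\ through the vertex $[\omega,\eta]=[\omega]_{N(\omega,\eta)}$ lying on the fixed ray $\omega$, so the ergodic estimates for $H_n$ and $R_n$ along that single ray (valid for all $n$ beyond a random threshold, with the finitely many remaining annuli bounded crudely) already give the annulus-wise bounds uniformly in $\eta$; no new uniformization is needed beyond what Theorem \ref{HM} and Proposition \ref{Resistance} supply. As for what each route buys: your lower bound for $\gamma<d_w$ via $E^{\lambda}_{\omega}[d(\omega,X_t^{\lambda})^{\gamma}]\ge r^{\gamma}P^{\lambda}_{\omega}(d(\omega,X_t^{\lambda})>r)$, conservativeness, and the on-diagonal upper bound is genuinely simpler than the paper's near-diagonal lower bound (\ref{BBBB}), which is the most technical part of the proof of Proposition \ref{Dis}; on the other hand, the paper's detour through $D$ isolates a clean statement that is the exact analogue of Theorem \ref{HKE}(2) without volume doubling and is of independent interest.
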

 
 Since the volume doubling property of the harmonic measure, which is assumed to analyze
 the heat kernels in \cite{K}, holds only when $p_1=0$ and $\sup\{n:p_n>0\}<\infty$, the heat kernel estimates proved in
 \cite{K} cannot be applied for this problem in general.
 The proofs of Theorem \ref{OND} and Theorem \ref{displacement} both utilize the explicit expression (\ref{expression})
 for the heat kernels
 obtained in \cite{K} which involves the harmonic measure and the effective resistance.
 Therefore, we will obtain an estimate of the effective resistance 
 for a Galton-Watson tree (Proposition \ref{Resistance})
 by using the ergodic theory on the space of trees, and apply it for the expression of the heat kernels 
 together with an estimate of the harmonic measure
 of a Galton-Watson tree (Theorem \ref{HM}) obtained in \cite{LPP1}, \cite{LPP2}.
 In order to prove Theorem \ref{displacement}, we will establish an analogous result on mean displacement with respect to
 $D$ (Proposition \ref{Dis}), and use a comparison between two metrics $d$ and $D$.\par
 Note that Theorem \ref{OND} and Theorem \ref{displacement} imply that the spectral dimension (resp. the walk dimension) is
 $2\beta_\lambda/(\beta_\lambda - \log\lambda)$ (resp. $(\beta_{\lambda}-\log\lambda)\vee1$).
 We remark here that long time asymptotics of heat kernels $p_t^{\lambda}(\cdot,\cdot)$ are trivial
 because of compactness of the boundary.\par
 This paper is organized as follows. In Section 2, we will introduce notation and results in \cite{K}.
 In Section 3, we will introduce notation and results on Galton-Watson trees
 studied in \cite{LPP1}, \cite{LPP2} and \cite{Lin}, and prove the asymptotic of the effective resistance along infinite rays,
 which will be important for the proof of the main results.
 We then prove the lower bound for the dimension of the harmonic measures,
 which is of independent interest.
 In Section 4, we will give the proofs of our main results.\\
 
{\bf Acknowledgments.} The author would like to thank Professor Takashi Kumagai for detailed discussions
 and careful readings of earlier versions of this paper, Professor Ryoki Fukushima 
 for the literature information about the random walks in random environment.
 Special thanks go to Shen Lin for informing the author that his results in \cite{Lin} can simplify the argument
 in Section 3 of the first version of this paper. This research is partially supported by JSPS KAKENHI 16J02351.
 
\section{Preliminaries and Kigami's results}
 In this section, we will introduce some notation and the results studied in \cite{K}.
\subsection{Weighted graphs and associated random walks}
\begin{Definition}
(1) A pair $(V,C)$ is called a weighted graph (an electric network) if $V$ is a countable set and
 $C:V\times V\rightarrow[0,\infty)$ satisfies $C(x,y)=C(y,x)$ for any $x,y\in V$ and $C(x,x)=0$ for any $x\in V.$
 In what follows, we always assume that $C(x):=\sum_{y \in V}C(x,y)>0$ for any $x\in V$.
 The points of $V$ are called
 the vertices of the graph $(V,C).$ Two vertices $x,y\in V$ are said to be adjacent if and only if $C(x,y)>0.$\\
(2) A weighted graph $(V,C)$ is called connected if and only if for any $x,y \in V,$ there exists a sequence  of vertices of $V\ x=x_0,x_1,x_2,\cdot \cdot \cdot, x_n=y$ such that $x_k$ and $x_{k+1}$ are adjacent for $0\le k \le n-1.$\\
(3) A weighted graph $(V,C)$ is called locally finite if and only if $\#\{ y \in V: {\it y\ is\ adjacent\ to\ x}\}<+ \infty$
 for all $x \in V$.
\end{Definition}

In this paper, we always assume that the weighted graph $(V,C)$ is connected and locally finite.
 A weighted graph defines a reversible Markov chain on $V$ in the following way.

\begin{Definition} Define $p(x,y):=\frac{C(x,y)}{C(x)}.$ For $n\geq 0,$
 we define $p^{(n)}(x,y)$ for $x,y \in V$ inductively by $p^{(0)}(x,y):={\bf 1}_x(y)$ and 
\begin{equation*}
p^{(n+1)}(x,y):=\sum_{z \in V}p^{(n)}(x,z)p(z,y).
\end{equation*}
Define $G(x,y):=\sum_{n=0}^{\infty}p^{(n)}(x,y)\in[0,\infty]$. $G(x,y)$ is called the Green function of $(V,C).$ A weighted graph $(V,C)$ is said to be transient if  and only if $G(x,y)<+\infty$ for any $x,y \in V.$
\end{Definition}

Let $(\{ Z_n\}_{n\geq 0},\{P_x\}_{x \in V})$ be the random walk on $V$ associated with $(V,C),$
 \ that is $P_x(Z_n=y)=p^{(n)}(x,y)$.

\begin{Definition}
(1) Define $l(V)$ to be the set of all $\mathbb{R}$-valued functions on $V$.
The Laplacian $\Delta:l(V)\rightarrow l(V)$ associated with $(V,C)$ is defined by
\begin{align*}
\Delta u(x):=\sum_{y \in V}p(x,y)(u(y)-u(x)),
\end{align*}
 for any $u \in l(V)$.
 A function $u \in l(V)$ is said to be harmonic on $V$ with respect to $(V,C)$ if and only if $\Delta u(x)=0$ for any $x\in V. $ Define
\begin{align*}
\mathcal{H}(V,C)&:=\{u \in l(V):{\it u\ is\ harmonic\ on\ V}\},\\
\mathcal{H}^{\infty}(V,C)&:=\{u \in \mathcal{H}(V,C):{\it u\ is\ bounded}\}.
\end{align*}
(2) Define $\mathcal{F}:=\{u \in l(V):\sum_{x,y \in V}C(x,y)(u(x)-u(y))^2<+\infty \}.$ For any $u,v \in \mathcal{F},$ define\
\begin{equation}\mathcal{E}(u,v):=\frac{1}{2}\sum_{x,y \in V}C(x,y)(u(x)-u(y))(v(x)-v(y)).\nonumber\end{equation}
The bilinear form $(\mathcal{E},\mathcal{F})$ is called the resistance form associated to $(V,C)$.
\end{Definition}

In the rest of the section, we introduce the notion and  fundamental results of Martin boundaries
 of transient weighted graphs. See \cite[Chapter 7]{W} for references and details.

\begin{Definition}
 Assume $(V,C)$ is transient. The Martin kernel of $(V,C)$ is $K_z(x,y):=G(x,y)/G(z,y)$ for $x,y,z \in V.$ 
\end{Definition}

\begin{Proposition}\cite[Theorem 9.18.]{W}\ Assume $(V,C)$ is transient. Then there exists a unique minimal compactification $\tilde{V}$ of $V\ ({\it up\ to\ homeomorphism})$ such that $K_z$ extends
 to a continuous function from $V\times \tilde{V}\ to\ \mathbb{R}.$ 
 $\tilde{V}$ is independent of the choice of $z.$
 $\tilde{V}$ is called the Martin compactification of $V.$
 Moreover, there exists a $\tilde{V}\backslash V$-valued\ random\ variable\ $Z_{\infty}$
 such that $P_x(\lim_{n \to \infty}Z_n=Z_{\infty})=1$ for\ any\ $x\in V$.
\end{Proposition}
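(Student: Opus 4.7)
The plan is to realize $\tilde V$ concretely as the closure of $V$ inside a compact product space constructed from the Martin kernels, and then prove the almost-sure convergence of $(Z_n)$ to a boundary point by a martingale argument combined with transience.

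First I would show that for each fixed $x\in V$ the function $y\mapsto K_z(x,y)$ is uniformly bounded on $V$ by some $M(x,z)<\infty$. This follows from a Harnack-style path argument: since $(V,C)$ is connected and locally finite one has $p^{(n)}(z,x)\ge c>0$ for some $n$, and decomposing visits to $y$ of a walk from $z$ via the first passage through $x$ gives $G(z,y)\ge c\,G(x,y)$, hence $K_z(x,y)\le c^{-1}$. I would then embed $V$ into the compact product $\Pi:=\prod_{x\in V}[0,M(x,z)]$ via $\iota(y):=(K_z(x,y))_{x\in V}$ and define $\tilde V$ to be the closure of $\iota(V)$ in $\Pi$. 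By construction every $K_z(x,\cdot)$ extends continuously to $\tilde V$. Minimality comes from a universal-property argument: any other compactification of $V$ to which all $K_z(x,\cdot)$ extend continuously admits a unique continuous surjection onto $\tilde V$, because boundary points that cannot be separated by the Martin kernels must be identified. Independence of the basepoint follows from the identity $K_{z'}(x,\cdot)=K_z(x,\cdot)/K_z(z',\cdot)$, which shows that different basepoints generate the same topology on the boundary.

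For the random-variable statement I would use a martingale-convergence argument exploiting reversibility. For each fixed $x\in V$ the Green function $G(x,\cdot)$ is harmonic on $V\setminus\{x\}$, so after a localisation that removes the finitely many visits to $x$ the sequence $K_z(x,Z_n)$ is a non-negative supermartingale; by Doob's theorem it converges $P_y$-a.s.\ for every starting point $y$. Because $V$ is countable, a diagonal argument over $x\in V$ gives coordinatewise convergence of $\iota(Z_n)$ in $\Pi$, and since $\tilde V$ is closed in $\Pi$ the limit $Z_\infty$ lies in $\tilde V$. Finally, transience forces each vertex of $V$ to be visited only finitely often, so $Z_\infty\in\tilde V\setminus\iota(V)$, and identifying $V$ with $\iota(V)$ yields the claim.

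The step I expect to cause the most trouble is the supermartingale argument: one must handle cleanly the failure of harmonicity of $K_z(x,\cdot)$ at the two distinguished points $x$ and $z$, and then upgrade coordinatewise convergence in $\Pi$ to convergence in the topology of $\tilde V$ itself. The uniqueness of the minimal compactification and independence of basepoint are essentially categorical and standard (see \cite[Chap.~7]{W}); the probabilistic content, and the use of the transience hypothesis, is concentrated in the convergence argument.
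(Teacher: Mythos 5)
The paper offers no proof of this Proposition: it is imported verbatim from \cite[Theorem 9.18]{W}, so your attempt can only be measured against the standard argument in that reference. The topological half of your sketch follows that standard route and is essentially sound: the bound $K_z(x,y)\le 1/p^{(n)}(z,x)$ obtained from $G(z,y)\ge p^{(n)}(z,x)G(x,y)$ is correct, as is the base-point identity $K_{z'}(x,\cdot)=K_z(x,\cdot)/K_z(z',\cdot)$. One caveat: taking the closure of $\iota(V)$ in $\Pi$ using only the kernels need not produce a compactification in which $V$ sits as a discrete open dense subset (the kernels alone need not separate the points of $V$ from each other or from the boundary in the required way); the standard construction adjoins a separating family of finitely supported functions before taking the closure. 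That is a repairable technicality.

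The convergence argument, however, contains a genuine gap, and it sits exactly where you said you expected trouble. The function $y\mapsto K_z(x,y)$ is \emph{not} superharmonic in $y$, even away from $x$ and $z$: harmonicity of the Martin kernel is in its first variable. What reversibility actually gives is that $y\mapsto G(x,y)/C(y)\propto G(y,x)$ is superharmonic off $x$, so $G(x,Z_n)/C(Z_n)$ and $G(z,Z_n)/C(Z_n)$ are (after localisation) non-negative supermartingales; but by transience both converge to $0$ almost surely, since $E_y\bigl[G(Z_n,x)\bigr]=\sum_{k\ge n}p^{(k)}(y,x)\to 0$. Almost sure convergence of numerator and denominator to $0$ says nothing about the ratio $K_z(x,Z_n)=\bigl(C(z)/C(x)\bigr)\,G(Z_n,x)/G(Z_n,z)$, and controlling precisely this ratio is the whole content of the Martin convergence theorem. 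The standard proofs get around it either via the Poisson--Martin integral representation combined with Doob's $h$-processes, or via a downcrossing estimate for ratios of superharmonic functions built from superharmonic minorants of the form $\min(u,c\,v)$; neither ingredient appears in your sketch, and the ``localisation removing visits to $x$'' does not supply it. (For the trees actually used in this paper the convergence is elementary --- transience forces each edge to be crossed finitely often, so the walk converges to an end --- but the Proposition is stated for general transient weighted graphs, where that shortcut is unavailable.)
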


\begin{Definition} Assume $(V,C)$ is transient.\ 
 Define $M(V,C):=\tilde{V}\backslash V$, which is called the Martin boundary of $(V,C).$ Define a probability measure 
${\rm HARM}_{V,x}$ on $M(V,C)$ by
 ${\rm HARM}_{V,x}(B):=P_x(Z_{\infty} \in B)$
for any Borel set $B\subseteq M(V,C)$.\ The probability measure 
${\rm HARM}_{V,x}$ on the Martin boundary $M(V,C)$ is called the harmonic measure of $(Z_n)_{n\geq 0}$ starting from $x$.
 The harmonic measure actually depends on the weight $C$, but we will denote it by ${\rm HARM}_{V,x}$
 for simplicity of notation when the choice of the weight is clear form the context.
\end{Definition}

The following theorem gives the representation of harmonic functions on $(V,C).$
\begin{Theorem}\cite[Theorem 9.37.]{W} Assume that $(V,C)$ is transient.\\
(1) $K_z(\cdot,y) \in \mathcal{H}(V,C)$ for any $z\in V$ and any $y \in \tilde{V}$.\\
(2) If $g \in \mathcal{H}^{\infty}(V,C)$, then there exists $f \in L^{\infty}(M(V,C),{\rm HARM}_{V,z})$ such that
\begin{equation*}
g(x)=\int_{M(V,C)}K_z(x,y)f(y)d{\rm HARM}_{V,z}(y).
\end{equation*}
 Note that the function $f$ does not depend on the choice of $z$ since by connectedness of $(V,C)$, the harmonic measures
 ${\rm HARM}_{V,x}$ and ${\rm HARM}_{V,z}$
 are mutually absolutely continuous for $x,z\in V$, and 
 \begin{align*}
 K_z(x,\cdot)=\frac{d{\rm HARM}_{V,x}}{d{\rm HARM}_{V,z}}(\cdot). 
 \end{align*} 
\end{Theorem}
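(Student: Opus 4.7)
The plan is: for part (1), a direct Green-function computation combined with a limiting argument based on the Martin compactification; for part (2), the Poisson--Martin representation via martingale convergence together with Choquet theory.

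For part (1), the identity
\[
\sum_{w \in V} p(x,w) G(w,y) = \sum_{n \geq 0} p^{(n+1)}(x,y) = G(x,y) - \delta_{xy}
\]
yields $\Delta G(\cdot,y)(x) = -\delta_{xy}$, so $K_z(\cdot, y)$ is harmonic on $V \setminus \{y\}$ whenever $y \in V$. For $y \in M(V,C)$, the Martin compactification is built precisely so that $K_z$ extends continuously; choose $y_n \in V$ with $y_n \to y$ in $\tilde{V}$, and at any fixed $x$ (eventually distinct from $y_n$ and its neighbors) pass to the limit in $\Delta K_z(\cdot, y_n)(x) = 0$, using local finiteness to keep the sum finite. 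This gives $K_z(\cdot,y) \in \mathcal{H}(V,C)$.

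For part (2), given $g \in \mathcal{H}^{\infty}(V,C)$, the process $M_n := g(Z_n)$ is a bounded $P_x$-martingale by harmonicity, so $M_n \to M_\infty$ $P_x$-a.s.\ and in $L^1$. The crucial step is to identify $M_\infty = f(Z_\infty)$ for some $f \in L^\infty(M(V,C), {\rm HARM}_{V,z})$. I would execute this via Choquet theory on the cone of nonnegative harmonic functions: decomposing $g$ into a difference of bounded positive harmonic functions, applying Choquet's theorem to each summand to obtain a representing measure on the minimal Martin boundary, and then verifying that such measures are absolutely continuous with respect to ${\rm HARM}_{V,z}$ (using that ${\rm HARM}_{V,z}$ charges the minimal boundary with full mass). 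Once $f$ is produced, bounded convergence gives $g(x) = E_x[f(Z_\infty)] = \int_M f \, d{\rm HARM}_{V,x}$.

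To bring the Martin kernel into the formula, note that $x \mapsto \int_M K_z(x,\omega) f(\omega) \, d{\rm HARM}_{V,z}(\omega)$ is bounded harmonic on $V$ by part (1) combined with dominated convergence, and it coincides with $g$ at $x = z$ since $K_z(z,\cdot) \equiv 1$. Uniqueness in the representation (applied to the difference) forces the displayed integral identity; specialising to $f = \mathbf{1}_B$ for Borel $B \subseteq M(V,C)$ recovers the Radon--Nikodym relation $K_z(x,\cdot) = d{\rm HARM}_{V,x}/d{\rm HARM}_{V,z}$. The principal obstacle is the boundary identification $M_\infty = f(Z_\infty)$, which is the substantive content of the Poisson--Martin theorem and requires nontrivial input (Choquet theory and extremal-ray analysis, or an equivalent probabilistic tail-measurability argument via the Markov property); everything else is routine bookkeeping with bounded martingales and the elementary Green-function identity above.
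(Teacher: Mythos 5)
This statement is quoted in the paper from \cite[Theorem 9.37]{W} without proof, so there is no internal argument to compare against; I can only assess your sketch on its own terms. Part (1) is fine: the Green-function identity $\sum_w p(x,w)G(w,y)=G(x,y)-\delta_{xy}$ and the limiting argument along $y_n\to y\in M(V,C)$ (using that a sequence of vertices converging to a boundary point eventually leaves any finite set, and local finiteness to exchange limit and finite sum) is the standard and correct route; you are also right that for $y\in V$ harmonicity only holds off $y$, which is the honest reading of the statement.

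In part (2) the architecture (Poisson--Martin representation for positive harmonic functions plus absolute continuity of the representing measure) is the right one, but two of your stated justifications would not survive being written out. First, ``${\rm HARM}_{V,z}$ charges the minimal boundary with full mass'' does not by itself give $\nu_g\ll{\rm HARM}_{V,z}$; the correct argument is domination: writing $0\le g\le C$ (after adding a constant), both $g$ and $C-g$ are positive harmonic, uniqueness of representing measures on the minimal boundary gives $\nu_g+\nu_{C-g}=C\,\nu_{\mathbf 1}$, hence $\nu_g\le C\,\nu_{\mathbf 1}$, and one then needs the separate (nontrivial) identification $\nu_{\mathbf 1}={\rm HARM}_{V,z}$, i.e.\ that the representing measure of the constant function is the exit distribution --- this is exactly the probabilistic content you would otherwise be extracting from $M_\infty=f(Z_\infty)$, so it cannot be waved through. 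Second, the step ``it coincides with $g$ at $x=z$ since $K_z(z,\cdot)\equiv 1$, and uniqueness applied to the difference forces the identity'' is not valid: two distinct bounded harmonic functions can agree at a single point, so agreement at $z$ identifies nothing. This detour is also unnecessary --- once you have $\nu_g=f\,d{\rm HARM}_{V,z}$ the representation $g(x)=\int K_z(x,\xi)f(\xi)\,d{\rm HARM}_{V,z}(\xi)$ holds at every $x$ directly from the Poisson--Martin theorem, and the Radon--Nikodym relation follows by taking $f=\mathbf 1_B$ as you say. With those two repairs the sketch reproduces the textbook proof.
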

\subsection{Transient trees and their Martin boundaries}
We now consider transient trees and their Martin boundaries. 
\begin{Definition}
A weighted graph $(T,C)$ is called a tree if and only if it is connected and does not have cycles.
 If $(T,C)$ is a tree, then
 for all $x,y\in T,$\ there exists a unique path between $x$ and $y$ with the minimal number of edges,
 which is independent of the weight $C$.
 We will denote by $\overline{xy}$ the shortest path between $x$ and $y$.
\end{Definition}
$(T,C)$ is said to be {\it rooted} when it has a fixed reference point, which will be denoted by $o.$
 In the rest of this paper, we always assume that $(T,C)$ is a rooted tree.

\begin{Definition}\label{def}
(1) An infinite path $(x_0,x_1,\cdot \cdot \cdot)\in T^{\mathbb{N}}$ is said to be an infinite ray from $x\in T$ if and only if $x_0=x$\ and$\ (x_0,x_1,\cdot \cdot \cdot,x_n)$ is the shortest path between from $x$ and $x_n$ for all $n\geq1$.\\
(2) For $x\in T,$\ define\ the\ height\ of\ $x$,\ $h(x)$\ by\ the\ length\ of\ the\ shortest\ path\ between\ $o$\ and\ $x$. Define $T_k:=\{x\in T;\ h(x)=k\}$ for $k\in\mathbb{N}$.\\
(3) For $x \in T,$ Define $N(x):=\{y \in T:y\ $is\ adjacent\ to$\ x \}.$ For $x\neq o$,
the parent of $x$, which will be denoted by $\pi(x)$, is the unique element of $N(x)$ which satisfies $h(\pi(x))=h(x)-1$.
 We set $S(x):=N(x)\backslash\{\pi(x)\}.$\\
(4) Define $\Sigma:=\Sigma(T,C)$\ to be\ the\ collection\ of\ infinite\ rays\ from\ $o$
 and $\hat{T}:=T\cup \Sigma(T,C).$ \\
 (5) For $x,y\in T$, define $N(x,y)\in\mathbb{N}$ by
 \begin{align*}
 N(x,y):=\frac{h(x)+h(y)-|x,y|}{2},
 \end{align*}
 where $|x,y|$ denotes the length of the shortest path between $x$ and $y$.
 We can extend $N(\cdot,\cdot)$ to $\hat{T}\times\hat{T}$ in the following manner:
 For $\omega=(\omega_0,\omega_1,\cdot\cdot\cdot)\in \Sigma,$
 define $[\omega]_n:=\omega_n$ for $n\geq 0$.
 For $\omega,\eta\in\Sigma$, we define
 \begin{align*}
 N(\omega,\eta):=\lim_{n\to\infty}\frac{h([\omega]_n)+h([\eta]_n)-|[\omega]_n,[\eta]_n|}{2}.
  \end{align*}
  It is easy to see that the limit exists.
  For $x\in T$ and $\omega\in\Sigma$, we can define $N(x,\omega)$ similarly.
 Notice that for $\omega,\eta\in\Sigma$, $N(\omega,\eta)$ can be expressed as follows:
   \begin{align*}
   N(\omega,\eta)=\max\{n \geq 0:[\omega]_n=[\eta]_n\}.
   \end{align*}
 We now define $[\omega,\eta]:=[\omega]_{N(\omega,\eta)}=[\eta]_{N(\omega,\eta)}$ for $\omega,\eta \in \Sigma$.\\
 (6) For $z,w\in\hat{T}$,
 let $d(z,w):=e^{-N(z,w)}$ with the convention $e^{-\infty}=0$.
 Then $d(\cdot,\cdot)$ defines an ultrametric on $\hat{T}.$\ Define $B_d(\omega,r):=\{\eta \in \Sigma:d(\omega,\eta)<r\}.$
\end{Definition}
The following theorem due to \cite{C} is a fundamental result on the Martin boundary of a tree. 

\begin{Theorem}\cite[Theorem 9.22.]{W} Assume $(T,C)$ is transient.
 Then the Martin compactification\ $\overline{T}$\ of\ $T$ is  always homeomorphic to $(\hat{T},d).$
\end{Theorem}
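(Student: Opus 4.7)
The plan is to exploit the unique-path property of trees to write the Martin kernel $K_o(x,y)$ explicitly and then pass to the limit along rays. The key identity is that for any vertex $m$ lying on the path $\overline{xy}$, the random walk from $x$ must pass through $m$ before reaching $y$, so by the strong Markov property applied at the first hitting time of $m$ one obtains the factorization
\begin{align*}
G(x,y)=F(x,m)\,G(m,y),
\end{align*}
where $F(a,b):=P_a(\exists\,n\geq 0:Z_n=b)$ is the hitting probability.

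Given $\omega=(o=\omega_0,\omega_1,\ldots)\in\Sigma$ and $x\in T$, set $m_x:=\omega_{N(x,\omega)}$, the deepest vertex at which $\overline{ox}$ meets $\omega$. For every $n>N(x,\omega)$, the vertex $m_x$ lies both on $\overline{x\omega_n}$ and on $\overline{o\omega_n}$, so applying the factorization to numerator and denominator and canceling the common factor $G(m_x,\omega_n)$ yields
\begin{align*}
K_o(x,\omega_n)=\frac{F(x,m_x)}{F(o,m_x)}
\end{align*}
for all large $n$. Hence the limit $K_o(x,\omega):=\lim_n K_o(x,\omega_n)$ exists and defines a continuous extension of $K_o(x,\cdot)$ from $T$ to $\hat{T}$ equipped with the ultrametric $d$, which provides a candidate map $\iota:\hat{T}\to\overline{T}$ that is the identity on the dense subset $T$. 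Separation of points is the next step: if $\omega\neq\eta$ diverge at level $k=N(\omega,\eta)$ and we take $x=\omega_{k+1}$, then $m(x,\omega)=x$ while $m(x,\eta)=\omega_k$, so $K_o(x,\omega)=1/F(o,x)$ whereas $K_o(x,\eta)=F(x,\omega_k)/F(o,\omega_k)$; equality of these values would force $F(x,\omega_k)F(\omega_k,x)=1$, which is impossible under transience since it would entail the walk returning to adjacent vertices infinitely often.

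To conclude, I would verify that $\hat{T}$ is compact in the ultrametric topology (by local finiteness each level $T_k$ is finite, so König's lemma together with a diagonal extraction yields the Bolzano-Weierstrass property on $\hat{T}$), and that ultrametric convergence on $\hat{T}$ coincides with joint convergence of all kernels $K_o(x,\cdot)$: if $\eta_n\to\eta$ in $d$ then $N(\eta_n,\eta)\to\infty$, so for every fixed $x$ the meeting points $m(x,\eta_n)$ stabilize at $m(x,\eta)$ and $K_o(x,\eta_n)\to K_o(x,\eta)$; conversely, kernel convergence tested at the vertices lying on $\eta$ forces the initial segments of $\eta_n$ to agree with those of $\eta$ to arbitrary depth. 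These facts together imply that $\iota$ is a continuous bijection from a compact Hausdorff space onto a Hausdorff space, hence a homeomorphism, identifying $\overline{T}$ with $\hat{T}$. The most delicate step is the separation statement, which is where transience is genuinely used; the topology comparison is then a routine consequence of the explicit formula above.
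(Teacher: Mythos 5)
This theorem is stated in the paper as a quoted result (Cartier's theorem, via \cite[Theorem 9.22]{W}) and no proof is given there, so there is nothing internal to compare your argument against. Your proposal is essentially the standard textbook proof and is correct: the cut-vertex factorization $G(x,y)=F(x,m)G(m,y)$, the resulting stabilization $K_o(x,\omega_n)=F(x,[x,\omega])/F(o,[x,\omega])$ for $n\geq N(x,\omega)$, the separation of boundary points via $F(x,y)F(y,x)<1$ under transience, and compactness of $(\hat{T},d)$ by K\"onig's lemma are exactly the ingredients used in \cite{W}. The only points worth spelling out are that the well-definedness of $\iota$ rests on the universal property of the Martin compactification (pointwise convergence of all kernels is what convergence in $\overline{T}$ means), that $F(o,[x,\omega])>0$ by irreducibility, and that injectivity must also be checked between points of $T$ and points of $\Sigma$ (which follows since $T$ is discrete and open in $\overline{T}$); with those remarks your argument is complete.
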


By the above theorem, we will identify the Martin boundary $M(T,C)$ with $\Sigma$, then $(\Sigma,d)$ is compact.
 In the rest of this article, we will always assume the following condition.
\begin{Assumption}\label{assumption}
$(T(x),C|_{T(x)})$ is transient for any $x\in T$ where $T(x)=\{y\in T:x \in \overline{o y}\}$ and
$C|_{T(x)}$ is the restriction of $C$ to $T(x)$.
\end{Assumption}

\subsection{The jump process on the boundary of a deterministic tree}

In what follows, we will write $K(\cdot,\cdot)=K_o(\cdot,\cdot)$ and ${\rm HARM}_{T}={\rm HARM}_{T,o}$
when $(T,C)$ is a rooted tree.

\begin{Definition} Define a linear map $H:L^1(\Sigma,{\rm HARM}_{T})\rightarrow l(T)$ by
\begin{equation}Hf(x):=\int_{\Sigma}K(x,y)f(y)d{\rm HARM}_{T}(y) \nonumber \end{equation}
for any $x\in T$ and $f\in L^1(\Sigma,{\rm HARM}_{T}).$ Moreover,
 define $\mathcal{F}_{\Sigma}:=\{f \in L^1(\Sigma,{\rm HARM}_{T}):Hf\in \mathcal{F} \}$
 and $\mathcal{E}_{\Sigma}(f,g):=\mathcal{E}(Hf,Hg)$ for any $f,g\in \mathcal{F}_{\Sigma}.$
\end{Definition}

In \cite{K}, Kigami studies various properties of the quadratic form $(\mathcal{E}_{\Sigma},\mathcal{F}_{\Sigma}).$
 In particular, the following result is established. 
\begin{Theorem}\label{DF}\cite[Theorem 5.6.]{K} $(\mathcal{E}_{\Sigma},\mathcal{F}_{\Sigma})$
 is a regular Dirichlet form on $L^2(\Sigma,{\rm HARM}_{T}).$
\end{Theorem}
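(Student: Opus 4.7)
The plan is to verify the three defining conditions of a regular Dirichlet form for $(\mathcal{E}_\Sigma, \mathcal{F}_\Sigma)$ on $L^2(\Sigma, {\rm HARM}_T)$: closedness, the Markov (unit-contraction) property, and the existence of a dense core inside $\mathcal{F}_\Sigma \cap C(\Sigma)$. The underlying structural fact is that $H$ is the harmonic extension from $\Sigma$ to $T$ and, when restricted to $\mathcal{F}_\Sigma$, realizes the $\mathcal{E}$-orthogonal projection onto the harmonic $\mathcal{F}$-functions. Well-definedness on $L^2$ is immediate: since ${\rm HARM}_T$ is a probability measure, $L^2 \subset L^1$, and for each $x \in T$ the Martin kernel $K(x,\cdot)$ is a bounded continuous function on the compact boundary $\Sigma$, so $Hf(x)$ makes sense and depends continuously on $f$ in $L^2$. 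For closedness, take an $\mathcal{E}_{1,\Sigma}$-Cauchy sequence $\{f_n\} \subset \mathcal{F}_\Sigma$; then $f_n \to f$ in $L^2$ and $\{Hf_n\}$ is $\mathcal{E}$-Cauchy in $\mathcal{F}$ with $Hf_n(o) = \int f_n\, d{\rm HARM}_T \to Hf(o)$. The resistance estimate $|u(x) - u(o)|^2 \le R(x,o)\, \mathcal{E}(u,u)$ applied to differences $Hf_n - Hf_m$ yields pointwise convergence $Hf_n \to u \in \mathcal{F}$; since also $Hf_n(x) \to Hf(x)$ pointwise by boundedness of $K(x,\cdot)$, one identifies $u = Hf$, so $f \in \mathcal{F}_\Sigma$ and $\mathcal{E}_\Sigma(f_n - f) \to 0$.

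For the Markov property, given $f \in \mathcal{F}_\Sigma$ set $\bar{f} := (f \vee 0) \wedge 1$ and $v := (Hf \vee 0) \wedge 1 \in \mathcal{F}$. The elementary unit-contraction property of $(\mathcal{E}, \mathcal{F})$ gives $\mathcal{E}(v,v) \le \mathcal{E}(Hf, Hf)$, and a Fatou-type convergence along rays identifies the Martin-boundary trace of $v$ with $\bar{f}$. Energy-minimality of $H\bar{f}$ among $\mathcal{F}$-functions with boundary value $\bar{f}$ then yields $\mathcal{E}(H\bar{f}, H\bar{f}) \le \mathcal{E}(v, v) \le \mathcal{E}(Hf, Hf)$, whence $\bar{f} \in \mathcal{F}_\Sigma$ and $\mathcal{E}_\Sigma(\bar{f}) \le \mathcal{E}_\Sigma(f)$.

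For regularity, I would use the cylinder algebra $\mathcal{C}$ of functions on $\Sigma$ that are constant on each cylinder $\Sigma_x := \{\omega : [\omega]_n = x\}$ for some level $n$ and all $x \in T_n$. The harmonic extension of any $g \in \mathcal{C}$ is a finite linear combination of equilibrium potentials $y \mapsto {\rm HARM}_{T,y}(\Sigma_x)$, each harmonic with energy equal to the capacity of the clopen cylinder $\Sigma_x$, hence in $\mathcal{F}$; thus $\mathcal{C} \subset \mathcal{F}_\Sigma \cap C(\Sigma)$. Ultrametricity of $d$ together with Stone--Weierstrass gives uniform density of $\mathcal{C}$ in $C(\Sigma)$, and for $\mathcal{E}_{1,\Sigma}$-density one approximates $f \in \mathcal{F}_\Sigma$ by the cylinder function $\tilde{f}_n(\omega) := Hf([\omega]_n)$, whose harmonic extension agrees with $Hf$ on $T_n$; martingale-type convergence of $Hf$ along rays combined with the closedness established above gives $\tilde{f}_n \to f$ in $\mathcal{E}_{1,\Sigma}$.

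The main obstacle is the Markov step: rigorously establishing the Riesz-type orthogonal decomposition of $\mathcal{F}$ into ``vanishing at infinity'' and harmonic parts, and the associated Fatou-type identification of the boundary trace of the truncated function $v$. Both ingredients are available in the discrete potential theory of reversible chains on trees, but their careful invocation in the present resistance-form setting is the technical heart of the proof.
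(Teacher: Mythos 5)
The paper does not prove this statement: it is imported verbatim from Kigami \cite[Theorem 5.6]{K}, so there is no in-paper argument to compare yours against. Judged against the proof in the cited source, your outline follows essentially the same route — closedness via the resistance estimate $|u(x)-u(o)|^2\le R(x,o)\,\mathcal{E}(u,u)$ applied to $Hf_n-Hf_m$, the Markov property via energy-minimality of the harmonic extension among finite-energy functions with the given boundary trace, and regularity via the cylinder-function core $\{\mathbf{1}_{\Sigma(x)}\}$ whose harmonic extensions $y\mapsto {\rm HARM}_{T,y}(\Sigma(x))$ have finite energy under Assumption \ref{assumption}. Two points deserve more care than you give them. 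First, the domain is defined as $\mathcal{F}_{\Sigma}=\{f\in L^1(\Sigma,{\rm HARM}_T):Hf\in\mathcal{F}\}$; for the form to be a Dirichlet form on $L^2(\Sigma,{\rm HARM}_T)$ you must show $\mathcal{F}_{\Sigma}\subseteq L^2$, which does not follow from ${\rm HARM}_T$ being a probability measure (that gives the wrong inclusion $L^2\subset L^1$) and is a genuine step in Kigami's treatment. Second, the ``Fatou-type identification'' of the boundary trace of $v=(Hf\vee 0)\wedge 1$ with $\bar f$, together with the Royden-type decomposition $\mathcal{F}=\mathcal{F}_0\oplus\mathcal{H}$ that underlies the energy-minimality claim, is exactly the technical content of the theorem; you correctly flag it as the heart of the matter, but as written it is an announcement of the needed lemmas rather than a proof of them. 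So the proposal is a sound skeleton of the standard argument with its hard parts identified but not carried out.
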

By the above theorem, there exists a stochastic process on $\Sigma$
 which corresponds to $(\mathcal{E}_{\Sigma},\mathcal{F}_{\Sigma})$.
 Before explaining the results on the properties of this process studied in \cite{K},
 we introduce the intrinsic metric on the boundary $\Sigma$, and conditions which tell us when the harmonic measure
 satisfies the volume doubling property with respect to the metric.
\begin{Definition}\label{intrinsic}
 Define $D(x)={\rm HARM}_{T}(\Sigma(x))R(x)$ for $x\in T$, where
 $\Sigma(x)=\{\omega\in \Sigma:\ [\omega]_n=x\ {\it for\ some}\ n\geq 0\}$
 and $R(x)$ is the effective resistance from $x$ to $\Sigma(x)$ in $T(x)$.
 Note that Assumption \ref{assumption} guarantees that $R(x)<+\infty$\ and\ $D(x)<+\infty$\ for\ any\ $x\in T$.
 For $\omega \neq \eta \in \Sigma$, define $D(\omega,\eta)=D([\omega,\eta])$ and $D(\omega,\omega)=0$ for any $\omega \in \Sigma.$
\end{Definition}

\begin{Proposition}\label{D}\cite[Proposition 6.4.]{K}\\
(1) For any $\omega\in\Sigma$, $\{D([\omega]_n)\}_{n\geq0}$ is a strictly decreasing sequence. In particular, 
$D(\cdot,\cdot)$ is an ultrametric on $\Sigma$. i.e, for any $\omega,\tau,\eta\in\Sigma$,
\begin{equation*}
\max\{D(\omega,\eta),D(\eta,\tau)\}\geq D(\omega,\tau).
\end{equation*}
(2) Define $B_D(\omega,r)=\{\eta \in \Sigma:D(\omega,\eta)<r\}$ for any $\omega \in \Sigma$ and $r>0$.
 Then $B_D(\omega,r)=\Sigma([\omega]_n)$ if and only if $D([\omega]_n)<r\le D([\omega]_{n-1}).$
\end{Proposition}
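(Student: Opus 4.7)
The plan is to first prove that $\{D([\omega]_n)\}_{n\geq 0}$ is strictly decreasing and then deduce both the ultrametric property and part (2) as formal consequences.

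\textbf{Step 1: A recursive formula for $D$.} Fix $x\in T$ and let $y_1,\dots,y_k$ be its children in $T$, with edge resistances $r_i:=1/C(x,y_i)>0$. By Assumption \ref{assumption}, $R(y_i)<\infty$, and standard electric-network theory on the subtree $T(x)$ gives the parallel decomposition
\begin{equation*}
R(x)^{-1}=\sum_{i=1}^{k}(r_i+R(y_i))^{-1}.
\end{equation*}
Since the $\Sigma(y_i)$ are pairwise disjoint and cover $\Sigma(x)$, the harmonic measures add:
$\mathrm{HARM}_T(\Sigma(x))=\sum_i\mathrm{HARM}_T(\Sigma(y_i))$. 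The key ingredient I would establish is the ratio identity
\begin{equation*}
\frac{\mathrm{HARM}_T(\Sigma(y_i))}{\mathrm{HARM}_T(\Sigma(x))}=\frac{R(x)}{r_i+R(y_i)}.
\end{equation*}
This follows from the strong Markov property at the first hitting time of $x$, together with the fact that the ratio on the left, viewed as a conditional exit distribution of the walk started at $x$ in $T$, coincides with the exit distribution of the walk started at $x$ in the transient subnetwork $T(x)$ (a renewal/$h$-transform argument: excursions to $\pi(x)$ and back cancel in the ratio). The $T(x)$-exit distribution is given by the classical electric-network formula $R(x)/(r_i+R(y_i))$.

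\textbf{Step 2: Strict decrease.} Combining the two formulas above with the definition of $D$,
\begin{equation*}
\frac{D(y_i)}{D(x)}=\frac{\mathrm{HARM}_T(\Sigma(y_i))}{\mathrm{HARM}_T(\Sigma(x))}\cdot\frac{R(y_i)}{R(x)}=\frac{R(y_i)}{r_i+R(y_i)}<1,
\end{equation*}
since $r_i>0$. Applying this with $x=[\omega]_n$ and $y_i=[\omega]_{n+1}$ yields $D([\omega]_{n+1})<D([\omega]_n)$, proving the monotonicity claim of part (1).

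\textbf{Step 3: Ultrametric property.} For $\omega,\tau,\eta\in\Sigma$, the function $N$ satisfies $N(\omega,\tau)\geq\min\{N(\omega,\eta),N(\eta,\tau)\}$ from the tree structure. Without loss of generality assume the minimum is $N(\omega,\eta)$; then $[\omega,\tau]=[\omega]_{N(\omega,\tau)}$ lies on the ray $\omega$ at depth at least $N(\omega,\eta)$, so by the strict decrease along $\omega$ we get $D(\omega,\tau)=D([\omega,\tau])\leq D([\omega,\eta])=D(\omega,\eta)\leq\max\{D(\omega,\eta),D(\eta,\tau)\}$.

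\textbf{Step 4: Characterization of $D$-balls.} Given $\omega$ and $r>0$, the strictly decreasing sequence $\{D([\omega]_n)\}$ determines a unique $n$ with $D([\omega]_n)<r\leq D([\omega]_{n-1})$ (using $D([\omega]_{-1}):=\infty$). Since $\eta\in B_D(\omega,r)$ iff $D([\omega,\eta])<r$ iff $[\omega,\eta]$ is a descendant (inclusive) of $[\omega]_n$, i.e.\ $N(\omega,\eta)\geq n$, this is exactly the condition $\eta\in\Sigma([\omega]_n)$, establishing part (2).

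\textbf{Main obstacle.} The only nontrivial step is the ratio identity in Step 1; once that is in hand, everything else is formal. The delicate point is justifying that the exit-ratio for the walk in $T$ equals the one in $T(x)$, which requires careful use of the strong Markov property to handle multiple excursions from $x$ to $\pi(x)$ before the walk finally commits to $T(x)$.
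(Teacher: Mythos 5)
Your argument is correct, but note that the paper itself offers no proof of this statement --- it is quoted directly from \cite[Proposition 6.4]{K} --- so what you have written is a from-scratch verification rather than a variant of an argument in the text. The one substantive ingredient, the ratio identity in your Step 1, is genuine, and it is essentially the same formula the paper later imports as \cite[Theorem 3.8]{K} in the proof of Corollary \ref{>}. Your sketch of it can be made precise exactly along the lines you indicate: by the strong Markov property at the first hitting time of $x$, the ratio reduces to $P_x(Z_\infty\in\Sigma(y_i))/P_x(Z_\infty\in\Sigma(x))$ for the walk on all of $T$, and a last-exit decomposition at $x$ gives
\begin{equation*}
P_x(Z_\infty\in\Sigma(y_i))=G(x,x)\,p(x,y_i)\,P_{y_i}(\tau_x=\infty)=\frac{G(x,x)}{C(x)}\cdot\frac{1}{r_i+R(y_i)},
\end{equation*}
where the escape probability $P_{y_i}(\tau_x=\infty)=r_i/(r_i+R(y_i))$ may be computed inside $T(y_i)$ together with the single edge $(x,y_i)$, because $y_i$ separates $T(y_i)\setminus\{y_i\}$ from $x$. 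The Green-function factor $G(x,x)$ is precisely what absorbs all excursions through $\pi(x)$, and it cancels in the ratio; summing over $i$ and using the parallel law turns the denominator into $R(x)^{-1}$, giving your identity. Steps 2--4 then go through as written; note that Assumption \ref{assumption} is what guarantees $R(y_i)<\infty$ and hence the strictness of $R(y_i)/(r_i+R(y_i))<1$. Two minor loose ends: in Step 4 you only establish the ``if'' direction of the stated equivalence (the ``only if'' needs the extra remark that, by strict monotonicity of $D([\omega]_n)$ and the nesting of the sets $\Sigma([\omega]_n)$, no other index can represent the same ball when those sets are distinct), and your phrase ``determines a unique $n$'' tacitly assumes $r\le D([\omega]_{-1})=\infty$ and that some $D([\omega]_n)<r$, which is harmless here since the proposition is an equivalence for a given $n$ rather than an existence claim.
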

The next result tells us when the harmonic measure ${\rm HARM}_{T}$
 satisfies the volume doubling property with respect to $D$
 ({\it i.e,} there exists a constant $c>0$ such that ${\rm HARM}_{T}(B_D(\omega,2r))\leq c{\rm HARM}_{T}(B_D(\omega,r))$
 for any $r>0$, and $\omega\in\Sigma$), which is a critical assumption for the heat kernel estimates in \cite{K}. 
\begin{Theorem}\cite[Theorem 6.5, Proposition 6.6.]{K}\label{vd}
\begin{itemize}
\item
The harmonic measure ${\rm HARM}_{T}$ has the volume
 doubling property with respect to $D$ if and only if the following conditions {\bf (EL)} and {\bf (D)} hold.
 \begin{description}
\item[{\bf (EL)}]: There exists $c_1\in(0,1)$ such that $c_1\le {\rm HARM}_{T}(\Sigma(x))/{\rm HARM}_{T}(\Sigma(\pi(x)))$
 for any $x\in T\backslash\{o\}.$
\item[{\bf (D)}]: There exist $m\geq1$ and $\theta \in(0,1)$ such that $D([\omega]_{n+m})\le \theta D([\omega]_n)$
 for any $n\geq0$ and $\omega \in \Sigma.$
 \end{description}
 \item The condition {\bf (EL)} implies that $\sup_{x\in T}\#S(x)<\infty$. 
 \end{itemize}
 Notice that the condition {\bf (EL)} fails
 if there exists $x_0\in T\setminus\{o\}$ such that $\#S(x_0)=1$.
 Thus, ${\rm HARM}_{T}$ does not satisfy the volume doubling property with respect to $D$ when
 either $\#S(x_0)=1$ for some $x_0\in T\setminus\{o\}$ or $\sup_{x\in T}\#S(x)=\infty$.
\end{Theorem}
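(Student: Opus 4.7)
The key observation is Proposition \ref{D}(2): every $D$-ball around $\omega$ is of the form $\Sigma([\omega]_n)$, so VDP reduces to a combinatorial comparison of ${\rm HARM}_T$-masses of the nested sets $\Sigma(x)$ along rays. The role of (EL) is to control the mass drop at each ``level,'' while (D) controls how many levels of $\Sigma$-sets can fit inside a doubled $D$-radius. For the direction VDP $\Rightarrow$ (EL), fix $x \neq o$ and $\omega \in \Sigma(x)$, and take $r = D(\pi(x))$. Since $D$ is strictly decreasing along the ray, $D(x) < r$, so Proposition \ref{D}(2) gives $B_D(\omega, r) = \Sigma(x)$; also $D(\pi(x)) = r < 2r$, so $B_D(\omega, 2r) \supseteq \Sigma(\pi(x))$. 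VDP at radius $r$ then yields
\begin{equation*}
{\rm HARM}_T(\Sigma(\pi(x))) \leq c\, {\rm HARM}_T(\Sigma(x)),
\end{equation*}
uniformly in $x$, which is (EL) with $c_1 = 1/c$.

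For VDP $\Rightarrow$ (D), I would iterate the doubling bound to get ${\rm HARM}_T(B_D(\omega, 2^j r)) \leq c^j {\rm HARM}_T(B_D(\omega, r))$. Since ${\rm HARM}_T(\Sigma) = 1$ and each single-level mass ratio is bounded below by $c_1$ thanks to (EL) obtained above, after a fixed number of doublings (depending only on $c$ and $c_1$) the ball must skip past at least one $\Sigma$-level, producing uniform $m$ and $\theta$. Conversely, for (EL)+(D) $\Rightarrow$ VDP: given $B_D(\omega, r) = \Sigma([\omega]_n)$, write $B_D(\omega, 2r) = \Sigma([\omega]_k)$ with $k \leq n$; combining $r \leq D([\omega]_{n-1})$ with $D([\omega]_k) \leq 2r$ and iterating (D), one checks that $n - k$ is bounded by a constant $N = N(m, \theta)$, so applying (EL) $N$ times gives a doubling inequality with constant $c_1^{-N}$.

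The bounded-degree assertion follows by a pigeonhole argument: since $\Sigma(x) = \bigsqcup_{y \in S(x)} \Sigma(y)$, some child $y$ satisfies ${\rm HARM}_T(\Sigma(y))/{\rm HARM}_T(\Sigma(x)) \leq 1/\#S(x)$, so unboundedness of $\#S(x)$ contradicts (EL); together with the equivalence, this handles the case $\sup_x \#S(x) = \infty$ in the final remark. For the case $\#S(x_0) = 1$: the unique child $y$ satisfies $\Sigma(y) = \Sigma(x_0)$ while $D(y) < D(x_0)$ strictly, producing two adjacent $D$-levels carrying exactly the same harmonic mass, which I would show forces the doubling constant to blow up once this degeneracy is combined with the estimate at the appropriate scale. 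The main obstacle I anticipate is the quantitative part of VDP $\Rightarrow$ (D): translating a measure-scale doubling bound into metric-scale geometric decay of $D$ along rays requires careful use of the ultrametric/tree structure together with (EL), and obtaining $m$ and $\theta$ uniformly in $\omega$ and $n$ will be the most delicate step.
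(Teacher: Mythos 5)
The first thing to say is that the paper does not prove this statement at all: it is imported verbatim from \cite[Theorem 6.5, Proposition 6.6]{K}, and even the closing ``Notice that\dots'' paragraph is asserted without argument. So your proposal is not competing with a proof in the paper; it has to stand on its own. Several pieces of it do. The derivation of {\bf (EL)} from volume doubling via Proposition \ref{D}(2) with $r=D(\pi(x))$ is correct (strict monotonicity of $n\mapsto D([\omega]_n)$ gives $B_D(\omega,r)=\Sigma(x)$ and $\Sigma(\pi(x))\subseteq B_D(\omega,2r)$, hence $c_1=1/c$). The pigeonhole bound $\#S(x)\le 1/c_1$ from ${\rm HARM}_T(\Sigma(x))=\sum_{y\in S(x)}{\rm HARM}_T(\Sigma(y))$ is exactly the content of Proposition 6.6. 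The direction {\bf (EL)}$+${\bf (D)}$\Rightarrow$VDP is also sound: if $B_D(\omega,r)=\Sigma([\omega]_n)$ and $B_D(\omega,2r)=\Sigma([\omega]_k)$, then $D([\omega]_{n-1})\ge r$ and $D([\omega]_k)<2r$ together with iterates of {\bf (D)} bound $n-k$ by an $N=N(m,\theta)$, and $N$ applications of {\bf (EL)} give the doubling constant $c_1^{-N}$.

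The two places where you hedge are genuine gaps. For VDP$\Rightarrow${\bf (D)}: the mechanism you describe uses the wrong inequality. If {\bf (D)} fails, say $D([\omega]_{n+m})>\tfrac12 D([\omega]_n)$ with $m$ large, the doubling bound applied at $r$ just above $D([\omega]_{n+m})$ yields ${\rm HARM}_T(\Sigma([\omega]_n))\le c\,{\rm HARM}_T(\Sigma([\omega]_{n+m}))$, and this contradicts nothing unless the mass \emph{decreases by a definite factor per level}, i.e.\ unless you have an upper bound ${\rm HARM}_T(\Sigma(x))\le(1-\delta){\rm HARM}_T(\Sigma(\pi(x)))$. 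The stated {\bf (EL)} is only a lower bound and is perfectly consistent with the ratio being identically $1$ along a chain of unique-child vertices, in which case no amount of ``level skipping'' in measure controls the decay of $D$; moreover $D(x)={\rm HARM}_T(\Sigma(x))R(x)$ involves the resistance, which your sketch never touches. Second, your treatment of the $\#S(x_0)=1$ case is internally inconsistent: as you yourself observe, the unique child $y$ has ${\rm HARM}_T(\Sigma(y))/{\rm HARM}_T(\Sigma(x_0))=1$, which \emph{satisfies} the displayed lower-bound form of {\bf (EL)} rather than violating it, and a single flat step in $r\mapsto{\rm HARM}_T(B_D(\omega,r))$ does not make the doubling constant blow up. Both this claim and the (D) direction only go through for the two-sided (``elliptic'') form of the condition in \cite{K} — equivalently, the stated lower bound together with $\#S(x)\ge 2$ for all $x$, since then ${\rm HARM}_T(\Sigma(x))\le(1-c_1){\rm HARM}_T(\Sigma(\pi(x)))$ by applying the lower bound to a sibling. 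You need to either state and use that two-sided condition or derive the upper ratio bound from VDP; as written, these two steps do not close.
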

\begin{Remark}\label{vdd}
 For $x\in T\setminus\{o\}$, we have 
 \begin{align}\label{sigma1}
 \Sigma(x)=\{\omega\in \Sigma\ ;\ N(x,\omega)\geq h(x)\}=\{\omega\in \Sigma\ ;\ d(x,\omega)\leq e^{-h(x)}\},
 \end{align}
 and
 \begin{align}\label{sigma2}
 \Sigma(\pi(x))=\{\omega\in \Sigma\ ;\ d(x,\omega)\leq e\cdot e^{-h(x)}\}.
 \end{align}
 Assume that we have the volume doubling property of ${\rm HARM}_T$ with respect to $d$, namely, 
 there exists a constant $C>1$ such that for any $\omega\in \Sigma$ and any $0<r<{\rm diam}(\Sigma,d)$, we have
 \begin{align*}
 {\rm HARM}_T(B_d(\omega,r))\leq C{\rm HARM}_T(B_d(\omega,2r)).
 \end{align*}
 Then by  (\ref{sigma1}) and (\ref{sigma2}),  for any $x\in T\setminus\{o\}$ we have 
 \begin{align*}
 \frac{1}{C}\leq\frac{{\rm HARM}_T(\Sigma(x))}{{\rm HARM}_T(\Sigma(\pi(x)))},
 \end{align*}
 which imply the condition {\bf(EL)} in Theorem \ref{vd} since $1/C\in(0,1)$.
 Thus, ${\rm HARM}_{T}$ does not satisfy the volume doubling property with respect to $d$ when
 either $\#S(x_0)=1$ for some $x_0\in T\setminus\{o\}$ or $\sup_{x\in T}\#S(x)=\infty$.
 \end{Remark}
 In \cite[Section 7]{K}, Kigami gives the following expression of the heat kernel
 associated with the regular Dirichlet form $(\mathcal{E}_{\Sigma},\mathcal{F}_{\Sigma})$ by using an eigenfunction expansion. 
\begin{eqnarray}\label{expression}\nonumber
p_t(\omega,\eta)&=&\sum_{n\geq0}\frac{\exp\left(-t/D\left([\omega]_{n-1}\right)\right)-\exp(-t/D([\omega]_n))}{{\rm HARM}_{T}(\Sigma([\omega]_n))}
\mathbf{1}_{\Sigma([\omega]_n)}(\eta)\\
&=&\begin{cases}
1+\sum_{n=0}^{\infty}\left(\dfrac{1}{{\rm HARM}_{T}(\Sigma([\omega]_{n+1}))}-\dfrac{1}{{\rm HARM}_{T}(\Sigma([\omega]_{n}))}\right)
\exp(-t/D([\omega]_n))\ \ {\rm if}\ \omega=\eta \\
\sum_{n=0}^{N(\omega,\eta)}\dfrac{1}{{\rm HARM}_{T}(\Sigma([\omega]_n))}
\left(\exp(-t/D([\omega]_{n-1}))-\exp(-t/D([\omega]_n)\right)
\ \ \ \ \ \ {\rm if}\ \omega \neq \eta, 
\end{cases}
\end{eqnarray}
 with the convention $1/D([\omega]_{-1})=0.$
 If we allow $\infty$ as a value, $p_t(\omega,\eta)$ is well-defined on $(0,\infty)\times \Sigma^2.$
 Note that we have $p_t(\omega,\eta)=p_t(\eta,\omega)$ and $p_t(\omega,\omega)\geq p_t(\omega,\eta)$
 for any $\omega,\eta\in\Sigma$ from the above expression.
 In fact, the heat kernel $p_t(\omega,\eta)$ which is given above is shown to be the transition density
 of the Hunt process associated with the regular Dirichlet form 
 $(\mathcal{E}_{\Sigma},\mathcal{F}_{\Sigma})$ under suitable assumptions.

\begin{Theorem}\label{semigroup}\cite[Proposition 7.2,Theorem 7.3.]{K} 
Assume that $\lim_{n \to \infty}D([\omega]_n)=0$ for any
 $\omega \in \Sigma.$
  Then,
\begin{equation*}
\int_{\Sigma}p_t(\omega,\eta)d{\rm HARM}_{T}(\eta)=1,\ \ and\ \ \int_{\Sigma}p_t(\omega,\xi)p_s(\xi,\eta)d{\rm HARM}_{T}(\xi)=p_{t+s}(\omega,\eta),
\end{equation*}
for any $\omega,\eta \in \Sigma$ with $\omega \neq \eta$ and any $t,s>0$.
 Moreover, there exists a Hunt process $(\{X_t\}_{t>0},\{P_{\omega}\}_{\omega\in\Sigma})$ on $\Sigma$ whose transition density is $p_t(\omega,\eta)$ i.e.
\begin{equation}
E_{\omega}(u(X_t))=\int_{\Sigma}p_t(\omega,\eta)u(\eta)d{\rm HARM}_{T}(\eta),
\end{equation}
for any $\omega \in \Sigma$ and any Borel measurable function $u:\Sigma\rightarrow\mathbb{R}$,
 where $E_{\omega}(\cdot)$ is the expectation with respect to $P_{\omega}$.
\end{Theorem}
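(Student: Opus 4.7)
The plan is to exploit the nested cellular structure built into the explicit formula (\ref{expression}). Fix $\omega\in\Sigma$, set $a_n(t):=\exp(-t/D([\omega]_n))$ with the convention $a_{-1}(t)=1$, and rewrite
\[
p_t(\omega,\eta)=\sum_{n\geq 0}\frac{a_{n-1}(t)-a_n(t)}{{\rm HARM}_T(\Sigma([\omega]_n))}\mathbf{1}_{\Sigma([\omega]_n)}(\eta).
\]
By Proposition \ref{D}(1) the summands are nonnegative, and each indicator integrates to ${\rm HARM}_T(\Sigma([\omega]_n))$, which makes the series ideally suited to telescoping.

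First I would dispatch the normalization: interchange sum and integral by monotone convergence; the $n$-th integrated term contributes $a_{n-1}(t)-a_n(t)$, so the total telescopes to $1-\lim_{n\to\infty}a_n(t)$. The hypothesis $D([\omega]_n)\to 0$ forces $a_n(t)\to 0$, so the integral equals $1$.

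For Chapman--Kolmogorov, the cleanest route is a spectral decomposition. For each $x\in T\setminus\{o\}$ whose parent $\pi(x)$ has $k(x)\geq 2$ children $y_1,\ldots,y_{k(x)}$, I would construct $k(x)-1$ piecewise-constant functions on $\Sigma(\pi(x))$, constant on each $\Sigma(y_i)$, orthogonal to $\mathbf{1}_{\Sigma(\pi(x))}$ in $L^2({\rm HARM}_T)$ and mutually orthogonal. Together with the constant function $\mathbf{1}_\Sigma$, the resulting system $\{\phi_j\}_j$ should form an orthonormal basis of $L^2(\Sigma,{\rm HARM}_T)$ consisting of eigenfunctions of the $L^2$-generator associated to $(\mathcal{E}_\Sigma,\mathcal{F}_\Sigma)$, with eigenvalues $\lambda_j$ expressible in terms of $1/D$ at appropriate vertices. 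Comparing coefficients against (\ref{expression}) yields the spectral expansion
\[
p_t(\omega,\eta)=\sum_j e^{-\lambda_j t}\phi_j(\omega)\phi_j(\eta),
\]
and orthonormality then delivers
\[
\int_\Sigma p_t(\omega,\xi)p_s(\xi,\eta)\,d{\rm HARM}_T(\xi)=\sum_j e^{-(t+s)\lambda_j}\phi_j(\omega)\phi_j(\eta)=p_{t+s}(\omega,\eta).
\]
A hands-on alternative is to partition $\Sigma$ into the annuli $\Sigma([\omega]_n)\setminus\Sigma([\omega]_{n+1})$ on which $p_t(\omega,\cdot)$ is constant, exploit the symmetry $p_s(\xi,\eta)=p_s(\eta,\xi)$ noted after (\ref{expression}) to evaluate the inner integral by another telescoping sum, and reorganize the resulting double series.

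Existence of the Hunt process then comes from Theorem \ref{DF} via the general Fukushima--Oshima--Takeda theory, which already supplies a Hunt process associated with the regular Dirichlet form $(\mathcal{E}_\Sigma,\mathcal{F}_\Sigma)$; identifying $p_t$ as its transition density is immediate once the operator $T_tf(\omega):=\int p_t(\omega,\eta)f(\eta)\,d{\rm HARM}_T(\eta)$ is known to diagonalize on the same eigenbasis as the $L^2$-semigroup, which is built into the spectral expansion above. I expect the main obstacle to be the explicit construction and the $L^2$-completeness of this eigenbasis, especially without a uniform bound on $\#S(x)$; completeness reduces to showing that the cylinder indicators $\{\mathbf{1}_{\Sigma(x)}:x\in T\}$ are total in $L^2(\Sigma,{\rm HARM}_T)$, which follows from the ultrametric structure of $(\Sigma,d)$ by a routine approximation argument. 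The direct route to Chapman--Kolmogorov sidesteps the spectral machinery but requires careful bookkeeping to manage convergence of the double series near coincident arguments.
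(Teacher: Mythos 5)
This statement is imported verbatim from \cite{K} (Proposition 7.2 and Theorem 7.3 there); the paper under review gives no proof of it, only the citation. Your outline is essentially the argument of \cite{K} itself: the paper explicitly notes that the expression (\ref{expression}) arises from an eigenfunction expansion, and the Haar-type basis of piecewise-constant functions on cylinder sets with eigenvalues $1/D(x)$, together with the telescoping computation for normalization and the Fukushima--Oshima--Takeda construction from the regular Dirichlet form of Theorem \ref{DF}, is exactly Kigami's route. The only step you treat too lightly is the passage from the $L^2$-semigroup identification to a Hunt process whose transition density is $p_t$ \emph{for every} $\omega$: this requires showing that $p_t$ preserves continuous functions and is strongly continuous in the uniform norm (the Feller-type argument the paper itself invokes when proving its Theorem 4.1), not merely that the integral operator diagonalizes on the eigenbasis.
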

\begin{Remark}
Since it is shown in \cite[Theorem 2.7]{K} that $D(x)=G(x,o)/C(o)$, the assumption in the above theorem is equivalent to
 the symmetrized Green function vanishing at infinity.
\end{Remark}
By the above theorem, if $\lim_{n \to \infty}D([\omega]_n)=0$\ for\ any\ $\omega \in \Sigma,$ then $p_tu(\omega)=T_tu(\omega)$ for ${\rm HARM}_{T}$-a.e.\ $\omega \in \Sigma,$ where $\{T_t\}_{t>0}$ is the strongly continuous semigroup on $L^2({\Sigma,{\rm HARM}_{T}})$ associated with the Dirichlet form $(\mathcal{E}_{\Sigma},\mathcal{F}_{\Sigma})$ on $L^2(\Sigma,{\rm HARM}_{T}).$ 

We will introduce the heat kernel estimates given in \cite[Proposition 7.5]{K}. First, the following estimate
 is shown without any further assumptions. 
\begin{Proposition}\label{HK}\cite[Proposition 7.5.]{K}\vspace{2mm}\\
(1) For any $\omega \in \Sigma$, and any $t>0$,
\begin{equation}p_t(\omega,\omega)\geq\frac{1}{e}\cdot\frac{1}{{\rm HARM}_{T}(B_D(\omega,t))}.\nonumber
\end{equation}
(2) If $0<t\le D(\omega,\eta)$, then
\begin{equation}p_t(\omega,\eta)\le \frac{t}{D(\omega,\eta){\rm HARM}_{T}(\Sigma([\omega,\eta]))}.\nonumber \end{equation}
\end{Proposition}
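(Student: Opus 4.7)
The plan is to derive both bounds directly from the explicit expression~(\ref{expression}) for the heat kernel, together with two ingredients supplied by Proposition~\ref{D}: the strict monotonicity of $\{D([\omega]_n)\}_{n\geq 0}$ along every ray, and the identification $B_D(\omega,r)=\Sigma([\omega]_n)$ whenever $D([\omega]_n)<r\leq D([\omega]_{n-1})$. I would use the Abel-summed (second) line of~(\ref{expression}) for part~(1) and the original (first) line for part~(2).

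For part~(1), I fix $\omega\in\Sigma$ and $t>0$, and let $n^*\geq 0$ be the unique integer satisfying $D([\omega]_{n^*})<t\leq D([\omega]_{n^*-1})$, using the convention $D([\omega]_{-1})=\infty$; then $\Sigma([\omega]_{n^*})=B_D(\omega,t)$. In the summation-by-parts form of~(\ref{expression}) every summand is nonnegative, so I retain only the indices $0\leq n\leq n^*-1$. For those $n$, $D([\omega]_n)\geq D([\omega]_{n^*-1})\geq t$, hence $\exp(-t/D([\omega]_n))\geq 1/e$. Factoring $1/e$ out and telescoping the consecutive differences $1/{\rm HARM}_T(\Sigma([\omega]_{n+1}))-1/{\rm HARM}_T(\Sigma([\omega]_n))$ against the boundary value ${\rm HARM}_T(\Sigma([\omega]_0))=1$ yields
\[
p_t(\omega,\omega)\ \geq\ 1+\frac{1}{e}\left(\frac{1}{{\rm HARM}_T(\Sigma([\omega]_{n^*}))}-1\right)\ =\ \left(1-\frac{1}{e}\right)+\frac{1}{e\,{\rm HARM}_T(B_D(\omega,t))},
\]
which exceeds $1/(e\,{\rm HARM}_T(B_D(\omega,t)))$ since $1-1/e>0$. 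The edge case $n^*=0$ corresponds to an empty sum together with ${\rm HARM}_T(B_D(\omega,t))=1$, and the bound degenerates to $1\geq 1/e$.

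For part~(2), set $N:=N(\omega,\eta)$ and apply the $\omega\neq\eta$ branch of~(\ref{expression}), which is a finite sum over $0\leq n\leq N$. Since $\Sigma([\omega]_n)\supseteq\Sigma([\omega,\eta])$ for every such $n$, each denominator ${\rm HARM}_T(\Sigma([\omega]_n))$ is at least ${\rm HARM}_T(\Sigma([\omega,\eta]))$; pulling out the smallest factor and telescoping the exponentials gives
\[
p_t(\omega,\eta)\ \leq\ \frac{1-\exp(-t/D(\omega,\eta))}{{\rm HARM}_T(\Sigma([\omega,\eta]))}.
\]
The hypothesis $t\leq D(\omega,\eta)$ together with the elementary inequality $1-e^{-x}\leq x$ valid for $x\in[0,1]$ then delivers the claimed upper bound.

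Neither step presents a serious obstacle. The only tactical point is identifying the truncation index $n^*$ in part~(1) so that, after telescoping, the surviving prefactor matches precisely $1/{\rm HARM}_T(B_D(\omega,t))$; once this is done, both bounds fall out of routine manipulations of the explicit formula.
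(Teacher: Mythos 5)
Your proof is correct. The paper itself gives no argument for this proposition---it is imported verbatim from \cite[Proposition 7.5]{K}---so there is nothing to compare against except Kigami's original proof, which likewise reads both bounds off the eigenfunction-expansion formula (\ref{expression}); your derivation is essentially that argument, carried out cleanly and self-containedly. Part (2) is exactly right: the terms of the finite sum are nonnegative, $\Sigma([\omega]_n)\supseteq\Sigma([\omega,\eta])$ for $n\le N(\omega,\eta)$ lets you pull out the worst denominator, the exponentials telescope to $1-\exp(-t/D(\omega,\eta))$, and $1-e^{-x}\le x$ (which in fact holds for all $x\ge 0$, not just $x\in[0,1]$) finishes it. Part (1) is also right, including the observation that $\mathrm{HARM}_T(\Sigma([\omega]_0))=1$ and that the leftover constant $1-1/e$ is nonnegative. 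The only point worth flagging is your phrase ``the unique integer $n^*$'': such an $n^*$ exists only when $\inf_n D([\omega]_n)<t$. In the remaining degenerate case $t\le D([\omega]_n)$ for all $n$, one has $B_D(\omega,t)=\{\omega\}$ and the same estimate $\exp(-t/D([\omega]_n))\ge 1/e$ applied to \emph{every} term of the series gives $p_t(\omega,\omega)\ge e^{-1}\lim_n \mathrm{HARM}_T(\Sigma([\omega]_n))^{-1}=e^{-1}\mathrm{HARM}_T(B_D(\omega,t))^{-1}$, so the bound survives; under the hypothesis $\lim_n D([\omega]_n)=0$ in force throughout the paper's applications, this case never arises.
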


In \cite[Theorem 7.6.]{K}, the following two-sided estimates of $p_t(\omega,\eta)$ and the estimates
 of mean displacement
 are proved under
 the assumption of the volume doubling property of ${\rm HARM}_{T}$.
 Note that under the volume doubling property of ${\rm HARM}_{T}$,
 we have $\lim_{n \to \infty}D([\omega]_n)=0$ for any $\omega \in \Sigma$ by the condition {\bf(D)}
 in Theorem \ref{vd}.
 In the following, if $f$ and $g$ are two functions defined on a set $U$,
 $f\asymp g$ means there exists $C>0$ such that $C^{-1}f(x)\le g(x)\le Cf(x)$ for all $x\in U$.

\begin{Theorem}\label{HKE}\cite[Theorem 7.6, Corollary 7.9.]{K} Suppose ${\rm HARM}_{T}$ has the volume doubling property
with respect to $D$. Then, the following results hold.\\
(1) The heat kernel $p_t(\omega,\eta)$ is continuous on $(0,\infty)\times\Sigma^2$. Define
\begin{equation}
q_t(\omega,\eta)
=\begin{cases}\dfrac{t}{D(\omega,\eta){\rm HARM}_{T}(\Sigma([\omega,\eta]))}\ \ &${\rm if}$\ \ \ 0<t\le D(\omega,\eta),\\
\dfrac{1}{{\rm HARM}_{T}(B_D(\omega,t))}\ \ \ \ \ \ \ &${\rm if}$\ \ \ t>D(\omega,\eta).
\end{cases}
\end{equation}
Then,
\begin{equation*}
 p_t(\omega,\eta)\asymp q_t(\omega,\eta)\ {\rm on}\ (0,\infty)\times\Sigma^2.
 \end{equation*}
(2) For any $\omega \in \Sigma$ and any $t\in(0,1]$,
\begin{equation*}
E_{\omega}[D(\omega,X_t)^{\gamma}]\asymp\begin{cases}t\ \ \ \ \ \ \ \ \ \ \ \ \ \ \ \ \ ${\rm if}$\ \ \gamma>1,\\
t(|\log t|+1)\ \ \ ${\rm if}$\ \ \gamma=1,\\
t^{\gamma}\ \ \ \ \ \ \ \ \ \ \ \ \ \ \ \ ${\rm if}$\ \ 0<\gamma<1.
\end{cases}
\end{equation*}

\end{Theorem}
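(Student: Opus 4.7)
The plan is to derive both parts of Theorem \ref{HKE} from the explicit series (\ref{expression}) for $p_t$, combined with the two structural consequences of volume doubling: the geometric decay of $\{D([\omega]_n)\}_n$ supplied by condition \textbf{(D)}, and the near-comparability ${\rm HARM}_T(\Sigma([\omega]_n))\asymp{\rm HARM}_T(\Sigma([\omega]_{n+1}))$ coming from doubling together with Proposition \ref{D}. Throughout, fix $\omega,\eta\in\Sigma$ and let $n^\ast$ be the unique index with $D([\omega]_{n^\ast})<t\le D([\omega]_{n^\ast-1})$, so that $B_D(\omega,t)=\Sigma([\omega]_{n^\ast})$ by Proposition \ref{D}.

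For (1), Proposition \ref{HK} already supplies the on-diagonal lower bound and the off-diagonal upper bound, so only the reverse inequalities remain. For the off-diagonal lower bound when $t\le D(\omega,\eta)$, I would keep only the single term at $n=N(\omega,\eta)$ in (\ref{expression}) and apply $1-e^{-x}\ge x/2$ for small $x$; this directly reproduces the claimed $q_t$-form. For the on-diagonal upper bound, I would split the telescoping sum in (\ref{expression}) at $n^\ast$: the tail $n\ge n^\ast$ telescopes and is bounded by $1/{\rm HARM}_T(\Sigma([\omega]_{n^\ast}))=1/{\rm HARM}_T(B_D(\omega,t))$, while on the head $n<n^\ast$ the mean value theorem gives exponential differences of order $t/D([\omega]_n)$, and \textbf{(D)} combined with iterated doubling of ${\rm HARM}_T$ converts the remaining sum into a geometric series of ratio bounded away from $1$, again controlled by $1/{\rm HARM}_T(B_D(\omega,t))$.

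For (2), I integrate the two-sided bound from (1) against ${\rm HARM}_T$ and exploit the ultrametric structure. Decomposing $\Sigma=B_D(\omega,t)\sqcup\bigsqcup_{n=0}^{n^\ast-1}\bigl(\Sigma([\omega]_n)\setminus\Sigma([\omega]_{n+1})\bigr)$ and writing
\begin{align*}
E_\omega[D(\omega,X_t)^\gamma]=\int_\Sigma D(\omega,\eta)^\gamma p_t(\omega,\eta)\,d{\rm HARM}_T(\eta),
\end{align*}
the near part contributes at most $t^\gamma$ because $D\le t$ there, while on each ring with $n<n^\ast$ one has $D(\omega,\eta)=D([\omega]_n)$ and $[\omega,\eta]=[\omega]_n$, giving a ring-contribution $\asymp t\,D([\omega]_n)^{\gamma-1}$ after applying $p_t\asymp q_t$ and the doubling comparison of ${\rm HARM}_T(\Sigma([\omega]_n)\setminus\Sigma([\omega]_{n+1}))$ with ${\rm HARM}_T(\Sigma([\omega]_n))$. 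Summing over $n<n^\ast$ and invoking the geometric decay of $D([\omega]_n)$ from \textbf{(D)}, the sum is dominated by its largest term: the first term $\asymp t$ when $\gamma>1$, the last term $\asymp t^\gamma$ when $\gamma<1$, and a sum of length $n^\ast\asymp\log(1/t)$ giving $t(|\log t|+1)$ when $\gamma=1$. Matching lower bounds follow by keeping a single well-chosen ring.

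The main obstacle I anticipate is the on-diagonal upper bound in (1): the head/tail bookkeeping for the telescoping sum is delicate, and in particular requires iterating doubling a bounded number of times, with the number depending on the constants in \textbf{(D)}, to compare ${\rm HARM}_T(\Sigma([\omega]_n))$ at nearby levels with $D$-balls of comparable radius. Once this comparison is in place, the geometric series from \textbf{(D)} closes the estimate, and part (2) reduces to routine integration of the resulting two-sided bound over the ultrametric decomposition of $\Sigma$.
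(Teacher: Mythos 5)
First, a caveat: this paper does not prove Theorem \ref{HKE} at all --- it is imported verbatim from \cite[Theorem 7.6, Corollary 7.9]{K} --- so your proposal can only be judged against Kigami's argument and on its own internal soundness. Your overall skeleton (use (\ref{expression}), split sums at the scale $n^\ast$ determined by $D([\omega]_{n^\ast})<t\le D([\omega]_{n^\ast-1})$, convert {\bf (D)} and {\bf (EL)} into geometric series, then integrate over ultrametric rings for part (2)) is the right one, and the head of the on-diagonal sum and the ring decomposition in (2) are handled essentially as in the paper's own proofs of Theorem \ref{OND} and Proposition \ref{Dis}. But there are three concrete problems. (i) For the off-diagonal lower bound you keep only the single term $n=N(\omega,\eta)$; this yields a factor $1-D([\omega]_N)/D([\omega]_{N-1})$, and condition {\bf (D)} controls only $m$-step ratios of $D$, so the one-step ratio can be arbitrarily close to $1$. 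You must retain the last $m$ terms and use {\bf (EL)} to compare ${\rm HARM}_T(\Sigma([\omega]_{N-m+1}))$ with ${\rm HARM}_T(\Sigma([\omega]_N))$. (ii) Your head/tail bookkeeping for the on-diagonal upper bound is inverted: it is the head $n<n^\ast$ that telescopes (after bounding $e^{-t/D_n}\le 1$) to $1/{\rm HARM}_T(B_D(\omega,t))$, exactly as in (\ref{til_l}); the tail $n\ge n^\ast$ does \emph{not} telescope to anything finite, since $1/{\rm HARM}_T(\Sigma([\omega]_{n+1}))\to\infty$, and must instead be controlled by pitting the at-most-exponential growth of $1/{\rm HARM}_T(\Sigma([\omega]_{n+1}))$ (from {\bf (EL)}) against the super-exponential decay of $\exp(-t/D([\omega]_n))$ (from {\bf (D)}), as in (\ref{after_l}) and (\ref{Q}).

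(iii) The most serious gap: you never address the lower bound $p_t(\omega,\eta)\ge c/{\rm HARM}_T(B_D(\omega,t))$ in the regime $t>D(\omega,\eta)$, $\omega\neq\eta$, which is half of the claimed two-sided estimate and is the genuinely delicate part. Here one must write $p_t(\omega,\eta)=p_t(\omega,\omega)-\sum_{n>N(\omega,\eta)}(\cdots)$ and show that the subtracted tail does not cancel the on-diagonal term; a crude bound on the first subtracted term already produces a constant $c_1^{-1}>1$ times the on-diagonal lower bound, so this requires real work (compare the near-diagonal lower bound (\ref{BBBB}) that the paper has to establish by hand in the non-doubling setting). This same lower bound is what your ``single well-chosen ring'' argument in part (2) secretly relies on, so the matching lower bounds there are also unproven as written; note also that for $\gamma=1$ a single ring gives only $t$, and you need all $\asymp\log(1/t)$ rings together with a one-step lower bound $D([\omega]_{n+1})\ge cD([\omega]_n)$ to reach $t(|\log t|+1)$. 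Finally, the continuity assertion in (1) is not addressed at all.
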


\section{Electric networks on Galton-Watson trees}

\subsection{The asymptotics of the effective resistance along infinite rays}

In the previous section, we have presented results on the construction and properties of jump processes
 on the boundaries of  deterministic trees studied in \cite{K}. In this section, we will consider random trees instead of
 deterministic trees, in particular Galton-Watson trees. First we will introduce the preliminary results
 of electric networks and corresponding random walks on Galton Watson trees which will
 be important for our study.\par
 Let $\mathcal{T}$ be a rooted Galton-Watson tree with offspring distribution $\{p_k\}_{k\geq0}$.
 Note that 
 \begin{equation*}
 \left\{\xi_n\}_{n\geq0}=\{\#\{v\in\mathcal{T}: h(v)=n\}\right\}_{n\geq0}
 \end{equation*}
 is a Galton-Watson process with offspring distribution $\{p_k\}_{k\geq0}$.
 Throughout this paper, we will assume the following condition on $\{p_k\}_{k\geq0}$.
\begin{equation}\label{off}
\begin{cases}
p_0=0,\ p_1\neq1,\\
 1<m<\infty,\ \ {\rm where}\ m:=\sum_{k=0}^{\infty}kp_k.
\end{cases}
\end{equation}
Recall that under the above assumptions, $\mathcal{T}$ can be regarded as a $\mathbb{T}$-valued random variable,
 where $\mathbb{T}:=\{T\ ;\ T\ {\it is\ an\ infinite\ rooted\ tree}\}$,
 and we will denote its distribution by $\mathbb{P}_{{\rm GW}}$.
Moreover, $\mathcal{T}(v)$ is an infinite tree for all $v\in\mathcal{T}$
 with $\mathbb{P}_{{\rm GW}}$-probability 1, so the extinction event has $\mathbb{P}_{{\rm GW}}$-probability 0.\par
 For an infinite tree $T$ and $\lambda>0$, we consider the $\lambda$-{\it biased\ random\ walk}
 $\{Z_n^{\lambda}\}_{n\geq0}$ on $T$
 under the probability measure $P_{\lambda}^{T}$.
 Let the initial state $Z_0$ to be $o$ unless otherwise stated. This is equivalent to considering the
 weighted graph $(T,C^{\lambda}(T))$, where the conductance of an edge connecting vertices at level $n$ and $n+1$
 is $\lambda^{-n}$. For $0<\lambda<m,$ it is proved in \cite{L} that
 the $\lambda$-biased random walk on the Galton-Watson tree
 $\mathcal{T}$ is transient with $\mathbb{P}_{{\rm GW}}$-probability 1.
 Thus for $0<\lambda<m$, we have the harmonic measure of
 $\{Z_n^{\lambda}\}_{n\geq0}$,
 which is a probability measure on $\Sigma$. 
 In what follows,
 when the $\lambda$-biased random walk on an infinite rooted tree $T$ is transient, we will denote the harmonic
 measure of the random walk staring from a vertex $x$ by ${\rm HARM}^{\lambda}_{T,x}$.
 Again, we set ${\rm HARM}^{\lambda}_{T}={\rm HARM}^{\lambda}_{T,o}$. 
 Note that by the condition {\bf (EL)} in Theorem \ref{vd}, ${\rm HARM}_{\mathcal{T}}^{\lambda}$ does not have
 the volume doubling
 property with respect to the intrinsic metric $D$ with $\mathbb{P}_{{\rm GW}}$-probability 1
 when either $p_1>0$ or $\sup\{n: p_n>0\}=+\infty.$
 Moreover, by Remark \ref{vdd}, a similar claim holds for the metric $d$.  
 Since the purpose of the paper is to obtain estimates of the heat kernels without the volume doubling property
 of the harmonic measure, estimates of the harmonic measure and the effective resistance will be important.
 The following estimates of the harmonic measure shown in \cite[Theorem 1.1]{LPP1} \cite[Theorem 5.1]{LPP2}
 play an important role in what follows.
\begin{Theorem}\label{HM}\ For $0<\lambda<m,$ the following results hold.\\
(1) There exists a deterministic constant $0<\beta_\lambda<\log m$ such that 
 $\beta_{\lambda}=\dim{{\rm HARM}_{\mathcal{T}}^{\lambda}}$ $\mathbb{P}_{{\rm GW}}$-$a.s.$\\ 
(2) Define
\begin{equation}
\Sigma_{1}:=\{\omega \in \Sigma: -\lim_{n \to \infty}\frac{\log{{\rm HARM}_{\mathcal{T}}^{\lambda}(B_d(\omega,e^{-n}))}}{n}=-\lim_{n \to \infty}\frac{\log{{\rm HARM}_{\mathcal{T}}^{\lambda}(\Sigma([\omega]_n))}}{n}=\beta_{\lambda}\},\nonumber
\end{equation}
then ${\rm HARM}_{\mathcal{T}}^{\lambda}(\Sigma_{1})=1$\ $\mathbb{P}_{{\rm GW}}$-a.s.
\end{Theorem}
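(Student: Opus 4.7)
The plan is to follow an ergodic-theoretic ``harmonic flow'' argument on the space of marked trees. I would work on the augmented space $\mathbb{T}^{*} := \{(T,\omega) : T\in\mathbb{T},\ \omega\in\Sigma\}$ of infinite rooted trees carrying a distinguished boundary ray, equipped with the annealed measure $\mathbb{Q}(dT, d\omega) := \mathbb{P}_{\rm GW}(dT)\,{\rm HARM}^{\lambda}_{T}(d\omega)$. The shift $S(T,\omega) := (T^{\omega_1}, \sigma\omega)$, where $T^{\omega_1}$ is the descendant subtree rerooted at the first vertex $\omega_1$ of $\omega$ and $\sigma\omega$ drops the root, drives the dynamics. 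The first main step is to produce an $S$-invariant probability measure $\widehat{\mathbb{Q}}$ on $\mathbb{T}^{*}$ that is equivalent to $\mathbb{Q}$, with an explicit Radon--Nikodym derivative expressible in terms of the one-step escape probabilities of the $\lambda$-biased walk.

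Given $\widehat{\mathbb{Q}}$ and its ergodicity (which combines the branching independence under $\mathbb{P}_{\rm GW}$ with a tail $0$--$1$ law along the ray), the local dimension computation reduces to a Birkhoff sum. Setting $\varphi(T,\omega) := -\log {\rm HARM}^{\lambda}_{T}(\Sigma(\omega_1))$, the multiplicative factorization of the harmonic measure along $\omega$ gives
\begin{equation*}
-\log {\rm HARM}^{\lambda}_{T}\bigl(\Sigma([\omega]_n)\bigr) \;=\; \sum_{k=0}^{n-1} \varphi\bigl(S^{k}(T,\omega)\bigr).
\end{equation*}
Dividing by $n$ and applying Birkhoff's ergodic theorem yields, $\widehat{\mathbb{Q}}$-a.s.\ and hence $\mathbb{Q}$-a.s.\ by equivalence, that $-n^{-1}\log {\rm HARM}^{\lambda}_{T}(\Sigma([\omega]_n))$ converges to the deterministic constant $\beta_{\lambda} := \int \varphi \, d\widehat{\mathbb{Q}}$. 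From Definition \ref{def}(6), $B_d(\omega,e^{-n}) = \Sigma([\omega]_{n+1})$, so the ball-based limit in the definition of $\Sigma_{1}$ equals the cylinder-based one up to the harmless factor $(n+1)/n$, and both converge to $\beta_{\lambda}$. This proves part (2); part (1) then follows from the standard Billingsley-type characterization of the Hausdorff dimension of a measure on an ultrametric space, which identifies $\dim {\rm HARM}^{\lambda}_{\mathcal{T}}$ with the $\mu$-a.e.\ local dimension.

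Positivity $\beta_{\lambda} > 0$ comes from $\varphi > 0$ on a set of positive $\widehat{\mathbb{Q}}$-measure, which is guaranteed by $p_{1} \neq 1$: with positive probability $\omega_{1}$ has at least one sibling in $T$, forcing ${\rm HARM}^{\lambda}_{T}(\Sigma(\omega_{1})) < 1$. The strict upper bound $\beta_{\lambda} < \log m$ follows by comparing with the branching measure: generation $n$ of the Galton--Watson tree has exponential growth rate $\log m$, so the uniform (branching) measure has dimension exponent $\log m$, and a Jensen-type inequality applied to the Radon--Nikodym derivative of ${\rm HARM}^{\lambda}_{T}$ against this reference measure yields a strict inequality whenever the exit distribution is not asymptotically uniform across siblings, which fails here because the $\lambda$-bias and the random offspring sizes break uniformity.

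The main obstacle is the construction of the stationary measure $\widehat{\mathbb{Q}}$ and the verification of its ergodicity. Because ${\rm HARM}^{\lambda}_{T}$ on a Galton--Watson tree does not factor as a product across generations (unlike the branching measure), no direct stationary extension of $\mathbb{Q}$ is available; the Lyons--Pemantle--Peres approach circumvents this by introducing explicit flow rules under which an auxiliary Markov chain on marked trees is reversible, and $\widehat{\mathbb{Q}}$ then emerges as its stationary distribution. Everything downstream---the transfer from cylinders to balls, the Birkhoff step, and the quantitative bounds on $\beta_{\lambda}$---becomes comparatively routine once this stationary/ergodic framework is in place.
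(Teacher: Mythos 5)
The paper does not prove Theorem \ref{HM} at all: it is quoted directly from \cite[Theorem 1.1]{LPP1} and \cite[Theorem 5.1]{LPP2}, and your sketch is essentially a reconstruction of the Lyons--Pemantle--Peres argument underlying those results, which this paper also imports elsewhere (the ${\rm HARM}^{\lambda}$-stationary measure $\mu^{\lambda}_{{\rm HARM}}$ of Proposition \ref{inv}, the multiplicative factorization of the harmonic flow along a ray, and the Birkhoff step, exactly as used again in Proposition \ref{Resistance} and Corollary \ref{>}). So your route coincides with the cited source's; the one caution is that the two points you defer as ``comparatively routine'' --- the construction and ergodicity of the stationary measure equivalent to $\mathbb{P}_{{\rm GW}}$, and the strict dimension drop $\beta_{\lambda}<\log m$ (which needs the entropy-maximization/flow-rule comparison argument, not just a generic Jensen inequality) --- are precisely the substantive content of \cite{LPP1,LPP2} rather than routine consequences.
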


Next, we will investigate the behavior of the effective resistance.
 Before giving the statement, we need some preparations. In \cite{LPP1,LPP2}, Markov chains on
  ``the space of trees'' are studied and, in particular, Markov chains associated with harmonic flows are utilized
 to study the behavior of harmonic measures of $\lambda$-biased random walks on Galton-Watson trees.

\begin{Definition} 
 (1) For a tree $T\in\mathbb{T}$ and $v\in T$, define $R^{\lambda}(T,v)=R^{\lambda}(v)$ as the effective resistance of
 $(T(v),C^{\lambda}|_{T(v)})$ from the vertex $v$ to infinity. Define the effective conductance $EC^{\lambda}(T(v))$ by
 $EC^{\lambda}(T(v)):=(R^{\lambda}(v))^{-1}$. \\
 (2) For a tree $T\in\mathbb{T}$, a nonnegative function $\theta$ on $T$ is called a flow on $T$ if for all $x\in T$,
 $\theta(x)=\sum_{y:\pi(y)=x}\theta(y)$. Note that flows on $T$ are in one-to-one correspondence
 with positive finite Borel measures $\mu$ on $\Sigma(T)$ by
 $\theta(x)=\mu(\Sigma(x))$.\\
 (3) Define $\mathbb{F}$:=\{$\theta$; $\theta$ is a flow on $T$ for some $T\in\mathbb{T}$\}.
 A Borel map $\Theta:\mathbb{T}\rightarrow\mathbb{F}$ is called a flow rule if
 for any $T\in\mathbb{T}$, $\Theta(T)$ is a flow on $T$,
 and for any $x\in T$ with $|x|=1$ and $\Theta(T)(x)>0$, we have $\Theta(T)(y)/\Theta(T)(x)=\Theta(T(x))(y)$ for $y\in T(x)$.
\end{Definition}
For a given flow rule $\Theta$, we can associate a Markov chain on $\mathbb{T}$ in the following way:
 for a flow rule $\Theta$, define transition probabilities $p_{\Theta}$ by
 $p_{\Theta}(T,T(x)):=\Theta(T)(x)$ for $T\in\mathbb{T}$ and $x\in T$ with $|x|=1$.
 A path of this Markov chain on $\mathbb{T}$ is naturally identified with an element of
 $\mathbb{T}_{{\rm ray}}:=\{(T,\omega);\ T\in\mathbb{T},\ \omega\in\Sigma(T)\}$. Define the shift operator $S$ on
  $\mathbb{T}_{{\rm ray}}$ by 
  \begin{align}\label{Sdefn}
  S\big((T,\omega)\big):=(T([\omega]_1),\tau(\omega)),
  \end{align}
 where $\tau$ is the shift operator on $\Sigma$.
 A measure $\mu$ on $\mathbb{T}$ is called {\it $\Theta$-stationary} if for any Borel subset $A\subseteq\mathbb{T}$,
 \begin{eqnarray*}
\mu(A)=\int\sum_{T'\in A}p_{\Theta}(T,T')d\mu(T')=\int\sum_{|x|=1,T(x)\in A}\Theta(T)(x)d\mu(T).
\end{eqnarray*}
When the $\lambda$-biased random walk on $T\in\mathbb{T}$ is transient, its path converges almost surely to
 a random element of $\Sigma$, and its distribution is ${\rm HARM}^{\lambda}_T$.
 Define ${\rm HARM}^{\lambda}:\mathbb{T}\rightarrow\mathbb{F}$ by ${\rm HARM}^{\lambda}(T)(x):={\rm HARM}^{\lambda}_T(\Sigma(x))$
 for $x\in T$ with $|x|=1$. It is obvious that this defines a flow rule. In the rest of this section, we will use the following
 results obtained by \cite{Lin} and \cite{Rou} independently, which give the explicit expression of the
 ${\rm HARM}^{\lambda}$-stationary probability measure.
\begin{Definition}
For a tree $T\in\mathbb{T}$ rooted at $o$, define $\tilde{T}$ as the tree obtained by drawing an extra edge between
 $o$ and an extra adjacent vertex $\tilde{o}$, which is the root of $\tilde{T}$. Let
 \begin{align*}
 \alpha^{\lambda}(T)&:=P_{\lambda}^{\tilde{T}}(Z_n^{\lambda}\neq\tilde{o}\ {\it for\ all}\ n\geq1\ |\ Z_0^{\lambda}=o).
 \end{align*}
 be the probability that the $\lambda$-biased random walk on $\tilde{T}$ starting at $o$ never visits $\tilde{o}$.
\end{Definition}
\begin{Proposition}\label{inv}
For $0<\lambda<m$, define $\theta^{\lambda}:\mathbb{T}\rightarrow\mathbb{R}$ and $h^{\lambda}\in\mathbb{R}_{\geq0}$
 by
\begin{align*}
\theta^{\lambda}(T)&:=\int\frac{\alpha^{\lambda}(T')EC^{\lambda}(T)}
{\lambda-1+\alpha^{\lambda}(T')+EC^{\lambda}(T)}d\mathbb{P}_{{\rm GW}}(T'),\\
h^{\lambda}&:=\int_{\mathbb{T}}\theta^{\lambda}(T)d\mathbb{P}_{{\rm GW}}(T).
\end{align*}
Then, the measure
 $d\mu^{\lambda}_{{\rm HARM}}(T):=(h^{\lambda})^{-1}\cdot\theta^{\lambda}(T)d\mathbb{P}_{{\rm GW}}(T)$
 is the unique 
${\rm HARM}^{\lambda}$-stationary probability measure which is mutually absolutely continuous
 with respect to $\mathbb{P}_{{\rm GW}}$.
 Moreover, the ${\rm HARM}^{\lambda}$-Markov chain with initial distribution $\mu_{{\rm HARM}}^{\lambda}$ is ergodic.
\end{Proposition}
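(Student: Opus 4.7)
The plan is to verify four properties of $\mu^{\lambda}_{{\rm HARM}}$: ${\rm HARM}^{\lambda}$-stationarity, being a probability measure mutually absolutely continuous with $\mathbb{P}_{{\rm GW}}$, ergodicity of the associated Markov chain, and uniqueness (which will follow from the others). The computational core is stationarity, so I first derive the electric-network identities used throughout. For an infinite rooted tree $T$ whose root $o$ has children $x_1,\dots,x_k$, a first-step analysis of the $\lambda$-biased walk combined with the strong Markov property at $o$ yields
\begin{align*}
EC^{\lambda}(T) &= \sum_{j=1}^k \alpha^{\lambda}(T(x_j)), \qquad \alpha^{\lambda}(T) = \frac{EC^{\lambda}(T)}{\lambda + EC^{\lambda}(T)}, \\
{\rm HARM}^{\lambda}_T(\Sigma(x_i)) &= \frac{\alpha^{\lambda}(T(x_i))}{\sum_j \alpha^{\lambda}(T(x_j))}.
\end{align*}
The first and third are obtained by writing $P_o(\tau_o^+ = \infty) = EC^{\lambda}(T)/C(o)$ and decomposing by the first step ($o \to x_j$ with probability $1/k$, then ``stay in $T(x_j)$ forever'' with probability $\alpha^{\lambda}(T(x_j))$); the second is the analogous calculation on $\tilde T$.

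For stationarity, I would test the identity $\int \varphi\,\theta^{\lambda}\,d\mathbb{P}_{{\rm GW}} = \int \sum_{|x|=1} {\rm HARM}^{\lambda}_T(\Sigma(x))\,\varphi(T(x))\,\theta^{\lambda}(T)\,d\mathbb{P}_{{\rm GW}}(T)$ against an arbitrary bounded Borel $\varphi$. Unfolding the right-hand side via the Galton-Watson decomposition (conditioning on the offspring count $k$, with subtrees $T_1,\dots,T_k$ i.i.d.\ $\mathbb{P}_{{\rm GW}}$), exchanging the sum over children with the i.i.d.\ expectation to produce the size-biased factor $kp_k$, and substituting the identities above, reduces the claim to the pointwise equation (for $\mathbb{P}_{{\rm GW}}$-a.e.\ $T_1$)
\begin{align*}
\int \frac{\alpha^{\lambda}(S)\,EC^{\lambda}(T_1)}{\lambda - 1 + \alpha^{\lambda}(S) + EC^{\lambda}(T_1)}\,d\mathbb{P}_{{\rm GW}}(S) = \alpha^{\lambda}(T_1) \sum_{k\ge 1} k p_k \int\!\!\int \frac{\alpha^{\lambda}(S)\,d\mathbb{P}_{{\rm GW}}(S)\,d\mathbb{P}_{{\rm GW}}^{\otimes(k-1)}}{\lambda - 1 + \alpha^{\lambda}(S) + \alpha^{\lambda}(T_1) + \sum_{j=2}^k \alpha^{\lambda}(T_j)}.
\end{align*}
Writing $e = EC^{\lambda}(T_1)$ and $a = \alpha^{\lambda}(T_1) = e/(\lambda+e)$, the LHS depends on $T_1$ only through $e$ while the RHS only through $a$. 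After substituting $e = \lambda a/(1-a)$ and expanding both sides as integrals of rational functions in $\alpha^{\lambda}(S)$ and the $\alpha^{\lambda}(T_j)$, the equation reduces to an algebraic identity that can be verified by direct manipulation using the $\alpha^{\lambda}$-recursion under $\mathbb{P}_{{\rm GW}}$.

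For the remaining claims, the integrand defining $\theta^{\lambda}$ is positive and integrable (for $\lambda \ge 1$ it is bounded above by $\alpha^{\lambda}(T')\le 1$; for $\lambda < 1$ one checks the denominator is bounded away from zero $\mathbb{P}_{{\rm GW}}^{\otimes 2}$-a.s.~using positivity of $EC^{\lambda}$ under Assumption \ref{assumption}), so $0 < h^{\lambda} < \infty$ and $d\mu^{\lambda}_{{\rm HARM}}/d\mathbb{P}_{{\rm GW}} = \theta^{\lambda}/h^{\lambda}$ is positive, giving a probability measure mutually absolutely continuous with $\mathbb{P}_{{\rm GW}}$. For ergodicity I would identify the ${\rm HARM}^{\lambda}$-chain with the shift $S$ on $\mathbb{T}_{{\rm ray}}$ under the joint measure $d\mu^{\lambda}_{{\rm HARM}}(T)\,d{\rm HARM}^{\lambda}_T(\omega)$ and conclude triviality of the invariant $\sigma$-algebra from tail triviality of ${\rm HARM}^{\lambda}_T$ (a 0-1 law following from the exchangeability of the subtrees at each level under $\mathbb{P}_{{\rm GW}}$). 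Uniqueness is then standard: for any other stationary probability $\mu' \ll \mathbb{P}_{{\rm GW}}$, the Radon-Nikodym derivative $d\mu'/d\mu^{\lambda}_{{\rm HARM}}$ is chain-invariant and hence $\mu^{\lambda}_{{\rm HARM}}$-a.s.\ constant. The main obstacle is the algebraic identity in the stationarity step, which encodes a delicate matching between the branching expansion of $EC^{\lambda}(T_1)$ on the left and the size-biased ``sibling'' sum on the right.
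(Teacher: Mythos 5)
The paper does not actually prove this proposition: it cites \cite[Lemma 5]{Lin} and \cite[Theorem 4.1]{Rou} for the formula and \cite{LPP1,LPP2} for ergodicity. Your proposal is therefore a genuinely different route --- a self-contained verification --- and its computational core is sound. The electric-network identities and the reduction of stationarity to the displayed pointwise identity are correct, and your claim that the identity then closes by direct manipulation is in fact true, though you should carry it out since it is the whole point: substituting the recursion $\alpha^\lambda(T')=s_K/(\lambda+s_K)$ with $s_k=\sum_{j=1}^k\alpha^\lambda(T'_j)$ into the left-hand side and clearing the denominator by $\lambda+s_k$ (resp.\ by $\lambda+e$ on the right-hand side, where $e=EC^\lambda(T_1)$ and $\alpha^\lambda(T_1)=e/(\lambda+e)$), both sides become
\[
\sum_k k\,p_k\,E\!\left[\frac{a\,e}{\lambda\bigl(\lambda-1+a+\sigma_{k-1}+e\bigr)+e\bigl(a+\sigma_{k-1}\bigr)}\right],
\]
with $a$ and the $k-1$ summands of $\sigma_{k-1}$ i.i.d.\ copies of $\alpha^\lambda$ under $\mathbb{P}_{{\rm GW}}$; the factor $k$ on the left comes from splitting the numerator $s_k=\sum_i a_i$ by exchangeability. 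What your route buys is transparency (one sees exactly where the size-biasing and the $\lambda-1$ come from); what the paper's citation buys is that Lin and Rousselin also handle the integrability details and uniqueness in full.

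The one step I would not accept as written is ergodicity. ``Tail triviality of ${\rm HARM}^\lambda_T$ following from exchangeability of the subtrees at each level'' is not an argument: for a fixed realization $T$ the subtrees are particular trees, not exchangeable random objects, so there is no quenched symmetry to exploit; and even granting tail triviality of ${\rm HARM}^\lambda_T$ for a.e.\ $T$, that only shows each $S$-invariant event $A\subseteq\mathbb{T}_{\rm ray}$ has section $A_T$ of ${\rm HARM}^\lambda_T$-measure $0$ or $1$, after which one still needs a zero--one law on $\mathbb{T}$ itself (the inherited-property argument of \cite{LPP1}) to conclude that $\mu^\lambda_{\rm HARM}\times{\rm HARM}^\lambda_T(A)\in\{0,1\}$. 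Either import the ergodicity statement from \cite{LPP1,LPP2}, as the paper does, or supply both halves of that argument; your uniqueness deduction is fine once ergodicity is in hand.
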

\begin{proof}
The explicit formula for $\mu_{{\rm HARM}}^{\lambda}$ is given in \cite[Lemma 5]{Lin} and \cite[Theorem 4.1]{Rou}.
 The ergodicity of the ${\rm HARM}^{\lambda}$-Markov chain is proved in \cite{LPP1,LPP2}.
\end{proof}

The following result will be important for the proof of our main result.
\begin{Proposition}\label{Resistance}Under the assumption (\ref{off}),
 for $0<\lambda<m$, the following holds $\mathbb{P}_{{\rm GW}}$-a.s.
\begin{equation}\label{erg}
\lim_{n\to\infty}\frac{1}{n}\log R^{\lambda}([\omega]_n)=
\log\lambda,\ \ {\rm HARM}_{\mathcal{T}}^{\lambda}\mathchar`-a.e.\ \omega.
\end{equation}
 In other words, if define
\begin{equation*}
\Sigma_{2}:=\{\omega\in\Sigma; \lim_{n\to\infty}\frac{1}{n}\log R^{\lambda}([\omega]_n)=\log\lambda\},
\end{equation*}
then ${\rm HARM}_{\mathcal{T}}^{\lambda}(\Sigma_{2})=1$\ $\mathbb{P}_{{\rm GW}}$-a.s.
\end{Proposition}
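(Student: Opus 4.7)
The plan is to recast the limit as an ergodic-theoretic statement on the space of trees and apply it to the Markov chain of Proposition \ref{inv}. The starting point is the scaling identity
\begin{align*}
R^{\lambda}(\mathcal{T},[\omega]_n) = \lambda^n\, R^{\lambda}\bigl(\mathcal{T}([\omega]_n)\bigr),
\end{align*}
where the right-hand side denotes the effective resistance to infinity in the freshly built $\lambda$-biased network on $\mathcal{T}([\omega]_n)$ with root $[\omega]_n$. This is immediate from the observation that an edge between depth $k$ and depth $k+1$ inside $\mathcal{T}([\omega]_n)$ carries conductance $\lambda^{-(n+k)}$ in the original network, a factor $\lambda^{-n}$ smaller than in the fresh one, and rescaling all conductances by a common factor rescales the effective resistance by its reciprocal. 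Taking logarithms, it therefore suffices to prove $\frac{1}{n}\log R^{\lambda}(\mathcal{T}([\omega]_n)) \to 0$.

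Set $f(T,\omega) := \log R^{\lambda}(T)$ on $\mathbb{T}_{{\rm ray}}$, so that $\log R^{\lambda}(\mathcal{T}([\omega]_n)) = f(S^n(\mathcal{T},\omega))$ by the definition (\ref{Sdefn}) of $S$. Proposition \ref{inv} implies that the probability measure $d\nu(T,\omega) := d\mu^{\lambda}_{{\rm HARM}}(T)\, d{\rm HARM}^{\lambda}_T(\omega)$ is $S$-invariant and ergodic on $\mathbb{T}_{{\rm ray}}$. Assuming $f \in L^1(\nu)$, the stationarity of $\nu$ yields $\sum_n \nu(|f\circ S^n|>n\varepsilon) = \sum_n \nu(|f|>n\varepsilon) < \infty$ for every $\varepsilon>0$, so the first Borel--Cantelli lemma gives $f(S^n)/n \to 0$ $\nu$-a.s. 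Since $\mu^{\lambda}_{{\rm HARM}}$ and $\mathbb{P}_{{\rm GW}}$ are mutually absolutely continuous, this delivers the asserted limit for ${\rm HARM}^{\lambda}_{\mathcal{T}}$-a.e.\ $\omega$, $\mathbb{P}_{{\rm GW}}$-a.s.

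The main obstacle I anticipate is verifying the integrability $\int_{\mathbb{T}}|\log R^{\lambda}(T)|\,d\mu^{\lambda}_{{\rm HARM}}(T) < \infty$. By Proposition \ref{inv}, the density $d\mu^{\lambda}_{{\rm HARM}}/d\mathbb{P}_{{\rm GW}}$ is proportional to $\theta^{\lambda}(T)$, which can be shown to be uniformly bounded above (for $\lambda\geq 1$ the integrand in the definition of $\theta^{\lambda}$ is at most $1$, and for $\lambda < 1$ the denominator is bounded below using transience of the biased walk). The negative part of $\log R^{\lambda}$ is then controlled by the elementary parallel estimate $R^{\lambda}(T)\geq 1/\xi_1(T)$, giving $(\log R^{\lambda})_- \leq \log\xi_1$ with $E_{\mathbb{P}_{{\rm GW}}}[\log\xi_1]\leq \log m<\infty$ by Jensen's inequality. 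The positive part requires tail bounds on $R^{\lambda}(T)$ under $\mathbb{P}_{{\rm GW}}$; these can be extracted from the recursive distributional identity
\begin{align*}
EC^{\lambda}(\mathcal{T}) = \sum_{i=1}^{\xi_1}\frac{EC^{\lambda}(\mathcal{T}_i)}{\lambda+EC^{\lambda}(\mathcal{T}_i)},
\end{align*}
combined with the strict transience condition $\lambda<m$, which together ensure that $R^{\lambda}$ has moments sufficient to absorb the logarithm. Once this integrability step is in place, the ergodic-theoretic argument above concludes the proof.
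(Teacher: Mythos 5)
Your overall architecture coincides with the paper's: both arguments rest on Proposition \ref{inv} (the ${\rm HARM}^{\lambda}$-stationary measure $\mu^{\lambda}_{{\rm HARM}}\sim\mathbb{P}_{{\rm GW}}$, which makes $d\mu^{\lambda}_{{\rm HARM}}(T)\,d{\rm HARM}^{\lambda}_T(\omega)$ invariant under the shift $S$ of (\ref{Sdefn})), and both reduce the problem to the integrability of $\log R^{\lambda}(o)$ with respect to $\mu^{\lambda}_{{\rm HARM}}$, hence (since $0<\theta^{\lambda}<1$) with respect to $\mathbb{P}_{{\rm GW}}$. The mechanism for extracting the limit differs: the paper applies Birkhoff's ergodic theorem to the cocycle $f^{\lambda}(T,\omega)=\log\left(R^{\lambda}([\omega]_1)/R^{\lambda}(o)\right)$, whose Birkhoff sums telescope to $\log R^{\lambda}([\omega]_n)-\log R^{\lambda}(o)$, and then evaluates $\int f^{\lambda}\,d\nu=\log\lambda$ by stationarity; you instead peel off the deterministic factor $\lambda^{n}$ by conductance rescaling and kill the remainder $\log R^{\lambda}(\mathcal{T}([\omega]_n))=f\circ S^{n}$ using only stationarity plus Borel--Cantelli. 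Your route is slightly more economical (no ergodicity or ergodic theorem needed for this proposition), while the paper's route identifies the limit as an explicit integral, a formula that is reused immediately in the proof of Corollary \ref{>}. Your scaling identity, the identification $f\circ S^n=\log R^{\lambda}(\mathcal{T}([\omega]_n))$, the transfer of the a.s.\ statement via mutual absolute continuity, the boundedness of $\theta^{\lambda}$, and the lower-tail control via $R^{\lambda}(o)\ge 1/\xi_1$ and Jensen are all correct.

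The one substantive gap is the upper-tail estimate for $R^{\lambda}(o)$ under $\mathbb{P}_{{\rm GW}}$, which you flag as the obstacle but do not carry out; this is exactly where the technical work of the paper lies, and it occupies a separate statement (Lemma \ref{rrr}(2)(3)). ``The recursive identity plus $\lambda<m$ ensures enough moments'' is true but not automatic: the paper iterates the distributional recursion for $\alpha^{\lambda}(T)=1/(1+\lambda R^{\lambda}(o))$ to get $\mathbb{P}_{{\rm GW}}(R^{\lambda}(o)\ge n)\le G^{(l)}(1/2)$ for $l$ of order $\log_{\lambda}n$, and then invokes the Athreya--Ney asymptotics $p_1^{-n}G^{(n)}(s)\to Q(s)$ for iterates of the offspring generating function to convert this into a polynomial tail $cn^{\log p_1/\log\lambda}$ when $1<\lambda<m$ (exponential when $\lambda\le1$); the case $p_1=0$ requires an extra stochastic-domination argument comparing with a modified offspring law $\{q^{\varepsilon}_k\}$. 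Any such polynomial tail does suffice to integrate the logarithm, so your plan closes once this lemma is supplied, but as written the proposal defers rather than proves the decisive estimate.
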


 Before giving the proof of Proposition \ref{Resistance}, we prove the following moment estimates of the effective resistance,
 which are of independent interest.

\begin{Lemma} \label{rrr}(1) For $0<\lambda<1$, $R^{\lambda}(o)\leq\frac{1}{1-\lambda}$. In particular,
 for $x\in\mathcal{T}_k$ we have $R^{\lambda}(x)\leq \lambda^{k}\cdot\frac{1}{1-\lambda}\ \mathbb{P}_{{\rm GW}}$-$a.s.$\\
(2) For $\lambda=1$, there exists a constant $b>0$ such that
 $\mathbb{P}_{{\rm GW}}(R^1(o)>n)\leq e^{-bn}$ for any $n\in\mathbb{N}$.\\
(3) For $1<\lambda<m$, if $p_1>0, $ there exists a constant $c>0$ such that
 $\mathbb{P}_{{\rm GW}}(R^{\lambda}(o)>n)\leq cn^{\log p_1/\log\lambda}$ for any $n\in\mathbb{N}$.
 If $p_1=0$, for any $\alpha>0$ there exists a constant $c_{\alpha}>0$ such that
 $\mathbb{P}_{{\rm GW}}(R^{\lambda}(o)>n)\leq c_{\alpha}n^{-\alpha}$ for any $n\in\mathbb{N}$.\\
(4) For $0<\lambda<m$, if\ $\sum_{k\geq1}k^{\alpha}p_k<\infty$ for $\alpha\geq1$,
 we have $\mathbb{E}_{{\rm GW}}[(R^{\lambda}(o))^{-\alpha}]<\infty$.
\end{Lemma}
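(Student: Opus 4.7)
For (1), by Rayleigh monotonicity, removing every edge of $\mathcal{T}$ outside a single infinite ray from $o$ can only increase $R^\lambda(o)$, and along a ray the edge resistances form the geometric series $\sum_{n\geq 0}\lambda^n = (1-\lambda)^{-1}$. The same argument in $\mathcal{T}(x)$ for $x\in\mathcal{T}_k$ starts at level $k$ and gives $R^\lambda(x)\leq \lambda^k/(1-\lambda)$. For (4), the root has exactly $\xi_1$ adjacent edges, each of conductance $\lambda^0 = 1$, so by the parallel bound $EC^\lambda(o)\leq\xi_1$, whence $(R^\lambda(o))^{-\alpha}\leq \xi_1^\alpha$ and
\[
\mathbb{E}_{\rm GW}[(R^\lambda(o))^{-\alpha}] \leq \mathbb{E}_{\rm GW}[\xi_1^\alpha] = \sum_{k\geq 1} k^\alpha p_k < \infty.
\]

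\textbf{The key recursion for (2) and (3).} Condition on the first generation: each subtree $\mathcal{T}(v)$ (for $v$ a child of $o$) is an independent $\lambda$-biased Galton-Watson network whose edge resistances are multiplied by $\lambda$ relative to those of $\mathcal{T}$. The series-parallel law then yields the distributional identity
\[
R^\lambda(o) \stackrel{d}{=} \Big(\sum_{i=1}^{N}\frac{1}{1+\lambda R_i}\Big)^{-1},
\]
with $N\sim\{p_k\}$ independent of the i.i.d.\ copies $R_1, R_2, \ldots$ of $R^\lambda(o)$. Writing $Q(t):=\mathbb{P}_{\rm GW}(R^\lambda(o)>t)$ and $f(s):=\sum_k p_k s^k$, the event $\{R^\lambda(o)>t\}$ forces each summand to be $<1/t$, i.e.\ $R_i>(t-1)/\lambda$ for all $i\leq N$. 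Independence then yields the key tail inequality
\[
Q(t) \leq \mathbb{E}\bigl[Q\bigl((t-1)/\lambda\bigr)^N\bigr] = f\bigl(Q\bigl((t-1)/\lambda\bigr)\bigr).
\]
The standing hypotheses force $p_1<1$, hence $f(s)<s$ on $(0,1)$, while a.s.\ transience \cite{L} ensures $Q(t)\to 0$; in particular the iteration may be seeded at some $t_0$ with $Q(t_0)<1$.

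\textbf{Completion of (2) and (3).} For (2) ($\lambda=1$) the inequality becomes $Q(n)\leq f(Q(n-1))$. Since $f(s) = p_1 s + O(s^2)$ near $0$, once $Q(n)$ is small we have $Q(n+1)\leq (p_1+o(1))Q(n)$, giving $Q(n)\leq Cp_1^n$ if $p_1>0$ and $Q(n)\leq Q(n_0)^{2^{n-n_0}}$ if $p_1=0$ (since then $f(s)\leq s^2$); either way an exponential tail. For (3) ($1<\lambda<m$), iterating the recursion $k\asymp\log_\lambda t$ times drives the argument of $f^{(k)}$ down to $t_0$, producing $Q(t)\leq f^{(k)}(Q(t_0))$. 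Koenigs-type analysis of iteration near the attracting fixed point $0$ gives $f^{(k)}(c)\asymp p_1^k$ when $p_1>0$ (controlled by $f'(0)=p_1$), yielding $Q(t)\leq c\,t^{\log p_1/\log \lambda}$; when $p_1=0$ the vanishing derivative forces $f^{(k)}(c)$ to decay super-geometrically in $k$, translating into $Q(t)\leq c_\alpha t^{-\alpha}$ for every $\alpha>0$.

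\textbf{Main obstacle.} Once the recursion $Q(t)\leq f(Q((t-1)/\lambda))$ is in place, the argument is essentially mechanical. The key nontrivial input is the base case $Q(t_0)<1$, which amounts to almost sure finiteness of $R^\lambda(o)$ and hence to the transience criterion of \cite{L}; the remaining point is the classical asymptotics of $f^{(k)}$ near the attracting fixed point $0$, controlled by $f'(0) = p_1$.
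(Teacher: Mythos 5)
Your proof is correct and follows essentially the same route as the paper's: the same tail recursion $Q(t)\le f\bigl(Q((t-1)/\lambda)\bigr)$ (the paper reaches it via the escape probability $\alpha^{\lambda}(T)=1/(1+\lambda R^{\lambda}(o))$ and the bound $F_{\lambda}^{*l}(s)\le F_{\lambda}(s)^{l}$, you reach it directly from the series--parallel resistance recursion, but the inequality is identical), iterated $\asymp\log_{\lambda}t$ (resp.\ $\asymp n$) times and combined with the Athreya--Ney asymptotics $G^{(k)}(s)\asymp p_1^{k}$, with parts (1) and (4) argued exactly as in the paper. The one divergence is the sub-case $1<\lambda<m$, $p_1=0$: the paper stochastically dominates $R^{\lambda}(o)$ by the resistance of a Galton--Watson tree with modified offspring law having $q_1^{\varepsilon}=\varepsilon>0$ and lets $\varepsilon\to0$, whereas you use $f(s)\le s^{2}$ to get super-polynomial (in fact stretched-exponential) decay directly --- both work, and your version is slightly more self-contained.
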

\begin{proof}
The first claim immediately follows from Rayleigh's monotonicity principle.
 We will prove the rest.
 Remark that the following argument heavily relies on that in \cite[Section 4]{LPP3}.
 First, we assume $p_1>0$.
 For $T\in\mathbb{T}$, it is easy to show that
 \begin{align}\label{gamma}
 \alpha^{\lambda}(T)=\frac{\sum_{v\in S(o)}\alpha^{\lambda}(T(v))}{\lambda+\sum_{v\in S(o)}
 \alpha^{\lambda}(T(v))}=\frac{1}{1+\lambda R^{\lambda}(o)}.
 \end{align}
 Define 
 \begin{align*}
 F_{\lambda}(s):=\mathbb{P}_{{\rm GW}}(\alpha^{\lambda}(T)\leq s)=
 \mathbb{P}_{{\rm GW}}\left(R^{\lambda}(o)\geq\frac{1-s}{s\lambda}\right),
\end{align*}
and denote the $k$-th fold convolution of $F_{\lambda}$ by $F_{\lambda}^{*k}$. Then by combining the branching property of
 Galton-Watson trees and (\ref{gamma}), we obtain
\begin{align*}
F_{\lambda}(s)=
\begin{cases}
\sum_{l}p_lF_{\lambda}^{*l}(\frac{s\lambda}{1-s})\ \ \ $if$\ s\in(0,1),\\
0\  \  \  \  \  \ \ \ \ \ \ \ \ \ \ \  \ \ \ \ $if$\ s\leq0 ,                           \\
1\  \  \  \  \ \ \ \ \ \ \ \ \ \ \ \ \ \ \ \ $if$\ s\geq1.
\end{cases}
\end{align*}
By this expression, we get 
\begin{align*}
\mathbb{P}_{{\rm GW}}(R^{\lambda}(o)\geq n)=F_{\lambda}\left(\frac{1}{1+\lambda n}\right)
=\sum_lp_lF_{\lambda}^{*l}\left(\frac{1}{n}\right)
\leq\sum_lp_lF_{\lambda}\left(\frac{1}{n}\right)^l
=\sum_lp_l\mathbb{P}_{{\rm GW}}\left(R^{\lambda}(o)\geq\frac{n-1}{\lambda}\right)^l.
\end{align*}
So if we define $G(x):=\sum_lp_lx^l$, we have 
\begin{align*}
\mathbb{P}_{{\rm GW}}(R^{\lambda}(o)\geq n)\leq
 G\left(\mathbb{P}_{{\rm GW}}\left(R^{\lambda}(o)\geq\frac{n-1}{\lambda}\right)\right)\leq
G\circ G\left(\mathbb{P}_{{\rm GW}}\left(R^{\lambda}(o)\geq\frac{n-\lambda-1}{\lambda^2}\right)\right)\leq....
\end{align*}
 Since $R^{\lambda}(o)<\infty\ a.s.$ for $0<\lambda<m$,
 we have $\mathbb{P}_{{\rm GW}}(R^{\lambda}(o)\geq N)<1/2$ for $N$ large enough.
 Hence we obtain that $\mathbb{P}_{{\rm GW}}(R^{\lambda}(o)\geq n)\leq G^{(l)}(1/2)$ for $l\in\mathbb{N}$
 satisfying 
 \begin{align*}
 \frac{n-\sum_{j=0}^{l-1}\lambda^j}{\lambda^l}>N.
 \end{align*}
  Note that by \cite[Chapter 1, Section 11]{AN},
 there exists a positive function $Q(s)$ on $[0,1]$ such that 
 \begin{align*}
 \lim_{n \to \infty}p_1^{-n}G^{(n)}(s)=Q(s)\ \ {\rm for\ any}\ s\in[0,1].
 \end{align*}
 This implies that when $\lambda=1$ there exists a constant $b>0$ such that
 \begin{align*}
 \mathbb{P}_{{\rm GW}}(R^1(o)>n)\leq e^{-bn},
 \end{align*}
 and when $1<\lambda<m$ there exists a constant $c>0$ such that
 \begin{align*}
 \mathbb{P}_{{\rm GW}}(R^{\lambda}(o)>n)\leq cn^{\log p_1/\log\lambda}.
 \end{align*}
 These estimates imply the second claim, and the third claim in the case of $p_1>0$.\par
  When $1<\lambda<m$ and $p_1=0$,
 for $\varepsilon>0$
 we define another offspring distribution $\{q^{\varepsilon}_k\}_{k\geq0}$ by 
 $q^{\varepsilon}_0:=0$,
 $q^{\varepsilon}_1:=\varepsilon$, $q^{\varepsilon}_L:=p_L-\varepsilon$ and
 $q^{\varepsilon}_n:=p_n$ for $n>L$, where $L:=\inf\{k:p_k>0\}$. If we take $\varepsilon>0$ sufficiently small such that
 $\varepsilon<p_L$ and $\lambda<\sum_{k\geq1}kq^{\varepsilon}_k$, it is easy to see that we can define the Galton-Watson
 tree with offspring distribution $\{q^{\varepsilon}_k\}_{\geq0}$ on the probability space $(\mathbb{T},\mathbb{P}_{{\rm GW}})$
 in such a way that $R^{\lambda}(o)$ is stochastically dominated by the effective resistance of $\lambda$-biased
 random walk on the Galton-Watson tree with offspring distribution $\{q^{\varepsilon}_k\}_{\geq0}$.
 Hence, we get the third claim. The fourth claim is immediate from the equality
\begin{align*}
\frac{1}{R^{\lambda}(o)}=\sum_{v\in S(o)}\frac{1}{1+R^{\lambda}(v)}\leq\# S(o),
\end{align*}
 which follows from the parallel and serial laws of basic electric network theory.
\end{proof}

{\it Proof of Proposition \ref{Resistance}.} We shall verify below that the
 function $f^{\lambda}:\mathbb{T}_{{\rm ray}}\rightarrow\mathbb{R}$ defined by
\begin{equation*}
f^{\lambda}((T,\omega)):=\log\frac{R^{\lambda}([\omega]_1)}{R^{\lambda}(o)},
\end{equation*}
 is integrable 
 with respect to $\mu_{{\rm HARM}}^{\lambda}\times{\rm HARM}_T^{\lambda}$.
 Then Proposition \ref{inv} and  Birkhoff's ergodic theorem imply that for
 $\mu_{{\rm HARM}}^{\lambda}\times
{\rm HARM}^{\lambda}_T$-almost every $(T,\omega)$,
\begin{eqnarray*}
\lim_{n\to\infty}\frac{1}{n}\log R^{\lambda}([\omega]_n)=\lim_{n\to\infty}\frac{1}{n}\sum_{k=0}^nS^kf^{\lambda}(T,\omega)
=\int_{\mathbb{T}_{{\rm ray}}}\log\frac{R^{\lambda}([\xi]_1)}{R^{\lambda}(o)}
d\mu^{\lambda}_{{\rm HARM}}\times{\rm HARM}_T^{\lambda}(T,\xi),
\end{eqnarray*}
where $S$ is the shift operator on $\mathbb{T}_{{\rm ray}}$ defined in (\ref{Sdefn}).
By stationarity of $\mu_{{\rm HARM}}^{\lambda}$, we get
\begin{align*}
\int_{\mathbb{T}_{{\rm ray}}}\log\frac{R^{\lambda}([\xi]_1)}{R^{\lambda}(o)}d\mu^{\lambda}_{{\rm HARM}}\times{\rm HARM}_T^{\lambda}(T,\xi)=\log\lambda,
\end{align*}
which yields the claim of Proposition \ref{Resistance},
 since $\mathbb{P}_{{\rm GW}}\ll \mu_{{\rm HARM}}^{\lambda}$ by Proposition \ref{inv}. The integrability
 of $f^{\lambda}$ with respect to $\mu_{{\rm HARM}}^{\lambda}\times{\rm HARM}_T^{\lambda}$ can be proved as follows:
 since
 \begin{align*}
 &\int_{\mathbb{T}_{{\rm ray}}}\left|\log\frac{R^{\lambda}([\xi]_1)}{R^{\lambda}(o)}\right|d\mu^{\lambda}_{{\rm HARM}}\times{\rm HARM}_T^{\lambda}(T,\xi)
 \leq \int_{\mathbb{T}_{{\rm ray}}}\big(\left|\log R^{\lambda}([\xi]_1)\right|+\left|\log R^{\lambda}(o)\right|\big)d\mu^{\lambda}_{{\rm HARM}}\times{\rm HARM}_T^{\lambda}(T,\xi),\\
 &\leq|\log\lambda|+2\int_{\mathbb{T}_{{\rm ray}}}\left|\log R^{\lambda}(o)\right|d\mu^{\lambda}_{{\rm HARM}}\times{\rm HARM}_T^{\lambda}(T,\xi),
 \end{align*}
 it suffices to prove the integrability of $\log R^{\lambda}(o)$ with respect to $\mu_{{\rm HARM}}^{\lambda}$ because
 the random variable $\log R^{\lambda}(o)$ does not depend on $\xi$.
 It is shown in the proof of \cite[Lemma 5]{Lin} that $0<\theta^{\lambda}(x)<1$ for any $x>0$, hence we only need to prove
 $\log R^{\lambda}(o)\in L^1(\mathbb{T},\mathbb{P}_{{\rm GW}})$. Since
 \begin{align*}
 \mathbb{P}_{{\rm GW}}(|\log R^{\lambda}(o)|\geq n)=\mathbb{P}_{{\rm GW}}(R^{\lambda}(o)\geq e^n)
 +\mathbb{P}_{{\rm GW}}(R^{\lambda}(o)\leq e^{-n}),
 \end{align*}
 we get the desired result by Lemma \ref{rrr}. \qed\\

 As a corollary of Proposition \ref{Resistance}, we obtain the following lower bound of $\beta_{\lambda}$.
 
 \begin{Corollary} \label{>}
For $0<\lambda<m$, we have $\beta_{\lambda}>0\vee\log\lambda$.
\end{Corollary}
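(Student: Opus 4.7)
The inequality $\beta_{\lambda}>0$ is already built into Theorem \ref{HM}(1), so the substantive content is $\beta_{\lambda}>\log\lambda$, a nontrivial constraint only when $1<\lambda<m$. My plan is to evaluate the exponential rate of decay of $D([\omega]_n)$ along a typical ray in two independent ways: one delivering $\log\lambda-\beta_{\lambda}$ via the asymptotics already assembled, the other producing a strictly negative quantity via ergodic theory on $\mathbb{T}_{{\rm ray}}$, and then to equate them.

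For the first evaluation, Proposition \ref{Resistance} combined with Theorem \ref{HM}(2) gives, for $\mathbb{P}_{{\rm GW}}$-a.e.\ $\mathcal{T}$ and ${\rm HARM}^{\lambda}_{\mathcal{T}}$-a.e.\ $\omega$,
\[
\frac{1}{n}\log D([\omega]_n) \;=\; \frac{1}{n}\log{\rm HARM}^{\lambda}_{\mathcal{T}}(\Sigma([\omega]_n))+\frac{1}{n}\log R^{\lambda}([\omega]_n) \;\longrightarrow\; \log\lambda-\beta_{\lambda}.
\]
For the second, the identity $D(x)=G(x,o)/C(o)$ recalled after Theorem \ref{semigroup} gives $D(x)=D(o)\,P_x(\tau_o<\infty)$, and in a tree the return to $o$ factorizes along the ancestral path as
\[
\frac{D([\omega]_n)}{D(o)} \;=\; \prod_{k=1}^n P_{[\omega]_k}\bigl(\tau_{[\omega]_{k-1}}<\infty\bigr) \;=\; \prod_{k=1}^n\bigl(1-\alpha^{\lambda}(\mathcal{T}([\omega]_k))\bigr),
\]
the second equality because the $\lambda$-biased walk from $[\omega]_k$ in $\mathcal{T}$ and the walk from the root of $\widetilde{\mathcal{T}([\omega]_k)}$ have identical transition kernels (they depend only on the conductance ratio $\lambda$ and on $\#S([\omega]_k)$).

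Birkhoff's ergodic theorem for the shift $S$ on $(\mathbb{T}_{{\rm ray}},\mu^{\lambda}_{{\rm HARM}}\times{\rm HARM}^{\lambda}_T)$---stationary and ergodic by Proposition \ref{inv}---applied to $f(T,\omega):=\log(1-\alpha^{\lambda}(T))$ then gives
\[
\frac{1}{n}\log D([\omega]_n)\;\longrightarrow\;\int_{\mathbb{T}}\log\bigl(1-\alpha^{\lambda}(T)\bigr)\,d\mu^{\lambda}_{{\rm HARM}}(T).
\]
By (\ref{gamma}) one has $\alpha^{\lambda}(T)=1/(1+\lambda R^{\lambda}(o))\in(0,1)$ for $\mathbb{P}_{{\rm GW}}$-a.e.\ $T$ (transience forces $R^{\lambda}(o)\in(0,\infty)$ a.s.), hence $\log(1-\alpha^{\lambda}(T))<0$ a.s., and the integral is strictly negative. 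Equating the two limits yields $\log\lambda-\beta_{\lambda}<0$. The main point I expect to need to verify carefully is integrability of $f$: since $d\mu^{\lambda}_{{\rm HARM}}/d\mathbb{P}_{{\rm GW}}=\theta^{\lambda}/h^{\lambda}$ with $\theta^{\lambda}\le 1$, it suffices to bound $|\log(1-\alpha^{\lambda})|\in L^1(\mathbb{P}_{{\rm GW}})$; rewriting as $\log\lambda+\log R^{\lambda}(o)-\log(1+\lambda R^{\lambda}(o))$ reduces this to $|\log R^{\lambda}(o)|\in L^1(\mathbb{P}_{{\rm GW}})$, which is exactly what was established in the proof of Proposition \ref{Resistance} using the upper- and lower-tail estimates of Lemma \ref{rrr}.
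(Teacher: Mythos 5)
Your argument is correct, but it is a genuinely different route from the paper's. The paper's proof is a one-line computation at the level of integral formulas: it quotes the dimension--entropy formula $\beta_{\lambda}=\int\log\bigl(1/{\rm HARM}^{\lambda}_{T}([\xi]_1)\bigr)\,d\mu^{\lambda}_{{\rm HARM}}\times{\rm HARM}_T^{\lambda}$ from \cite{LPP1,LPP2}, subtracts the integral representation of $\log\lambda$ coming from the proof of Proposition \ref{Resistance}, and uses Kigami's identity (\cite[Theorem 3.8]{K}) relating ${\rm HARM}^{\lambda}_T(\Sigma(x))$ to effective resistances to reduce everything to the pointwise strict positivity of $\log\bigl((1+R^{\lambda}([\xi]_1))/R^{\lambda}([\xi]_1)\bigr)$. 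You instead compute the exponential decay rate of $D([\omega]_n)$ twice: once as $\log\lambda-\beta_{\lambda}$ (which needs only Theorem \ref{HM}(2) and Proposition \ref{Resistance}, not the entropy formula), and once as $\int\log(1-\alpha^{\lambda})\,d\mu^{\lambda}_{{\rm HARM}}<0$ via the Green-function identity $D(x)=G(x,o)/C(o)$, the factorization of return probabilities along the ancestral line, and a second application of Birkhoff's theorem. All the individual steps check out: the identification $P_{[\omega]_k}(\tau_{[\omega]_{k-1}}<\infty)=1-\alpha^{\lambda}(\mathcal{T}([\omega]_k))$ is legitimate because the killed walk is confined to $\mathcal{T}([\omega]_k)$ and the conductance ratios match those of $\widetilde{\mathcal{T}([\omega]_k)}$; the transfer of a.e.\ statements between $\mathbb{P}_{{\rm GW}}$ and $\mu^{\lambda}_{{\rm HARM}}$ is justified by their mutual absolute continuity; and your reduction of integrability to $|\log R^{\lambda}(o)|\in L^1(\mathbb{P}_{{\rm GW}})$ via $\theta^{\lambda}<1$ is exactly the bound already established in the proof of Proposition \ref{Resistance}. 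What your approach buys is independence from the LPP entropy formula and from Kigami's Theorem 3.8, together with the pleasant byproduct identity $\beta_{\lambda}-\log\lambda=-\int\log(1-\alpha^{\lambda})\,d\mu^{\lambda}_{{\rm HARM}}$; what it costs is an extra ergodic-theorem application and the integrability verification, both of which the paper's shorter argument sidesteps. In the end both proofs hinge on essentially the same strict inequality, namely that a transient walk still returns to its parent with positive probability strictly less than one, expressed once as $\log\bigl((1+R)/R\bigr)>0$ and once as $\log(1-\alpha^{\lambda})<0$.
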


\begin{proof}
It is proved in \cite{LPP1,LPP2} that for $0<\lambda<m$,
\begin{align*}
\beta_{\lambda}=\int_{\mathbb{T}_{{\rm ray}}}\log\frac{1}{{\rm HARM}^{\lambda}_{T}([\xi]_1)}
d\mu^{\lambda}_{{\rm HARM}}\times{\rm HARM}_T^{\lambda}(T,\xi).
\end{align*}
By using \cite[Theorem 3.8.]{K}, we have for $0<\lambda<m$,
\begin{align*} 
\beta_{\lambda}-\log\lambda
=\int_{\mathbb{T}_{{\rm ray}}}\log\left(\frac{1+R^{\lambda}([\xi]_1)}{R^{\lambda}(o)}\cdot
\frac{R^{\lambda}(o)}{R^{\lambda}([\xi]_1)}\right)d\mu^{\lambda}_{{\rm HARM}}\times{\rm HARM}_T^{\lambda}(T,\xi)>0,
\end{align*}
and we get the conclusion.
\end{proof}

\begin{Remark}
\begin{itemize}
\item
In \cite[Section 5]{Lin}, a better lower bound of $\beta_{\lambda}$ is obtained. 
\item
Corollary \ref{>} and the fact that $\beta_{\lambda}<\log m$ imply that 
$\lim_{\lambda\nearrow m}\beta_{\lambda}=\log m$, where $\log m$ is the Hausdorff dimension of the boundary
{\rm (}$\Sigma,d${\rm )} $\mathbb{P}_{{\rm GW}}$-a.s. This is a part of the results stated in \cite[Theorem 1]{Lin}.
 \end{itemize}
\end{Remark}

\section{Proof of the main theorems}
In this section, we construct jump processes on the boundaries of Galton-Watson trees and give
 the proof of the short time
 log-asymptotic of the heat kernels and the estimates of mean displacements.
 For $0<\lambda<m$, let $D^{\lambda}$ be the intrinsic metric of $(\mathcal{T},C^{\lambda})$ on $\Sigma$,
 $(\mathcal{E}^{\lambda}_{\Sigma},\mathcal{F}^{\lambda}_{\Sigma})$ be the Dirichlet form on
 $L^2(\Sigma,{\rm HARM}_{\mathcal{T}}^{\lambda})$
 which corresponds to the $\lambda$-biased random walk on $\mathcal{T}$ and
 $p^{\lambda}_t(\omega,\eta)$ be the heat kernel associated with the Dirichlet form
 $(\mathcal{E}^{\lambda}_{\Sigma},\mathcal{F}^{\lambda}_{\Sigma})$.
 Define $\Sigma_*:=\Sigma_1\cap\Sigma_{2}$. Note that ${\rm HARM}_{\mathcal{T}}^{\lambda}(\Sigma_*)=1$
 $\mathbb{P}_{{\rm GW}}$-$a.s.$
\begin{Theorem}For $0<\lambda<m$, the\ following\ results\ hold\ $\mathbb{P}_{{\rm GW}}$-$a.s$.
\begin{eqnarray*}
\int_{\Sigma}p_t^{\lambda}(\omega,\tau)d{\rm HARM}_{\mathcal{T}}^{\lambda}(\tau)=1,\ \  {\it and}\ \ 
\int_{\Sigma}p_t^{\lambda}(\omega,\xi)p_s^{\lambda}(\xi,\eta)d{\rm HARM}_{\mathcal{T}}^{\lambda}(\xi)
=p_{t+s}^{\lambda}(\omega,\eta),
\end{eqnarray*}
for\ any\ $\omega,\eta \in \Sigma_{*}$\ with\ $\omega \neq \eta$ and\ any\ $t,s>0$.
 Moreover, there exists a Hunt process $(\{X_t^{\lambda}\}_{t>0},\{P_{\omega}^{\lambda}\}_{\omega\in\Sigma})$ on $\Sigma_{*}$ whose transition density is $p_t^{\lambda}(\omega,\eta)$ $\mathbb{P}_{{\rm GW}}$-$a.s.$
\end{Theorem}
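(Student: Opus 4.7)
The plan is to reduce the statement to Theorem \ref{semigroup} and Theorem \ref{DF}, after establishing the decay estimate $D^\lambda([\omega]_n)\to 0$ along every ray through the full-measure set $\Sigma_*$. The subtlety compared with Kigami's deterministic setting is that Theorem \ref{semigroup} is stated under a pointwise hypothesis on all of $\Sigma$, whereas for random trees one can only expect to verify it on $\Sigma_*$; luckily, ${\rm HARM}^\lambda_\mathcal{T}(\Sigma_*)=1$ $\mathbb{P}_{{\rm GW}}$-a.s., so the null complement is invisible to all ${\rm HARM}^\lambda_\mathcal{T}$-integrals, and Kigami's telescoping identities only involve $D^\lambda$ along the distinguished rays $\omega$ and $\eta$.

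First I would establish, for $\omega\in\Sigma_*=\Sigma_1\cap\Sigma_2$ and $0<\lambda<m$, the exponential decay
\begin{equation*}
\lim_{n\to\infty}\frac{1}{n}\log D^\lambda([\omega]_n)=\log\lambda-\beta_\lambda<0.
\end{equation*}
Since $D^\lambda(x)={\rm HARM}^\lambda_\mathcal{T}(\Sigma(x))\cdot R^\lambda(x)$, this follows by adding the two log-asymptotics: $-\frac{1}{n}\log {\rm HARM}^\lambda_\mathcal{T}(\Sigma([\omega]_n))\to\beta_\lambda$ from Theorem \ref{HM}(2) (definition of $\Sigma_1$), and $\frac{1}{n}\log R^\lambda([\omega]_n)\to\log\lambda$ from Proposition \ref{Resistance} (definition of $\Sigma_2$); strict negativity of the sum is Corollary \ref{>}. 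In particular $D^\lambda([\omega]_n)\to 0$ for every $\omega\in\Sigma_*$, which is all that Kigami's proof ever uses for the distinguished endpoints.

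Next I would verify stochasticity and Chapman–Kolmogorov by transcribing Kigami's telescoping/Fubini argument (the proof of Theorem \ref{semigroup}) into the present setting. For fixed $\omega,\eta\in\Sigma_*$ with $\omega\neq\eta$, the explicit expression (\ref{expression}) writes $p^\lambda_t(\omega,\xi)$ as a finite sum in $\xi$ indexed by the level of agreement $N(\omega,\xi)$, with weights $\exp(-t/D^\lambda([\omega]_k))$ that decay superexponentially in $k$ thanks to the previous step. This gives absolute convergence of the relevant double series, lets one swap sums and integrals freely, and the telescoping in Kigami's computation produces the two integral identities. Crucially, the integration variable $\xi$ ranges over $\Sigma$ but the ${\rm HARM}^\lambda_\mathcal{T}$-null set $\Sigma\setminus\Sigma_*$ may be discarded, so no pointwise control on bad rays is needed.

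Finally, the Hunt process is produced by Theorem \ref{DF}, which gives that $(\mathcal{E}^\lambda_\Sigma,\mathcal{F}^\lambda_\Sigma)$ is a regular Dirichlet form on $L^2(\Sigma,{\rm HARM}^\lambda_\mathcal{T})$ $\mathbb{P}_{{\rm GW}}$-a.s.; standard Fukushima theory then furnishes an associated Hunt process, and the identification of $p^\lambda_t$ with its transition density is a routine consequence of the just-established semigroup property together with the explicit form of (\ref{expression}), exactly as in Kigami's proof. The process may then be restricted to the invariant full-measure set $\Sigma_*$. The one genuinely new ingredient is the exponential decay of $D^\lambda([\omega]_n)$, which is the main obstacle: it relies essentially on the ergodic-theoretic input of Proposition \ref{Resistance} together with the strict inequality $\beta_\lambda>\log\lambda$ from Corollary \ref{>}, both of which are specific to Galton–Watson trees and have no counterpart in the deterministic case.
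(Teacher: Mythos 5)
Your proposal follows the paper's proof essentially verbatim: the decay $D^{\lambda}([\omega]_n)\to 0$ for $\omega\in\Sigma_*$ is obtained exactly as in the paper from Theorem \ref{HM}, Proposition \ref{Resistance} and Corollary \ref{>}, and the two integral identities are then derived from the explicit expression (\ref{expression}) by Kigami's telescoping computation. The only divergence is the construction of the Hunt process: the paper reduces this to verifying the Feller-type properties $p_t^{\lambda}(C(\Sigma_*))\subseteq C(\Sigma_*)$ and $\lVert p_t^{\lambda}u-u\rVert_{\infty}\to 0$ by the argument of \cite[Theorem 7.3, Lemma 7.4]{K}, whereas you first route through the Fukushima construction attached to the regular Dirichlet form of Theorem \ref{DF}; be aware that this abstract construction yields the process only up to an exceptional set, so the pointwise identification of $p_t^{\lambda}$ as the transition density on all of $\Sigma_*$ still requires the Feller verification that you ultimately defer to Kigami, at which point the two routes coincide.
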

\begin{proof} By Theorem \ref{HM}, Proposition \ref{Resistance} and Corollary \ref{>}, for $0<\lambda<m$,
 we have $\lim_{n\to\infty}D^{\lambda}([\omega]_n)=0$ for $\omega\in\Sigma_*$ $\mathbb{P}_{{\rm GW}}$-$a.s.$
 By using (\ref{expression}), a routine calculation yields the first statement.
 In order to prove the second statement, it suffices to show that for $0<\lambda<m$, 
 $p_t^{\lambda}(C(\Sigma_{*}))\subseteq C(\Sigma_{*})$, and $\| p_t^{\lambda}u-u \|_{\infty}\rightarrow0$
 as $t\to0$\ for\ any\ $u\in C(\Sigma_{*})$ $\mathbb{P}_{{\rm GW}}$-$a.s.$,
 where $C(\Sigma_{*})$ is the collection of continuous functions on $\Sigma_{*}$.
 It can be proved by a similar argument to that in \cite[Theorem 7.3, Lemma 7.4]{K}. 
\end{proof}

We now prove Theorem \ref{OND}. In the rest of this paper, we write
 $X_t^{\lambda}, E^{\lambda}_{\omega}, D^{\lambda},\ {\rm HARM}_{\mathcal{T}}^{\lambda}$, $R^{\lambda}(v)$
 and$\ p_t^{\lambda}$ as $X_t, E_{\omega}, D,\ {\rm HARM},\ R(v)$ and$\ p_t$ respectively.\\

{\it Proof of Theorem \ref{OND}.}
We first prove (\ref{A}). By Proposition \ref{HK}, for $\omega,\eta\in\Sigma$ with $\omega\neq\eta$,
 and $0<t\leq D(\omega,\eta)$ we have
\begin{align*}
p_t(\omega,\eta)\leq\frac{t}{D(\omega,\eta){\rm HARM}(\Sigma([\omega,\eta]))}.
\end{align*}
Recalling the convention $1/D([\omega]_{-1})=0$, by (\ref{expression}) and the first claim of Proposition \ref{D},
 we have
\begin{align*}
p_t(\omega,\eta)&=\sum_{n=0}^{N(\omega,\eta)}\dfrac{1}{{\rm HARM}_{T}(\Sigma([\omega]_n))}
\left(\exp(-t/D([\omega]_{n-1}))-\exp(-t/D([\omega]_n)\right)\\
&\geq \dfrac{1}{{\rm HARM}_{T}(\Sigma([\omega]_0))}
\left(\exp(-t/D([\omega]_{-1}))-\exp(-t/D([\omega]_0)\right)=1-\exp(-t/D([\omega]_0).
\end{align*}
Therefore, for any $\omega,\eta\in\Sigma$ with $\omega\neq\eta$, we get 
\begin{align*}
\lim_{t\to 0}\frac{\log p_t(\omega,\eta)}{\log t}=1.
\end{align*}
\par
We next prove (\ref{AA}).
It is sufficient to prove the claim for any $\omega\in\Sigma_{*}$.
 We will write $D_n=D([\omega]_n)$, $R_n=R([\omega]_n)$ and $ H_n={\rm HARM}(\Sigma([\omega]_n))$.
 Let $l=l(\omega,t)$ be the unique integer which satisfies $D_l < t \le D_{l-1}$.
 Then we have $\lim_{t \to +0} l(\omega,t)=\infty$ for\ all\ $\omega\in\Sigma_{*}.$
Note that by Proposition \ref{HK}, we have the following lower bound.
\begin{equation*}
p_t(\omega,\omega)\geq \frac{1}{e}\cdot\frac{1}{{\rm HARM}(B_D(\omega,t))}=\frac{1}{e}\cdot\frac{1}{ H_l}.
\end{equation*}
So we will prove the upper bound. Recall that by (\ref{expression}), we have 
\begin{align}\label{defn}
p_t(\omega,\omega)=1+\sum_{n=0}^{\infty} \left(\dfrac{1}{ H_{n+1}} - \dfrac{1}{ H_n}\right)
\exp(-t/D_n).
\end{align}
 We get 
\begin{eqnarray}\label{til_l}
1&+&\sum_{n=0}^{l-1} \left(\dfrac{1}{ H_{n+1}} - \dfrac{1}{ H_n}\right)\nonumber
\exp(-t/D_n) \le 1+\sum_{n=0}^{l-1} \left(\dfrac{1}{H_{n+1}} -\dfrac{1}{H_n}\right)\\
&=&\dfrac{1}{ H_l}
=\dfrac{1}{ {\rm HARM}(\Sigma([\omega]_l))}=\dfrac{1}{ {\rm HARM}(B_d(\omega,e^{-l}))}.
\end{eqnarray}
On the other hand, we have
\begin{eqnarray}\label{after_l}
\sum_{n=l}^{\infty} \left(\frac{1}{H_{n+1}} - \frac{1}{H_n}\right)\exp(-t/D_n)  
\le \sum_{n=l}^{\infty} \frac{1}{H_{n+1}}\exp(-t/D_n)\le
 \frac{1}{H_l} \sum_{n=l}^{\infty} \frac{H_l}{H_{n+1}} \exp\left(-\frac{D_l}{D_n}\right).
\end{eqnarray}
By Theorem 3.1, we have the following control of the volume. For all $\varepsilon>0$, there exists 
 a random integer $L=L(\omega,\varepsilon)$
 such that for all $n\geq L$,
\begin{equation*}
\exp \{-(\beta_{\lambda}+\varepsilon)n\} \le H_n \le \exp \{-(\beta_{\lambda}-\varepsilon)n\}.
\end{equation*}
For $t>0$ sufficiently small, we have $l\geq L$, and
\begin{align*}
\sum_{n=l}^{\infty} \frac{H_l}{H_{n+1}} \exp\left(-\frac{D_l}{D_n}\right)  
&\le C\sum_{n=l}^{\infty} \exp \{(\beta_{\lambda}+\varepsilon)(n-l)
+2\varepsilon l \}\cdot\exp \left[-C' \frac{R_l}{R_n}\exp\{(\beta_{\lambda}-\varepsilon)(n-l)-2\varepsilon l\}\right]  \\
&\le C\exp(2\varepsilon l) \sum_{k=0}^{\infty} \exp \{(\beta_{\lambda}+\varepsilon)k \} 
\exp\left[-\frac{C'}{\exp(2\varepsilon l)}\frac{R_l}{R_{l+k}}\exp \{(\beta_{\lambda}-\varepsilon)k \}\right], 
\end{align*}
where $C$ and $C'$ are constants which do not depend on $t$. 
By Proposition \ref{Resistance}, for all $\varepsilon>0$, there exists $K=K(\omega,\varepsilon)$ such that
 for all $n\geq K$,
\begin{align*} 
\exp\{(\log\lambda-\varepsilon)n\}\leq R_n\leq\exp\{(\log\lambda+\varepsilon)n\}.
\end{align*}
Hence, for $0<\lambda<m$, $\varepsilon >0$, $l\geq N\vee K$ and $t>0$ sufficiently small, 
\begin{align*}
\sum_{n=l}^{\infty} \frac{H_l}{H_{n+1}} \exp\left(-\frac{D_l}{D_n}\right)  
&\le C\exp(2\varepsilon l) \sum_{k=0}^{\infty} \exp\{(\beta_{\lambda}+\varepsilon)k\}
\cdot\exp \left[-\frac{C'}{\exp(2\varepsilon l)}\frac{\exp\{(\log\lambda-\varepsilon) l\}}
{\exp \{(\log\lambda+\varepsilon) (l+k) \}} \exp \{(\beta_{\lambda}-\varepsilon)k\}\right]  \\
&\le C\exp(2\varepsilon l) \sum_{k=0}^{\infty} \exp \{(\beta_{\lambda}+\varepsilon)k \}\exp \left[-\frac{C'}{\exp(4\varepsilon l)} \exp\{(\beta_{\lambda}-\log\lambda-2\varepsilon)k\}\right]  \\
&\le C\exp(2\varepsilon l) \sum_{k=0}^{\infty} \alpha^{Qk}\exp\left\{-\frac{C'}{\exp(4\varepsilon l)} \alpha^k \right\}, 
\end{align*}
where\  $\alpha=\exp(\beta_{\lambda}-\log\lambda-\varepsilon)>1$ and
 $Q:=\frac{\beta_{\lambda}+\varepsilon}{\beta_{\lambda}-\log\lambda-\varepsilon}+1$.
 If we define $f(x)=x^{Q+1}\exp(-\gamma x)$ for $\gamma>0$, one can easily check that
 $f(x) \le f(\frac{Q+1}{\gamma})=\frac{(Q+1)^{Q+1}\exp\{-(Q+1)\}}{\gamma^{Q+1}}$. Thus, 
\begin{equation}\label{Q}
\sum_{n=l}^{\infty} \frac{H_l}{H_{n+1}} \exp\left(-\frac{D_l}{D_n}\right)
\le C\exp(2\varepsilon l)\sum_{k=0}^{\infty} \alpha^{-k}C'\exp\{4(Q+1)\varepsilon l\}\le C''\exp\{(4Q+6)\varepsilon l\}.
\end{equation}
By combining (\ref{defn}), (\ref{til_l}), (\ref{after_l}) and (\ref{Q}), we obtain
\begin{equation}\label{C}
\log{\frac{1}{H_l}}-1\le
\log{p_t(\omega,\omega)} \le \log{\frac{1}{H_l}}+\log{\big(1+C''\exp\{(4Q+6)\varepsilon l\}\big)}.
\end{equation}
By Theorem 3.1, this implies 
\begin{equation*}
\lim_{t \to 0}\frac{\log{p_t(\omega,\omega)}}{l(\omega,t)}=\beta_{\lambda},\ \ {\rm for\ all}\ \omega \in \Sigma_{*},\ \ \mathbb{P}_{{\rm GW}}\mathchar`-a.s.
\end{equation*}
So the proof will be finished if we prove the following.
\begin{equation}\label{B}
\lim_{t \to 0} -\frac{l(\omega,t)}{\log t}=\frac{1}{\beta_{\lambda}-\log\lambda},\ \ {\rm for\ all}\ \omega \in \Sigma_{*},
\ \ \ \mathbb{P}_{{\rm GW}}\mathchar`-a.s. 
\end{equation}
 We now prove $(\ref{B})$. By the definition of $l(\omega,t)$,
 Theorem \ref{HM} and Proposition \ref{Resistance}, for all $\varepsilon>0$ and $\omega \in \Sigma_{*}$,
 we have the following inequality for $t>0$ sufficiently small $\mathbb{P}_{{\rm GW}}$-a.s.
\begin{align}\label{BB}
\exp \{-(\beta_{\lambda}+\varepsilon)l\}\exp\{(\log\lambda-\varepsilon) l\} &\le H_l R_l=D_l
<t\leq D_{l-1}=H_{l-1}R_{l-1}\nonumber\\
&\le \exp\{-(\beta_{\lambda}-\varepsilon)l\}\exp\{(\log\lambda+\varepsilon)l\}. 
\end{align}
So, we have
\begin{equation}-(\beta_{\lambda}-\log\lambda+2\varepsilon)l<\log t \le -(\beta_{\lambda}-\log\lambda-2\varepsilon)l, \nonumber \end{equation}
and $(\ref{B})$ is proved.
\qed\\

In order to prove Theorem \ref{displacement}, we first show the following proposition corresponding to Theorem \ref{HKE}.

\begin{Proposition}\label{Dis}
For $0<\lambda<m$, the following holds $\mathbb{P}_{{\rm GW}}$-a.s.
\begin{eqnarray*}
\lim_{t \to 0} \frac{\log{E_{\omega}[D(\omega,X_t)^{\gamma}]}}{\log t}=\gamma\wedge1,\ \ \ \ {\rm HARM}_{\mathcal{T}}^{\lambda}\ a.e.
\mathchar`-\omega. 
\end{eqnarray*}
\end{Proposition}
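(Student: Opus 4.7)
The plan is to reduce everything to the explicit heat-kernel formula (\ref{expression}) and then read off the logarithmic asymptotics from $\log H_n = -\beta_\lambda n + o(n)$ and $\log R_n = (\log\lambda)n+o(n)$, coming from Theorem \ref{HM} and Proposition \ref{Resistance}; in particular $\log D_n = -\rho\,n+o(n)$ with $\rho := \beta_\lambda-\log\lambda>0$. Throughout, I fix $\omega\in\Sigma_*$, write $H_j := {\rm HARM}(\Sigma([\omega]_j))$, $D_j := D([\omega]_j)$, $\Delta_j := e^{-t/D_{j-1}}-e^{-t/D_j}$ (with $1/D_{-1}:=0$), and let $l=l(\omega,t)$ be the unique integer with $D_l<t\leq D_{l-1}$.

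Multiplying (\ref{expression}) by $D(\omega,\eta)^\gamma$, integrating against ${\rm HARM}$, and exchanging the two sums yields
\[
E_\omega\bigl[D(\omega,X_t)^\gamma\bigr] \;=\; \sum_{j=0}^\infty \frac{M_j}{H_j}\,\Delta_j,\qquad M_j := \int_{\Sigma([\omega]_j)} D(\omega,\eta)^\gamma\,d{\rm HARM}(\eta).
\]
The ultrametric property (Proposition \ref{D}) gives $M_j\leq D_j^\gamma H_j$, hence $E_\omega[D(\omega,X_t)^\gamma]\leq \sum_j D_j^\gamma\Delta_j$. Splitting at $j=l$, bounding $\Delta_j\leq t/D_j$ for $j\leq l$ and $\Delta_j\leq 1$ for $j>l$, and substituting $D_j = e^{-\rho j+o(j)}$, the head piece $t\sum_{j\leq l} D_j^{\gamma-1}$ is of order $t$, $t\cdot l$, or $t\cdot D_l^{\gamma-1}\approx t^\gamma$ according to whether $\gamma>1$, $\gamma=1$, or $\gamma<1$, while the tail $\sum_{j>l}D_j^\gamma$ is dominated by $D_l^\gamma\approx t^\gamma$ up to $e^{o(l)}$ corrections. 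This gives $E_\omega[D(\omega,X_t)^\gamma]\leq t^{\gamma\wedge 1+o(1)}$, so $\liminf_{t\to 0}\log E_\omega[D(\omega,X_t)^\gamma]/\log t \geq \gamma\wedge 1$.

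For the reverse (limsup) inequality I split by the size of $\gamma$. When $\gamma\geq 1$, keeping just the $j=0$ term gives $E_\omega[D(\omega,X_t)^\gamma]\geq M_0(1-e^{-t/D_0})\geq c(\omega)t$ for $t$ small, where $M_0>0$ since ${\rm HARM}\{\omega\}\leq H_n\to 0$ makes ${\rm HARM}$ nonatomic at $\omega$; this yields $\limsup \log E_\omega[D(\omega,X_t)^\gamma]/\log t \leq 1$. When $\gamma<1$ I instead use the ring-type bound $E_\omega[D(\omega,X_t)^\gamma]\geq D_{l+k}^\gamma\bigl(1-P_\omega(X_t\in\Sigma([\omega]_{l+k+1}))\bigr)$ together with $P_\omega(X_t\in\Sigma([\omega]_{l+k+1}))\leq H_{l+k+1}\cdot p_t(\omega,\omega)$ (recall $p_t(\omega,\cdot)\leq p_t(\omega,\omega)$), combined with the estimate $p_t(\omega,\omega)\leq H_l^{-1} e^{O(\epsilon)l}$ already proved at (\ref{C}) in the proof of Theorem \ref{OND}, plus the log-asymptotic control of $H_{l+k+1}/H_l$. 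Choosing $k=\lceil C_*\epsilon l\rceil$ with $C_*$ large enough forces the escape probability to be at most $1/2$ for $l$ large, while $\log D_{l+k}^\gamma = -\gamma\rho l(1+O(\epsilon))+o(l)$, so $\limsup_{t\to 0}\log E_\omega[D(\omega,X_t)^\gamma]/\log t\leq \gamma(1+O(\epsilon))$ and then $\leq \gamma$ on sending $\epsilon\to 0$. The main technical point is precisely this $\gamma<1$ lower bound: since the {\bf (EL)} condition may fail, one cannot directly compare $H_{l+1}$ with $H_l$ along a single ray, so one must descend $O(\epsilon l)$ extra levels past $l$, an excursion that is invisible on the logarithmic scale.
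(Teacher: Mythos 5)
Your argument is correct, and while it has the same skeleton as the paper's proof (an upper bound on $E_\omega[D(\omega,X_t)^\gamma]$ giving the $\liminf$, then two separate lower bounds giving $\limsup\le 1$ and $\limsup\le\gamma$), each of the three estimates is carried out by a genuinely different mechanism, with both proofs consuming the same inputs: the expansion (\ref{expression}), the asymptotics $\log H_n\sim-\beta_\lambda n$ and $\log R_n\sim(\log\lambda)n$ from Theorem \ref{HM} and Proposition \ref{Resistance}, and the on-diagonal bound (\ref{C}). For the upper bound, the paper decomposes $\Sigma$ into dyadic $D$-annuli $I_n=\{2^{n-1}t<D(\omega,\eta)\le 2^nt\}$ and applies the pointwise heat kernel bounds of Proposition \ref{HK}; your Tonelli exchange together with the ultrametric bound $M_j\le D_j^\gamma H_j$ reduces everything to the elementary series $\sum_j D_j^\gamma\bigl(e^{-t/D_{j-1}}-e^{-t/D_j}\bigr)$ and needs no heat kernel bounds at all. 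For $\limsup\le1$, the paper proves the off-diagonal linear lower bound $p_t(\omega,\eta)\ge ct$ for $0<t<D(\omega,\eta)$ via the monotonicity of $(1+x)e^{-x}$, whereas you keep only the $n=0$ term of the series; your justification that $M_0>0$ (via ${\rm HARM}(\{\omega\})\le H_n\to0$) is the one point that needs saying, and you say it. The substantial divergence is the case $\gamma<1$: the paper's proof rests on the near-diagonal lower bound (\ref{BBBB}) for $p_t(\omega,\eta)$ on a ring $B_D(\omega,t^{1+\delta'})\setminus B_D(\omega,t^{1+\delta''})$, obtained by subtracting the tail of the expansion from $p_t(\omega,\omega)$ and controlling it through an incomplete-gamma computation; you bypass this entirely with the escape estimate $E_\omega[D(\omega,X_t)^\gamma]\ge D_{l+k}^\gamma\bigl(1-H_{l+k+1}\,p_t(\omega,\omega)\bigr)$, and your choice $k=\lceil C_*\varepsilon l\rceil$ is exactly what is needed to absorb the $e^{O(\varepsilon l)}$ error in (\ref{C}) while remaining invisible at logarithmic scale. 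Your route is shorter and, for $\gamma<1$, noticeably more elementary; what it gives up is the quantitative near-diagonal estimate (\ref{BBBB}) itself, which is of independent interest as a heat kernel bound.
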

\begin{proof}
Analogously to Theorem \ref{OND}, it is sufficient to prove the claim for any $\omega\in\Sigma_{*}$.\\
{\bf Lower bound.} In order to establish the lower bound, we have to obtain the upper bound of
 $E_{\omega}[D(\omega,X_t)^{\gamma}]$. Define
\begin{equation*} 
I_0 := \{\eta \in \Sigma :D(\omega,\eta) \leq t \},\ I_n  := \{ \eta \in \Sigma :2^{n-1}t < D(\omega,\eta) \le 2^nt \},
\end{equation*}
for $n\geq1$. From the expression of the heat kernel (\ref{expression}), it follows that $p_t(\omega,\omega) \geq p_t(\omega,\eta)$\ for\ all\ $\omega,\eta \in \Sigma$. So by using the on-diagonal upper bound of the heat kernel (\ref{C}),
 for $t>0$ sufficiently small, we have the following for all $\omega \in \Sigma_{*}.$
\begin{equation*}
\int_{I_0} p_t(\omega,\eta)D(\omega,\eta)^{\gamma}d{\rm HARM}(\eta)
 \le p_t(\omega,\omega)\cdot t^{\gamma}\cdot {\rm HARM}(I_0) \le t^{\gamma}(1+C\exp((4Q+6)\varepsilon l)).
\end{equation*}
Note that by Proposition \ref{HK}, we have the off-diagonal upper bound for the heat kernel; 
\begin{equation}p_t(\omega,\eta) \le \frac{t}{D(\omega,\eta){\rm HARM}\left(B_D(\omega,D(\omega,\eta))\right)}. \nonumber \end{equation}
 Thus for $n\geq1$,
\begin{eqnarray*}
\int_{I_n}p_t(\omega,\eta)D(\omega,\eta)^{\gamma}d{\rm HARM}(\eta) &\le& 
t \int_{I_n}\frac{D(\omega,\eta)^{\gamma -1}}{{\rm HARM}(B_D(\omega,D(\omega,\eta)))} d{\rm HARM}(\eta)\\
&\le& t(2^{n}t)^{\gamma-1}\left( \frac{{\rm HARM}(B_D(\omega,2^nt))}{{\rm HARM}(B_D(\omega,2^{n-1}t))}-1\right).
\end{eqnarray*}
By the proof of Theorem \ref{OND},
there\ exists\ $t_0=t_0(\omega,\varepsilon)>0$\ such\ that\ \ $t^{\kappa_{\lambda}+\varepsilon}\le 
{\rm HARM}(B_D(\omega,t))\le t^{\kappa_{\lambda}-\varepsilon}$
for\ all\ $0<t\le t_0$\ and\ for\ all\ $\omega \in \Sigma_{*}$,\ where\ $\kappa_{\lambda}
=\frac{\beta_{\lambda}}{\beta_{\lambda}-\log\lambda}$. 
So we have,
\begin{equation*}
\frac{{\rm HARM}(B_D(\omega,2t))}{{\rm HARM}(B_D(\omega,t))}\le \frac{(2t)^{\kappa_{\lambda}-\varepsilon}}
{t^{\kappa_{\lambda}+\varepsilon}}\le Ct^{-2\varepsilon},\ \ {\rm for\ all}\ 0<t\le t_0,\ {\rm and\ for\ all}\ \omega \in \Sigma_{*},
\end{equation*}
where\ $C$\ is\ a\ constant\ which\ does not\ depend\ on\ $t.$ Define $M:= \max\{n:2^nt \le t_0  \}$\ and\ 
$I:= \bigcup_{n\geq M+1}I_n$. Then we have the following results for $\varepsilon>0$ and $t>0$ sufficiently small.
\begin{eqnarray}\label{I_n}
\sum_{n=1}^M \int_{I_n} p_t(\omega,\eta)D(\omega,\eta)^{\gamma}d{\rm HARM}(\eta)
\le t\sum_{n=1}^M (2^nt)^{\gamma-1}\{C(2^nt)^{-2\varepsilon}-1\} 
\le \begin{cases}
Ct^{\gamma-2\varepsilon}\ 0<\gamma<1\\
Ct^{1-2\varepsilon}\ \ \gamma=1\\
Ct\ \ \ \ \ \ \ \ \gamma>1,
\end{cases}
\end{eqnarray}
and
\begin{eqnarray}\label{I}
&&\int_{I}p_t(\omega,\eta)D(\omega,\eta)^{\gamma}d{\rm HARM}(\eta)
\le t \int_{I} \frac{D(\omega,\eta)^{\gamma -1}}{{\rm HARM}(B_D(\omega,D(\omega,\eta)))}d{\rm HARM}(\eta) \nonumber\\
&\le& t R(o)^{\gamma-1} \cdot \frac{{\rm HARM}(I)}{{\rm HARM}(B_D(\omega,t_0))}
\le t R(o)^{\gamma-1} \cdot \frac{1-{\rm HARM}(B_D(\omega,\frac{t_0}{2}))}{{\rm HARM}(B_D(\omega,t_0))}.
\end{eqnarray}
Since
\begin{equation*}
E_{\omega}[D(\omega,X_t)^{\gamma}] 
=\sum_{n=0}^M \int_{I_n} p_t(\omega,\eta)D(\omega,\eta)^{\gamma}d{\rm HARM}(\eta)
+\int_{I}p_t(\omega,\eta)D(\omega,\eta)^{\gamma}d{\rm HARM}(\eta),
\end{equation*}
by combining (\ref{I_n}) and (\ref{I}), we obtain
\begin{equation*}
\liminf_{t \to 0} \frac{\log{E_{\omega}[D(\omega,X_t)^{\gamma}]}}{\log t}\geq \gamma \wedge 1,\ \ \ {\rm for\ all}\ \omega \in \Sigma_{*}.
\end{equation*}
{\bf Upper bound.} In order to establish the upper bound, we have to obtain a lower bound of $E_{\omega}[D(\omega,X_t)^{\gamma}].$ First, we will prove the following.
\begin{eqnarray}\label{leq1}
\limsup_{t \to 0} \frac{\log{E_{\omega}[D(\omega,X_t)^{\gamma}]}}{\log t} \leq 1,\ \ \ {\rm for\ all}\ \omega\in\Sigma_{*}.
\end{eqnarray}
Define $F_{\omega,\eta}(t):= p_t(\omega,\eta)$\ for $\omega,\eta \in \Sigma$. Then by (\ref{expression}),
 we have 
\begin{eqnarray}\label{F'}
F'_{\omega,\eta}(t)&=&\sum_{n=0}^{N(\omega,\eta)} \dfrac{\bigl(D([\omega]_n)\bigr)^{-1}\exp(-t/D([\omega]_n))-
\bigl(D([\omega]_{n-1})\bigr)^{-1}\exp(-t/D([\omega]_{n-1}))}
{{\rm HARM}(\Sigma([\omega]_n))}, 
\end{eqnarray}
with the convention $(D([\omega]_{-1}))^{-1}=0$.
By (\ref{F'}), we have
\begin{align*}
F_{\omega,\eta}(t)-F'_{\omega,\eta}(t)t
=\sum_{n=0}^{N(\omega,\eta)}\frac{(1+t/D([\omega]_{n-1}))\exp(-t/D([\omega]_{n-1}))-(1+t/D([\omega]_{n}))\exp(-t/D([\omega]_{n}))}
{{\rm HARM}(\Sigma([\omega]_n))}.
\end{align*}
It is easy to show that $g(x):=(1+x)\exp(-x)$ is decreasing for $x>0$.
Since 
\begin{align*}
t/D([\omega]_{n-1})<t/D([\omega]_n), 
\end{align*}
we get
\begin{align*} 
&g(t/D([\omega]_{n-1}))-g(t/D([\omega]_n))\\
&=(1+t/D([\omega]_{n-1}))\exp(-t/D([\omega]_{n-1}))-(1+t/D([\omega]_{n}))\exp(-t/D([\omega]_{n}))\geq0.
\end{align*}
This implies $F_{\omega,\eta}(t)\geq F'_{\omega,\eta}(t)t$  for $t>0$.
Note that $f(\rho)=\rho e^{-\rho t}$ is increasing for $0 \le \rho \le t^{-1}$.
 Thus for $0<t<D(\omega,\eta)$ and $0\le n\le N(\omega,\eta)$, we have
\begin{equation*}
\bigl(D([\omega]_n)\bigr)^{-1}\exp(-t/D([\omega]_n))-
\bigl(D([\omega]_{n-1})\bigr)^{-1}\exp(-t/D([\omega]_{n-1}))\geq0.
\end{equation*}
Therefore, for $0<t<D(\omega,\eta)$ we get
\begin{align*}
F'_{\omega,\eta}(t)\geq \frac{D(o)\exp(-t/D(o))}{{\rm HARM}(\Sigma)}\geq D(o)\exp(-\tilde{D}/D(o)),
\end{align*}
where $\tilde{D}:={\rm diam}(\Sigma,D)$.
Hence, we have
\begin{align*}
p_t(\omega,\eta)\geq D(o)\exp(-\tilde{D}/D(o))t\ \ \ {\rm for}\ 0<t<D(\omega,\eta).
\end{align*}
 This estimate implies that for any $\varepsilon>0$ and $0<t<\varepsilon$, we have
 \begin{align*}
 E_{\omega}[D(\omega,X_t)^{\gamma}]&\geq
 \int_{\{\eta:D(\omega,\eta)>\varepsilon\}}p_t(\omega,\eta)D(\omega,\eta)^{\gamma}d{\rm HARM}(\eta)\\
 &\geq D(o)\exp(-\tilde{D}/D(o))t\cdot \varepsilon^{\gamma}\cdot
 {\rm HARM}(\Sigma\setminus B_D(\omega,\varepsilon)).
 \end{align*}
 This implies (\ref{leq1}).
Now, the proof will be finished once we prove the following. 
\begin{equation}\label{BBB}
\limsup_{t \to 0} \frac{\log{E_{\omega}[D(\omega,X_t)^{\gamma}]}}{\log t} \leq \gamma \ \ {\rm for\ all}\ \omega\in \Sigma_{*}.
\end{equation}
In order to prove $(\ref{BBB})$, we first show the following near diagonal lower bound for the heat kernel.
 For $\varepsilon>0$ sufficiently small and for all $\omega \in \Sigma_{*}$, there exist
 $C_1=C_1(\omega,\varepsilon)>0$, $t_1=t_1(\omega,\varepsilon)\in(0,t_0)$, $\delta '=\delta '(\omega,\varepsilon)>0$
 $\delta ''=\delta ''(\omega,\varepsilon)>0$ with $\delta''>\delta'$ and
 $\delta',\delta'' \rightarrow 0$ as $\varepsilon \rightarrow 0$ such that 
 \begin{equation}\label{BBBB}
 p_t(\omega,\eta)\geq \frac{C_1}{t^{\kappa_{\lambda}-\varepsilon}},
\ {\rm for\ all}\ 0<t\le t_1\ \ {\rm and\ all}\ \eta \in B_D(\omega,t^{1+\delta '})\setminus B_D(\omega,t^{1+\delta ''}). 
\end{equation}
Recall that $\kappa_{\lambda}
=\frac{\beta_{\lambda}}{\beta_{\lambda}-\log\lambda}$.
For $\omega,\eta \in \Sigma$, and $N(\omega,\eta)$ in Definition \ref{def}, we have
\begin{align*}
p_t(\omega,\eta)&=\sum_{n=0}^{N(\omega,\eta)}
 \frac{\exp(-t/D([\omega]_{n-1}))-\exp(-t/D([\omega]_n)}{{\rm HARM}(\Sigma([\omega]_n))}\\
&=p_t(\omega,\omega)-\sum_{n=N(\omega,\eta)+1}^{\infty}
\frac{\exp(-t/D([\omega]_{n-1}))-\exp(-t/D([\omega]_n)}{{\rm HARM}(\Sigma([\omega]_n))}\\
&\geq \frac{C}{{\rm HARM}(B_D(\omega,t))}-\sum_{n=N(\omega,\eta)+1}^{\infty}
 \frac{\exp(-t/D([\omega]_{n-1}))}{{\rm HARM}(\Sigma([\omega]_n))}.
\end{align*}
If $N=N(\omega,\eta)\geq L(\omega,\varepsilon)\vee K(\omega,\varepsilon)$,
 where $L(\omega,\varepsilon)$ and $K(\omega,\varepsilon)$
 are given in the proof of Theorem \ref{OND}, then
\begin{eqnarray*}
&&\sum_{n=N+1}^{\infty} \frac{\exp(-t/D([\omega]_{n-1}))}{{\rm HARM}(\Sigma([\omega]_n))} 
\le C\sum_{n=N+1}^{\infty}\exp\{(\beta_{\lambda}+\varepsilon)\}
\exp[-C't\cdot \exp\{-(\log\lambda+\varepsilon)n\} \cdot\exp\{(\beta_{\lambda}-\varepsilon)n\}]  \\
&\le& C\sum_{n=N+1}^{\infty}\exp\{(\beta_{\lambda}+\varepsilon)n\}
\exp[-C't\exp\{(\beta_{\lambda}-\log\lambda-2\varepsilon) n\}] 
= C\sum_{n=N+1}^{\infty}\alpha^{\delta n}\exp(-C't\alpha^n)\\
&\le& C\int_{\alpha^{N}}^{\infty}x^P\exp(-C'tx)dx,
\end{eqnarray*}
where\ $\alpha=\exp(\beta_{\lambda}-\log\lambda-2\varepsilon)$,
 $\delta=\frac{\beta_{\lambda}+\varepsilon}{\beta_{\lambda}-\log\lambda-2\varepsilon}$
 and $P:=[\delta]+1$. It is easy to check that
 \begin{align*}
\int x^P\exp(-C'tx)dx=\exp(-C'tx)\sum_{i=0}^P(-1)^{P-i}\frac{P!}{i!(-C't)^{P-i+1}}x^i.
\end{align*}
Therefore, we have
\begin{align*}
\sum_{n=N+1}^{\infty} \frac{\exp(-t/D([\omega]_{n-1}))}{{\rm HARM}(\Sigma([\omega]_n))}
\leq C''t^{-P-1}\exp(-C't\alpha^{N})\sum_{i=0}^P|t\alpha^N|^i.
\end{align*}
 When $t^{1+\theta}\le D(\omega,\eta)\le t^{1+\theta'}$ for $0<\theta'<\theta$,
 for $N=N(\omega,\eta)\geq L(\omega,\varepsilon)\vee K(\omega,\varepsilon)$, we have
 $t^{-\theta'}\leq \alpha^Nt\leq t^{-\frac{1+\theta}{\Delta}+1},$
 where $\Delta=\frac{\beta_{\lambda}-\log\lambda+2\varepsilon}{\beta_{\lambda}-\log\lambda-2\varepsilon}$.
 Hence, when $t^{1+\theta}\le D(\omega,\eta)\le t^{1+\theta'}$ for $0<\theta'<\theta$, and $t>0$ sufficiently small,
 \begin{align*}
 \sum_{n=N+1}^{\infty} \frac{\exp(-t/D([\omega]_{n-1}))}{{\rm HARM}(\Sigma([\omega]_n))}
 \leq C''\exp(-C't^{-\theta'})t^{-P-1}\sum_{i=0}^P(t^{-\frac{1+\theta}{\Delta}+1})^i
 \leq C''\exp(-C't^{-\theta'})t^{-\frac{P(1+\theta)}{\Delta}-1}.
 \end{align*}
Since $x^k\exp(-C'x)\leq\frac{k^k\exp(-k)}{(C')^k}$ for any $x>0$ and $k\in\mathbb{N}$,
 we have completed the proof of (\ref{BBBB}).
 Next, define $B:= \{\eta \in \Sigma:t^{1+d} \le D(\omega,\eta)\le t^{1+d'} \}$,
 where $\delta''>d>d'>\delta'>0$. Then according to (\ref{BBBB}), we have
\begin{align*}E_{\omega}[D(\omega,X_t)^{\gamma}]&\geq \int_B p_t(\omega,\eta)D(\omega,\eta)^{\gamma}
d{\rm HARM}(\eta)
\geq \frac{1}{t^{\kappa_{\lambda}-\varepsilon}}\cdot t^{(1+d)\gamma}\cdot
 {\rm HARM}(B)\\
 &\geq t^{\gamma -\kappa_{\lambda}+\gamma d+\varepsilon}
 \{(t^{1+d'})^{\kappa_{\lambda}+\varepsilon}-(t^{1+d})^{\kappa_{\lambda}-\varepsilon}\}. 
\end{align*}
By taking $\varepsilon,\ d$\ and\ $d'$ sufficiently small such that
 $d\kappa_{\lambda}-d\varepsilon>d'\kappa_{\lambda}+d'\varepsilon+2\varepsilon$,
 we get the desired result.
\end{proof}
As a corollary of Proposition \ref{Dis}, we can easily show Theorem \ref{displacement}.\\

{\it Proof of Theorem \ref{displacement}}
\ \ For\ $\omega \in\Sigma_{*},\eta \in \Sigma$,\ $d(\omega,\eta)=\exp(-N(\omega,\eta))$,
 and for\ $N(\omega,\eta)\geq L(\omega,\varepsilon)\vee K(\omega,\varepsilon)$,
 by $(\ref{BB})$ in the proof of Theorem \ref{OND}, we have
\begin{equation*}
\exp\{-(\beta_{\lambda}-\log\lambda+2\varepsilon)N(\omega,\eta)\}
 \le D(\omega,\eta)=D_N \le \exp\{-(\beta_{\lambda}-\log\lambda-2\varepsilon)N(\omega,\eta)\}.
\end{equation*}
Hence for\ $d(\omega,\eta)\le \exp\{-L(\omega,\eta)\vee K(\omega,\eta)\}$, we have
\begin{equation}
d(\omega,\eta)^{\beta_{\lambda}-\log\lambda+2\varepsilon} \le D(\omega,\eta) \le d(\omega,\eta)^{\beta_{\lambda}-\log\lambda-2\varepsilon}.
\end{equation}
This implies that there\ exists\ $C_2=C_2(\omega,\varepsilon)>0$\ \ such\ that
\begin{equation*}
C_2^{-1}D(\omega,\eta)^{\frac{1}{\beta_{\lambda}-\log\lambda-2\varepsilon}}\le d(\omega,\eta) \le C_2D(\omega,\eta)^{\frac{1}{\beta_{\lambda}-\log\lambda+2\varepsilon}}. 
\end{equation*}
This estimate and Proposition \ref{Dis} yield the claim.\qed

\end{document}